\newcommand{\leqnomode}{\tagsleft@true}
\newcommand{\reqnomode}{\tagsleft@false}
\newtheorem{theorem}{Theorem}[section]
\newtheorem{definition}[theorem]{Definition}
\newtheorem{lemma}[theorem]{Lemma}
\newtheorem{remark}[theorem]{Remark}
\newenvironment{proof}[1][Proof]{\textbf{#1.} }{\hfill\rule{0.5em}{0.5em}}
{\catcode`\@=11\global\let\AddToReset=\@addtoreset
\AddToReset{equation}{section}

\AddToReset{theorem}{section}

\title{Lorentz improving estimates for the $p$-Laplace equations with mixed data}
\author{Thanh-Nhan Nguyen\thanks{Department of Mathematics, Ho Chi Minh City University of Education, Ho Chi Minh City, Vietnam; \texttt{nhannt@hcmue.edu.vn}}, Minh-Phuong Tran\thanks{Applied Analysis Research Group, Faculty of Mathematics and Statistics, Ton Duc Thang University, Ho Chi Minh City, Vietnam; \texttt{tranminhphuong@tdtu.edu.vn}} \footnote{Corresponding author.}}

\date{\today}

\begin{document}
 
\maketitle
\begin{abstract}
The aim of this paper is to develop the regularity theory for a weak solution to a class of quasilinear nonhomogeneous elliptic equations, whose prototype is the following mixed Dirichlet $p$-Laplace equation of type
\begin{align*}
\begin{cases}
\mathrm{div}(|\nabla u|^{p-2}\nabla u) &= f+ \ \mathrm{div}(|\mathbf{F}|^{p-2}\mathbf{F}) \qquad  \text{in} \ \ \Omega, \\
\hspace{1.2cm} u &=\ g \hspace{3.1cm} \text{on} \ \ \partial \Omega,
\end{cases}
\end{align*}
in Lorentz space, with given data $\mathbf{F} \in L^p(\Omega;\mathbb{R}^n)$, $f \in L^{\frac{p}{p-1}}(\Omega)$, $g \in W^{1,p}(\Omega)$ for $p>1$ and $\Omega \subset \mathbb{R}^n$ ($n \ge 2$) satisfying a Reifenberg flat domain condition or a $p$-capacity uniform thickness condition, which are considered in several recent papers. To better specify our result, the proofs of regularity estimates involve fractional maximal operators and valid for a more general class of quasilinear nonhomogeneous elliptic equations with mixed data. This paper not only deals with the Lorentz estimates for a class of more general problems with mixed data but also improves the good-$\lambda$ approach technique proposed in our preceding works~\cite{MPT2018,PNCCM,PNJDE,PNCRM}, to achieve the global Lorentz regularity estimates for gradient of weak solutions in terms of fractional maximal operators.

\medskip

\medskip

\medskip

\noindent 

\medskip

\noindent Keywords: $p$-Laplace; mixed data; Dirichlet boundary data; regularity; fractional maximal functions; Lorentz spaces.

\end{abstract}   
                  
\section{Introduction}\label{sec:intro}

The main outcome of this paper is to establish some global regularity estimates for solutions of the $p$-Laplacian equations with mixed data of the type
\begin{align}
\label{eq:pLap}
\mathrm{div}(|\nabla u|^{p-2}\nabla u) &= f+ \ \mathrm{div}(|\mathbf{F}|^{p-2}\mathbf{F}) ,
\end{align}
with general Dirichlet boundary condition $g \in W^{1,p}(\Omega)$, where $\mathbf{F} \in L^p(\Omega;\mathbb{R}^n)$ and $f \in L^{\frac{p}{p-1}}(\Omega)$ for $p>1$. Specifically, the gradient estimates of the weak solutions are obtained in Lorentz spaces $L^{q,s}(\Omega)$ in terms of \emph{fractional maximal operators}. In fact, the results shown here also hold for a larger class of  equations that are more general than one in~\eqref{eq:pLap}. We in detail consider the quasilinear elliptic problems with mixed data (under nonhomogeneous Dirichlet boundary condition) of the type
\begin{align}\label{eq:main}
\begin{cases}
\mathrm{div}(\mathbb{A}(x,\nabla u)) &= f + \ \mathrm{div}(\mathbb{B}(x,\mathbf{F})) \quad \text{in} \ \ \Omega, \\
\hspace{1.2cm} u &=\ g \hspace{3cm} \text{on} \ \ \partial \Omega.
\end{cases}
\end{align}
In other words, the $p$-Laplace equation \eqref{eq:pLap} is the prototype of a class of quasilinear elliptic equations in \eqref{eq:main}. 
Here, in our study, $\Omega \subset \mathbb{R}^n$ is an open bounded domain, $(n \ge 2)$ and given data $\mathbf{F} \in L^p(\Omega;\mathbb{R}^n)$, $f \in L^{\frac{p}{p-1}}(\Omega)$, and the problem is set up with nonhomogeneous boundary data $g \in W^{1,p}(\Omega)$ for $p>1$. Moreover, the nonlinear operators $\mathbb{A}, \mathbb{B}: \Omega \times  \mathbb{R}^n \rightarrow \mathbb{R}^n$ are Carath\'eodory vector valued functions (they are measurable on $\Omega$ for every $z$ in $\mathbb{R}^n$ with respect to $x$, and continuous on $\mathbb{R}^n$ for almost every $x$ in $\Omega$ with respect to $z$) which satisfy the following growth and monotonicity conditions: there exist $\Lambda_1$, $\Lambda_2>0$ such that two following inequalities 
\begin{align}\label{cond:A1}
\left|\mathbb{A}(x,z) \right| + \left|\mathbb{B}(x,z)\right| &\le \Lambda_1 |z|^{p-1};
\end{align}
\begin{align}\label{cond:A2}
\langle \mathbb{A}(x,z_1)-\mathbb{A}(x,z_2), z_1 - z_2 \rangle &\ge \Lambda_2 {\Phi}(z_1,z_2),
\end{align}
hold for almost every $x$ in $\Omega$ and every $z_1$, $z_2$ in $ \mathbb{R}^n \setminus \{0\}$, where the function ${\Phi}$ in~\eqref{cond:A2} is defined by
\begin{align}\label{def:G}
{\Phi}(z_1,z_2) =  \left( |z_1|^2 + |z_2|^2 \right)^{\frac{p-2}{2}}|z_1 - z_2|^2, \qquad z_1, \, z_2 \in \mathbb{R}^n \setminus \{0\}.
\end{align}

1.1. \textbf{Relation to prior works.} Before diving into our results in this paper, we start by briefly describing some related results pertaining to regularity theory also the classical Calder\'on-Zygmund theory for the nonlinear elliptic equations in recent years. As a starting point to that theory,  the study of the regularity results for classical  $p$-Laplacian equations, which is interesting in itself. And later, regularity theory for problems in the more general setting of $p$-Laplacian type equations have been further studied.  We refer to the classical results by the series of papers~\cite{Ural1968, LadyUral1968, Uhlenbeck1977, Evans1982, Tolksdorf1984, Lewis1983, DiBenedetto1983, Lieberman1984, Lieberman1986, Lieberman1988, Iwaniec}, for reliable results of these pioneering works. In these works, attention has been driven to the interior $C^{1,\alpha}$ regularity for the weak solutions to the class of $p$-Laplacian equations, and later for homogeneous quasilinear elliptic equations of the type $-\mathrm{div}(\mathbb{A}(x,u,\nabla u))=0$, or the divergence form modeled by the $p$-Laplacian equation $\mathrm{div}(|\nabla u|^{p-2}\nabla u)=\mathrm{div}(|\mathbf{F}|^{p-2}\mathbf{F})$ have been greatly developed through the years. 

Let us discuss on some regularity results of solutions to quasilinear elliptic equations formulated in the more general form, that attract interest to the researchers in years, as following
\begin{align}
\label{eq:A-RHS}
\mathrm{div}(\mathbb{A}(x,\nabla u)) = \texttt{RHS},
\end{align}
where the \texttt{RHS} (right hand side) may be given in divergence or not in divergence form, or even a measure data. For instance, when the \texttt{RHS} is given in $p$-Laplacian form: $\mathrm{div}(\mathbb{A}(x,\nabla u)) = \ \mathrm{div}(|\mathbf{F}|^{p-2}\mathbf{F})$, with zero Dirichlet boundary data, there are good results for interior and global regularity results established by seminal works of S.-S. Byun \textit{et al.} in~\cite{BW1_1, SSB2, SSB3, SSB4, SSB1}, T. Mengesha \textit{et al.} in~\cite{MP11, MP12} in the setting of classical $L^p$ spaces and its weighted version, Sobolev spaces $W^{\alpha,p}$, respectively. It is worth noting that in these works, the problem is set under assumptions on Reifenberg domain $\Omega$, together with standard ellipticity condition of $\mathbb{A}$ and small BMO oscillation in $x$. Besides, there are other approaches showing results  of this equation under various assumptions on the data, such as~\cite{Phuc2, PNJDE} for weaker hypothesis on domain $\Omega$ (its complement satisfies $p$-capacity uniformly thickness condition), obtained in Lorentz spaces. On the other hand, a plenty of interesting regularity results have been obtained for weak solutions to~\eqref{eq:A-RHS}, where \texttt{RHS} is given in divergence form $\mathrm{div}(\mathbb{A}(x,\nabla u)) = \mathrm{div} (\mathbf{F})$. For instance, the interior $W^{1,q}$ estimates of solutions was done by T. Nguyen \textit{et al.} in a remarkable paper~\cite{Truyen2016}; and later these results were recently generalized in\cite{FT2018, Truyen2018} for the global gradient estimates in weighted Morrey spaces. Moreover, in some intensive works, many authors dealt with regularity results in several spaces, which require different assumptions on the domain $\Omega$ and the nonlinearity $\mathbb{A}$ and the given boundary data (see, e.g.,~\cite{BW1, GW, Tuoc2018, Phuc2015, BCDKS, CM2014, CoMi2016, KZ, MPTNsub}) to this class of elliptic equations. 


The problem~\eqref{eq:A-RHS} where the data  \texttt{RHS} mixed between divergence and nondivergence forms $\mathrm{div}(|\mathbf{F}|^{p-2}\mathbf{F}) + f$ (with homogeneous Dirichlet boundary condition) was recently considered by M. Lee \textit{et al.} in~\cite{Lee2019}, that was motivated by some previous works by V. B\"ogelein \textit{et al.} in~\cite{BDM2011, Boge2014} resulting the global Calder\'on-Zygmund theorem in the context of inhomogeneous parabolic systems of $p$-Laplacian type. 

Inspired by these recent works above-mentioned and mathematical techniques developed for nonlinear elliptic equations, our investigation here is to establish the global regularity estimates for weak solutions to the nonhomogeneous problem~\eqref{eq:main} in Lorentz spaces, in which the use of  ``good-$\lambda$ type bounds'' technique plays a crucial role. By far, the so-called terminology ``good-$\lambda$'' is one of the most effective techniques devoted to study of regularity estimates (in Euclidean setting, see~\cite{MW}).  This approach has been studied and discussed by a great number of research papers, such as~\cite{55QH4,MPT2018,PNCCM,PNJDE} to which we refer the interested readers. In addition, our preceding works in~\cite{PNCRM,PNnonuniform} presented the generalized good-$\lambda$ technique and its applications to regularity estimates for the uniform/non-uniformly elliptic equations. Based upon the studies, it is worth emphasizing that in this paper, we additionally improve the proof techniques for ``good-$\lambda$ type bounds'' method to obtain better regularity results for interior as well as up to the boundary of domain. To be more concrete, this study elaborates another route to prove regularity estimates in Lorentz spaces. Completely avoiding the use of cut-off fractional maximal functions as in previous studies~\cite{PNJDE,MPTNsub,PNnonuniform}, we believe that that the proposed technique described in this paper can be beneficially used to obtain gradient estimates of solutions for problems~\eqref{eq:main} in general, that are preserved under the fractional  maximal operators. Fractional maximal operators were first employed by F. Duzaar {\it et al.} in \cite{Duzamin2, 55DuzaMing,KM2012, KM2014} and then are widely used in several research papers to obtain gradient estimates for solutions to elliptic/parabolic equations (see~\cite{Mi1, KM2013,Min2003, Min2007} for more interesting papers, and even~\cite{Mi2019} for an  article of history, recently written by G. Mingione {\it et al.}). This work not only extends and improves the results in our previous work~\cite{PNJDE} (for problems with divergence form data only) to the problem with mixed data but also develops results in~\cite{Lee2019} (for Calder\'on-Zygmund estimates in the setting of Lebesgue spaces) to the Lorentz spaces. 

To better specify our results, through the paper there are two types of domain hypothesis formulated separately to compare results in the framework of Lorentz spaces: \emph{domains with uniformly $p$-capacity thick complement} versus \emph{Reifenberg flat domains}, in which their central concepts and definitions will be introduced underneath. 

Before stating our main results, let us first introduce some notation and preliminary definitions that will be used in the rest of the paper.\\

1.2. \textbf{Notation and Definitions.}
\begin{itemize}
\item We use the symbol $C$ to denote positive constants that not necessarily the same at each occurrence depending only on dimension and some constants appearing in the theorems. The dependence of $C$ on some prescribed parameters will be emphasized between parentheses. Further, all constants starting by $C$, such as $C, C_i$ for example, are assumed to be larger than or equal to one.
\item The given domain $\Omega \subset \mathbb{R}^n$, $n \ge 2$ is assumed to be an open bounded domain. 
\item We let $\mathcal{L}^n(E)$ stand for the Lebesgue measure of a measurable set $E$ in $\mathbb{R}^n$. In addition, we will write $\mathrm{diam}(\Omega)$ for the diameter of $\Omega$, defined as:
\begin{align*}
\mathrm{diam}(\Omega) = \sup_{\xi_1, \xi_2 \in \Omega} |\xi_1-\xi_2|.
\end{align*}
\item We shall denote the integral average of a function $h \in L^1_{\mathrm{loc}}(\mathbb{R}^n)$ over the measurable subset  $E$ of $\mathbb{R}^n$ as
\begin{align*}
\fint_E{h(x)dx} = \frac{1}{\mathcal{L}^n(E)}\int_E{h(x)dx}.
\end{align*}
\item The open $n$-dimensional Euclidean ball in $\mathbb{R}^n$ of radius $\varrho>0$ and center $\xi$ will be denoted by $B_{\varrho}(\xi)$, that is the set $\{z \in \mathbb{R}^n: |z-\xi|<\varrho\}$. We also denote $\Omega_{\varrho}(\xi) = B_{\varrho}(\xi) \cap \Omega$ which is considered as the ``surface ball'' when the center $\xi$ lies on $\partial\Omega$.
\end{itemize}
\begin{definition}[Weak solution]\label{def:weak_sol}
A function $u \in W^{1,p}(\Omega)$ is a weak (distributional) solution to~\eqref{eq:main} under the assumptions~\eqref{cond:A1} and~\eqref{cond:A2} if 
\begin{align*}
\int_\Omega{ \langle\mathbb{A}(x,\nabla u), \nabla\varphi \rangle dx} = \int_\Omega{\langle \mathbb{B}(x,\mathbf{F}), \nabla\varphi \rangle dx} - \int_\Omega{f\varphi dx} ,
\end{align*}
holds for all $\varphi \in W_0^{1,p}(\Omega)$.
\end{definition}

\begin{definition}[The $p$-capacity]
Let $\Omega \subset \mathbb{R}^n$, $n \ge 2$ be a bounded domain. The $p$-capacity of a compact subset $\mathcal{K} \subset \Omega$ with respect to $\Omega$ is defined to be
\begin{align*}
\mathrm{cap}_p(\mathcal{K},\Omega) = \inf \left\{ \int_\Omega{|\nabla \psi|^p dx}: \ \psi \in C_c^\infty(\Omega), \ \psi \ge 1 \ \text{on} \ \mathcal{K} \right\}.
\end{align*}
If $\mathcal{O} \subseteq \Omega$ is an open set, then
\begin{align*}
\mathrm{cap}_p(\mathcal{O},\Omega) = \sup \left\{ \mathrm{cap}_p(\mathcal{K},\Omega): \ \mathcal{K} \subseteq \mathcal{O}, \  \mathcal{K} \ \text{compact} \right\},
\end{align*}
meanwhile, the $p$-capacity of arbitrary set $\mathcal{A} \subseteq \Omega$ is defined by
\begin{align*}
\mathrm{cap}_p(\mathcal{A},\Omega) = \inf \left\{ \mathrm{cap}_p(\mathcal{O},\Omega): \ \mathcal{O} \subseteq \mathcal{A}, \ \mathcal{O} \ \text{open} \right\}.
\end{align*}
\end{definition}
\begin{definition}[Domains with uniformly $p$-capacity thick complement]\label{def:pcapa}
Let $\Omega$ be an arbitrary bounded domain of $\mathbb{R}^n$ ($n \ge 2$). Then, domain $\mathbb{R}^n \setminus \Omega$ is said to satisfy the $p$-capacity uniform thickness condition if there exist two constants $c_0,r_0>0$ such that
\begin{align}\tag{$\mathcal{HP}$}
\label{hyp:P}
\mathrm{cap}_p((\mathbb{R}^n \setminus \Omega) \cap \overline{B}_{\varrho}(\xi), B_{2\varrho}(\xi)) \ge c_0 \mathrm{cap}_p(\overline{B}_{\varrho}(\xi),B_{2\varrho}(\xi)),
\end{align}
for every $\xi \in \mathbb{R}^n \setminus \Omega$ and $0<\varrho \le r_0$. 
\end{definition}
\begin{remark}
Every nonempty $\mathbb{R}^n \setminus \Omega$ is uniformly $p$-thick for $p > n$, and therefore the uniform capacity density condition is nontrivial only when $p \le n$. Moreover, if $\mathbb{R}^n \setminus \Omega$ is uniformly $p$-thick, then it is uniformly $q$-thick for all $q \ge p$.
\end{remark}
\begin{remark}
We note that the $p$-capacity uniform thickness implies that all points on $\partial\Omega$ is regular. According to~\cite{Kilp, HKM2006}, a sufficient condition for $\xi \in \partial\Omega$ being regular is
\begin{align*}
\int_0^1{\left(\frac{\mathrm{cap}_p((\mathbb{R}^n \setminus \Omega) \cap \overline{B}_{\varrho}(\xi), B_{2\varrho}(\xi))}{\mathrm{cap}_p(\overline{B}_{\varrho}(\xi), B_{2\varrho}(\xi))} \right)^{\frac{1}{p-1}} \frac{d\varrho}{\varrho}} = \infty,
\end{align*}
for the $p$-Laplace equation, where the thickness of complement of $\Omega$ near boundary $\partial \Omega$ can be measured by capacity densities. This condition is called the Wiener criterion, first introduced by N. Wiener in~\cite{Wiener}, that is important in regularity of boundary points.
\end{remark}
\begin{remark}
Such assumption~\eqref{hyp:P} is very mild and essential for higher integrability results. Domains whose complement satisfy $p$-capacity uniform thickness include domains with Lipschitz continuous boundaries or satisfy a uniform exterior corkscrew condition. 
\end{remark}

\begin{definition}[$(\delta,r_0)$-Reifenberg flat domain]\label{def:Reifenberg}
Let $\Omega \subset \mathbb{R}^n$, $n \ge 2$ be a bounded domain and let $\delta \in (0,1)$ and $r_0>0$. Then, we say that $\Omega$ is a $(\delta,r_0)$-Reifenberg flat domain if for each $\xi_0 \in \partial \Omega$ and each $\varrho \in (0,r_0]$, one can find a coordinate system $\{y_1,y_2,...,y_n\}$ with origin at $\xi_0$ such that
\begin{align}\tag{$\mathcal{HR}$}
\label{hyp:R}
B_{\varrho}(\xi_0) \cap \{y: \ y_n > \delta \varrho\} \subset B_{\varrho}(\xi_0) \cap \Omega \subset B_{\varrho}(\xi_0) \cap \{y: \ y_n > -\delta \varrho\},
\end{align}
where for the sake of convenience, the set $\{y = (y_1, y_2, ..., y_n): \ y_n > c\}$ is denoted by $\{y: \ y_n > c\}$.
\end{definition}
\begin{remark}
For a suitable regularity parameter $\delta$, any Lipschitz domain with small Lipschitz constant is a Reifenberg flat domain (value of $\delta$ depends on the Lipschitz constant) and even some domains with fractal boundaries, see~\cite{Toro1997}. 
\end{remark}

\begin{remark}\label{rem:2types}
There are two types of hypothesis on domain $\Omega$ separately considered in this paper: domains under Definition~\ref{def:pcapa} and~\ref{def:Reifenberg}, respectively. It is worth noting that hypothesis~\eqref{hyp:P} is weaker than one in~\eqref{hyp:R}. Geometrically, the Reifenberg domains are domains which are flat in the sense that their boundaries are well-approximated by planes/hyperplanes. This class of domains include all $C^1$ domains, domains with small Lipschitz constants, domains with fractal boundaries, etc. Reifenberg flatness domains may be very rough but assumed to be sufficiently flat, we can say that this concept is a ``minimal regularity requirement" for the boundary $\partial\Omega$ ensuring the main results of the geometric analysis continue to hold true in $\Omega$. Elliptic and parabolic equations in Reifenberg flat domains were exploited by several authors in~\cite{BW2,SSB4, MT2010, LM1} and many references therein.
\end{remark}

\begin{definition}[$(\delta,r_0)$-BMO condition]\label{def:BMOcond}
The nonlinearity $\mathbb{A}$ is said to satisfy a $(\delta,r_0)$-BMO condition with exponent $t>0$ if the following condition holds
\begin{align}\label{cond:BMO}
[\mathbb{A}]_t^{r_0} = \sup_{y \in \mathbb{R}^n, \ 0<\varrho\le r_0} \left(\fint_{B_{\varrho}(y)} \left(\sup_{z \in \mathbb{R}^n \setminus \{0\}} \frac{|\mathbb{A}(x,z) - \overline{\mathbb{A}}_{B_{\varrho}(y)}(z)|}{|z|^{p-1}}\right)^t dx\right)^{\frac{1}{t}} \le \delta,
\end{align}
where $\overline{\mathbb{A}}_{B_{\varrho}(y)}(z)$ denotes the average of $\mathbb{A}(\cdot,z)$ over the ball $B_{\varrho}(y)$.
\end{definition}

\begin{definition}[Lorentz spaces]\label{def:Lorentz}
Lorentz space $L^{q,s}(\Omega)$ for $0<q<\infty$ and $0 <s< \infty$ is defined by that for all Lebesgue measurable function $h$ on $\Omega$, there holds
\begin{align*}
\|h\|_{L^{q,s}(\Omega)} := \left[ q \int_0^\infty{ \tau^s\mathcal{L}^n \left( \{\zeta \in \Omega: |h(\zeta)|>\tau\} \right)^{\frac{s}{q}} \frac{d\tau}{\tau}} \right]^{\frac{1}{s}} < \infty.
\end{align*}
Otherwise, when $s=\infty$, the space $L^{q,\infty}(\Omega)$ is  Marcinkiewicz spaces with the following quasi-norm
\begin{align*}
\|h\|_{L^{q,\infty}(\Omega)} := \sup_{\tau>0}{\tau \left[\mathcal{L}^n\left(\{\zeta \in \Omega:|h(\zeta)|>\tau\}\right)\right]^{\frac{1}{q}}}.
\end{align*}
\end{definition}
\begin{remark}
When $s=t$ the Lorentz space $L^{s,s}(\Omega)$ coincides with usual Lebesgue space $L^s(\Omega)$. In particular, we know that for some $0<r \le s \le t \le \infty$, there holds $L^t(\Omega)  \subset L^{s,r}(\Omega) \subset  L^{s}(\Omega) \subset L^{s,t}(\Omega) \subset L^r(\Omega)$. One also finds in~\cite{Gra97} for some further properties regarding these spaces.
\end{remark}
  
\begin{definition}[Fractional maximal function]\label{def:Malpha}
For $\alpha \in [0, n]$, the fractional maximal operator $\mathbf{M}_{\alpha}$ of a function $h \in L^1_{\mathrm{loc}}(\mathbb{R}^n)$ is given by:
\begin{align} \nonumber 
\mathbf{M}_\alpha h(x) = \sup_{\varrho>0}{{\varrho}^{\alpha} \fint_{B_{\varrho}(x)}{|h(y)|dy}}, \quad x \in \mathbb{R}^n.
\end{align}
In a special case when $\alpha=0$, it coincides with the well-known Hardy-Littlewood maximal function $\mathbf{M}$ defined by:
\begin{align}\nonumber 
\mathbf{M}h(x) = \sup_{\varrho >0}{\fint_{B_{\varrho}(x)}|h(y)|dy}, \quad x \in \mathbb{R}^n,
\end{align}
for a given locally integrable function $h$ in $\mathbb{R}^n$.
\end{definition}
In this parer, the boundedness property of fractional maximal operators play a crucial role for gradient estimates of the distribution solution to our problem.  The following lemma will recall this useful property of fractional maximal operators. A detail proof of this lemma can be found in our previous work~\cite{PNnonuniform}. Moreover, we remark that when $\alpha = 0$ this property is exactly the boundedness property of Hardy-Littlewood maximal function in~\cite{55Gra}.
\begin{lemma}[Boundedness of fractional maximal function $\mathbf{M}_{\alpha}$, see~\cite{PNnonuniform}]\label{lem:M_alpha}
The operator $\mathbf{M}_{\alpha}$ is bounded from $L^s(\mathbb{R}^n)$ to $L^{\frac{ns}{n-\alpha s},\infty}(\mathbb{R}^n)$, for $s \ge 1$ and $\alpha \in \left[0,\frac{n}{s}\right)$. This means there is a positive constant $C=C(n,s,\alpha)$ such that 
\begin{align*}
\mathcal{L}^n\left(\left\{\zeta \in \mathbb{R}^n: \ \mathbf{M}_{\alpha}h(\zeta)> \tau\right\}\right) \le C \left(\frac{1}{\tau^{s}}\int_{\mathbb{R}^n}|h(y)|^s dy\right)^{\frac{n}{n-\alpha s}},
\end{align*}
for all $h \in L^s(\mathbb{R}^n)$ and $\tau >0$.
\end{lemma}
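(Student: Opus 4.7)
The plan is to reduce the bound for the fractional maximal function to the standard weak-type estimate for the Hardy–Littlewood maximal function via a pointwise interpolation inequality. Concretely, I would first establish the pointwise control
\begin{align*}
\mathbf{M}_{\alpha} h(x) \le C \|h\|_{L^s(\mathbb{R}^n)}^{\alpha s/n}\,\bigl(\mathbf{M} h(x)\bigr)^{1-\alpha s/n},
\end{align*}
for all $x \in \mathbb{R}^n$, and then combine it with the weak-type $(s,s)$ bound for $\mathbf{M}$, namely $\mathcal{L}^n(\{\mathbf{M}h>\lambda\}) \le C \lambda^{-s}\|h\|_{L^s}^s$, which is available from the classical Hardy–Littlewood theory recalled in~\cite{55Gra}.

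The first step is to split the supremum defining $\mathbf{M}_{\alpha} h(x)$ at a radius $R$ to be chosen later. For small radii $\varrho \le R$, we estimate
\begin{align*}
\varrho^{\alpha} \fint_{B_{\varrho}(x)} |h(y)|\,dy \le R^{\alpha}\,\mathbf{M}h(x),
\end{align*}
while for $\varrho > R$ we apply Hölder's inequality in $L^s$ to obtain
\begin{align*}
\varrho^{\alpha} \fint_{B_{\varrho}(x)} |h(y)|\,dy \le C \varrho^{\alpha - n/s} \|h\|_{L^s(\mathbb{R}^n)} \le C R^{\alpha - n/s}\|h\|_{L^s(\mathbb{R}^n)},
\end{align*}
using that $\alpha - n/s < 0$. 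Balancing the two terms by picking $R = \bigl(\|h\|_{L^s}/\mathbf{M}h(x)\bigr)^{s/n}$ yields the displayed interpolation inequality. This step is essentially a routine computation; the only thing to watch is the degenerate case $\mathbf{M}h(x)=0$, which forces $h\equiv 0$ a.e.\ and hence $\mathbf{M}_{\alpha}h(x)=0$ as well.

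The second step is direct substitution. From the pointwise inequality, $\mathbf{M}_{\alpha}h(x)>\tau$ implies
\begin{align*}
\mathbf{M}h(x) > \left(\frac{\tau}{C\,\|h\|_{L^s}^{\alpha s/n}}\right)^{\!\!n/(n-\alpha s)},
\end{align*}
so the weak-type $(s,s)$ estimate for $\mathbf{M}$ gives
\begin{align*}
\mathcal{L}^n\bigl(\{\mathbf{M}_{\alpha}h>\tau\}\bigr) \le C\,\tau^{-ns/(n-\alpha s)}\,\|h\|_{L^s}^{\alpha s^2/(n-\alpha s)}\,\|h\|_{L^s}^s.
\end{align*}
Collecting the exponents of $\|h\|_{L^s}$, one finds $\alpha s^2/(n-\alpha s) + s = ns/(n-\alpha s)$, which rearranges exactly to the claimed bound with $q = ns/(n-\alpha s)$.

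I do not anticipate a genuine obstacle: both the pointwise interpolation and the Hardy–Littlewood weak-type inequality are standard, and the arithmetic of the exponents matches the target space $L^{ns/(n-\alpha s),\infty}$ by design. The one small care point is verifying that the argument is uniform down to $s=1$ (where only the weak, not strong, $(1,1)$ bound for $\mathbf{M}$ is available), but since only the weak-type bound is used, the proof goes through for the full range $s \ge 1$ and $\alpha \in [0, n/s)$ as stated.
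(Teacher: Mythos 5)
Your argument is correct, and the exponent bookkeeping checks out: splitting the supremum at $R$, the two branches balance at $R=\bigl(\|h\|_{L^s}/\mathbf{M}h(x)\bigr)^{s/n}$ to give $\mathbf{M}_\alpha h \le C\|h\|_{L^s}^{\alpha s/n}(\mathbf{M}h)^{1-\alpha s/n}$, and inserting this into the weak-type $(s,s)$ bound for $\mathbf{M}$ produces exactly $\tau^{-\frac{ns}{n-\alpha s}}\|h\|_{L^s}^{\frac{ns}{n-\alpha s}}$, i.e.\ the stated estimate. Note that the paper itself does not prove this lemma; it only cites the authors' earlier work~\cite{PNnonuniform}, so there is no internal proof to compare against --- your pointwise-interpolation route is the standard self-contained derivation and is perfectly adequate. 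The only point worth making explicit is the weak-type $(s,s)$ inequality you invoke for $\mathbf{M}$ on the whole range $s\ge 1$: for $s>1$ it follows from the strong $(s,s)$ bound, while uniformly down to $s=1$ one can simply use Jensen's inequality $(\mathbf{M}h)^s\le \mathbf{M}(|h|^s)$ together with the weak $(1,1)$ bound, which gives $\mathcal{L}^n(\{\mathbf{M}h>\lambda\})\le C\lambda^{-s}\int_{\mathbb{R}^n}|h|^s\,dy$ with $C=C(n)$; with that remark your proof is complete, including the degenerate cases $\mathbf{M}h(x)\in\{0,\infty\}$ and $\alpha=0$.
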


1.3. \textbf{Statement of the Main Results.} From now on, we always consider $\Omega$ as an open bounded domain of $\mathbb{R}^n$ and $u \in W^{1,p}(\Omega)$ as a weak solution to equation~\eqref{eq:main} with given data $\mathbf{F} \in L^p(\Omega)$, $f \in L^{\frac{p}{p-1}}(\Omega)$ and Dirichlet boundary condition $g \in W^{1,p}(\Omega)$ for $p \in (1,n]$. For simplicity of notation, let us denote by $|\mathcal{E}|^p = |\mathbf{F}|^p + |f|^{\frac{p}{p-1}} + |\nabla g|^p$ in the whole paper. Moreover, the operator $\mathbb{A}$ is assumed satisfying assumptions~\eqref{cond:A1} and~\eqref{cond:A2} with suitable constants $\Lambda_1, \Lambda_2$. Main results of the present paper can be stated as follows. 
\begin{theorem}[Good-$\lambda$ theorem under assumption~\eqref{hyp:P}]
\label{theo:good-lambda} 
Assume that domain $\Omega \subset \mathbb{R}^n$ satisfies the $p$-capacity thick complement condition~\eqref{hyp:P} with two constants $c_0$, $r_0 \in \mathbb{R}^+$. Then one can find two constants $\gamma = \gamma(n,p,\Lambda_1,\Lambda_2) >1$ and $\theta  =\theta (p) \ge 2$ such that for every $0 \le \alpha < \frac{n}{\gamma}$ the following estimate
\begin{multline}\label{eq:good-lambda}
\mathcal{L}^n\left(\left\{\zeta \in \Omega: \ {\mathbf{M}}_{\alpha}(|\nabla u|^p)(\zeta) >\varepsilon^{\frac{\alpha}{n}-\frac{1}{\gamma}} \lambda, \ {\mathbf{M}}_{\alpha}(|\mathcal{E}|^p)(\zeta) \le \varepsilon^{\theta \left(1-\frac{1}{\gamma}\right)} \lambda \right\} \right) \\
 \hspace{4cm} \le C \varepsilon \mathcal{L}^n\left(\left\{\zeta \in \Omega: \ {\mathbf{M}}_{\alpha} (|\nabla u|^p)(\zeta) > \lambda\right\}\right),
\end{multline}
holds for all $\lambda \in \mathbb{R}^+$ and $\varepsilon \in (0,\varepsilon_0)$, where $\varepsilon_0 = \varepsilon_0(n,p,\alpha,\Lambda_1,\Lambda_2) \in (0,1)$ and  $C = C(n, p, \Lambda_1, \Lambda_2,  \alpha, c_0, r_0, \mathrm{diam}(\Omega))>0$.
\end{theorem}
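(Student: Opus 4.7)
The plan is to establish the good-$\lambda$ inequality via a Vitali/Calder\'on--Zygmund stopping-time decomposition of the super-level set $E_\lambda := \{\zeta \in \Omega : \mathbf{M}_\alpha(|\nabla u|^p)(\zeta) > \lambda\}$, coupled with a localized density estimate whose novelty (flagged in the introduction) lies in handling $\mathbf{M}_\alpha$ directly rather than a truncated variant. After discarding the trivial case $E_\lambda=\emptyset$, each $\zeta$ in the left-hand side of \eqref{eq:good-lambda} gives, by definition of $\mathbf{M}_\alpha$, a stopping radius $r_\zeta>0$ with
\begin{equation*}
\lambda < r_\zeta^{\alpha-n}\int_{B_{r_\zeta}(\zeta)\cap\Omega}|\nabla u|^p\,dy.
\end{equation*}
A uniform upper bound on $r_\zeta$ (depending on $\mathrm{diam}(\Omega)$ and $\|\nabla u\|_{L^p(\Omega)}$) together with the Vitali $5r$-covering lemma yields a countable disjoint family $\{B_{r_i}(\zeta_i)\}$ whose concentric enlargements cover $E_\lambda$ and satisfy this stopping-time estimate.

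The core step is the local density estimate
\begin{equation*}
\mathcal{L}^n\!\bigl(\{\mathbf{M}_\alpha(|\nabla u|^p) > \varepsilon^{\alpha/n-1/\gamma}\lambda,\ \mathbf{M}_\alpha(|\mathcal{E}|^p) \le \varepsilon^{\theta(1-1/\gamma)}\lambda\}\cap B_{5r_i}(\zeta_i)\bigr) \le C\varepsilon\,\mathcal{L}^n(B_{r_i}(\zeta_i)),
\end{equation*}
from which \eqref{eq:good-lambda} follows by summation over $i$ and disjointness. To obtain it I would compare $u$, on a suitably enlarged ball $B_{K r_i}(\zeta_i)\cap\Omega$, to a reference function $v$ solving the homogeneous problem $\mathrm{div}(\mathbb{A}(x,\nabla v))=0$ with the same Dirichlet data as $u$ on the boundary of that concentric ball. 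The monotonicity assumption \eqref{cond:A2} through the auxiliary function $\Phi$ of \eqref{def:G}, combined with Young's inequality in the weak formulation of Definition~\ref{def:weak_sol}, delivers a quantitative comparison bounding $\int|\nabla u - \nabla v|^p$ by the datum contribution $|\mathbf{F}|^p+|f|^{p/(p-1)}+|\nabla g|^p = |\mathcal{E}|^p$. In parallel, a Gehring-type reverse H\"older inequality supplies a higher integrability exponent $\gamma p > p$ for $\nabla v$; this is the origin of the constant $\gamma$. The hypothesis \eqref{hyp:P} is invoked precisely to guarantee such a reverse H\"older estimate up to the boundary for those selected balls meeting $\partial\Omega$, while for interior balls the standard Gehring lemma applies.

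Feeding these two comparisons into the stopping-time inequality for $B_{r_i}(\zeta_i)$, together with Chebyshev's inequality and the weak-type bound of Lemma~\ref{lem:M_alpha} at exponent $\gamma p$, yields the density estimate: the threshold $\varepsilon^{\theta(1-1/\gamma)}$ (with $\theta$ chosen only in terms of $p$) is calibrated to absorb the scaling constants arising from the energy comparison, while the factor $\varepsilon^{\alpha/n-1/\gamma}$ on the side of $\mathbf{M}_\alpha(|\nabla u|^p)$ compensates for the geometric weight $r_i^\alpha$ generated when translating the $L^{\gamma p}$ bound on $\nabla v$ into a pointwise bound for $\mathbf{M}_\alpha(|\nabla v|^p)$ at scale $r_i$. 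I expect the hardest part to be this precise bookkeeping of exponents: one must show that contributions to $\mathbf{M}_\alpha(|\nabla u|^p)(\zeta)$ coming from balls of radius much larger than $r_i$ are automatically $\le \lambda$ (via a parent-ball stopping-time argument), so that localization to scales comparable to $r_i$ is justified without introducing a cut-off maximal function as in \cite{PNJDE,PNnonuniform}. Once the density estimate holds uniformly over interior and boundary balls, summing over $i$ gives \eqref{eq:good-lambda} with a constant $C$ absorbing $c_0,r_0,\mathrm{diam}(\Omega)$ through the boundary reverse H\"older inequality.
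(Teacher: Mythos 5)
Your PDE machinery is the same as the paper's: comparison with the homogeneous solution and the choice of $\theta$ (Lemma~\ref{lem:u-w}), the reverse H\"older exponent $\gamma$, the weak-type bounds of Lemma~\ref{lem:M_alpha} applied at exponents $1$ and $\gamma$, the tail/truncation argument (Lemma~\ref{lem:A2}), and the calibration of the two thresholds $\varepsilon^{\frac{\alpha}{n}-\frac{1}{\gamma}}$ and $\varepsilon^{\theta(1-\frac{1}{\gamma})}$, which is exactly the balancing done in the paper's Lemma~\ref{lem:A3}. Where you genuinely diverge, and where the gaps are, is the covering step: you replace the density-based Covering Lemma~\ref{lem:mainlem} by a direct Vitali stopping-time on the averages of $|\nabla u|^p$, and this leaves two holes. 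First, after summing the local density estimates you must bound $\sum_i\mathcal{L}^n(B_{r_i}(\zeta_i))$ by $\mathcal{L}^n(\{\zeta\in\Omega:\ \mathbf{M}_{\alpha}(|\nabla u|^p)(\zeta)>\lambda\})$. The stopping condition only places the selected balls inside $\{\mathbf{M}_{\alpha}(|\nabla u|^p)>2^{\alpha-n}\lambda\}$ (a level mismatch you can absorb), but since the right-hand set is a subset of $\Omega$ you also need $\mathcal{L}^n(B_{r_i}(\zeta_i)\cap\Omega)\ge c\,\mathcal{L}^n(B_{r_i}(\zeta_i))$ with $c$ independent of $\varepsilon$ for balls crossing $\partial\Omega$. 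This geometric input is precisely why the paper's Covering Lemma carries the hypotheses \eqref{hyp:P}/\eqref{hyp:R}; in your plan the role of \eqref{hyp:P} is attributed solely to the boundary reverse H\"older estimate, so nothing in your argument supplies this density, and the summation step does not close.

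Second, your stopping radii are bounded only in terms of $\mathrm{diam}(\Omega)$ and $\|\nabla u\|_{L^p}$, so for small $\lambda$ they exceed $r_0$; but the comparison estimates and the boundary reverse H\"older inequality of Lemma~\ref{lem:u-w} are available only at scales $2\varrho\le r_0$, where \eqref{hyp:P} is assumed. The paper sidesteps this because the covering lemma stops on the density of the set $\mathcal{V}_{\varepsilon,\lambda}$ itself, so all radii are automatically $\le R\le r_0/20$; the price is the smallness hypothesis $\mathcal{L}^n(\mathcal{V}_{\varepsilon,\lambda})\le\varepsilon\,\mathcal{L}^n(B_R(0))$, which is verified by Lemma~\ref{lem:A1} using the global energy bound of Lemma~\ref{lem:global} together with Lemma~\ref{lem:M_alpha} and the presence of a point with $\mathbf{M}_{\alpha}(|\mathcal{E}|^p)\le\varepsilon^{\theta(1-\frac{1}{\gamma})}\lambda$. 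Your proposal contains no analogue of this step, so the small-$\lambda$/large-radius regime is untreated. Both gaps are exactly what Lemmas~\ref{lem:mainlem} and~\ref{lem:A1} provide in the paper; unless you import them (or re-prove the measure density and the large-scale reduction yourself), the proof as outlined does not go through.
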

\begin{theorem}[Global Lorentz regularity estimate under assumption~\eqref{hyp:P}]
\label{theo:main}
Assume that domain $\Omega \subset \mathbb{R}^n$ satisfies the $p$-capacity thick complement  condition~\eqref{hyp:P} with two constants $c_0$, $r_0 \in \mathbb{R}^+$. Then one can find $\gamma = \gamma(n,p,\Lambda_1,\Lambda_2) >1$ such that for every $0 \le \alpha < \frac{n}{\gamma}$, $0<q<\frac{n \gamma}{n-\alpha \gamma}$ and $0<s \le \infty$, there holds
\begin{align}\label{eq:regularityMalpha}
\|{\mathbf{M}}_\alpha(|\nabla u|^p)\|_{L^{q,s}(\Omega)} \leq C \|\mathbf{M}_\alpha(|\mathcal{E}|^p)\|_{L^{q,s}(\Omega)},
\end{align}
where $C$ depends only on $n,p,\Lambda_1,\Lambda_2,\alpha,c_0,r_0,\mathrm{diam}(\Omega),q,s$. 
\end{theorem}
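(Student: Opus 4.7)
The plan is to derive Theorem 1.2 as a direct consequence of the good-$\lambda$ inequality \eqref{eq:good-lambda} of Theorem 1.1, following the standard route that converts a good-$\lambda$ bound into a norm inequality in a rearrangement-invariant space. The starting point is to rewrite the Lorentz quasi-norm via Definition 1.11, so that
\begin{align*}
\|\mathbf{M}_\alpha(|\nabla u|^p)\|_{L^{q,s}(\Omega)}^s = q\int_0^\infty \tau^{s}\, \mathcal{L}^n\bigl(\{\mathbf{M}_\alpha(|\nabla u|^p)>\tau\}\bigr)^{s/q}\,\frac{d\tau}{\tau}.
\end{align*}
After the rescaling $\tau = \varepsilon^{\alpha/n-1/\gamma}\lambda$ (so $d\tau/\tau=d\lambda/\lambda$), I would apply the elementary decomposition
\begin{align*}
\{\mathbf{M}_\alpha(|\nabla u|^p) > \varepsilon^{\alpha/n-1/\gamma}\lambda\}
\subset \mathcal{G}_\varepsilon(\lambda) \cup \{\mathbf{M}_\alpha(|\mathcal{E}|^p) > \varepsilon^{\theta(1-1/\gamma)}\lambda\},
\end{align*}
where $\mathcal{G}_\varepsilon(\lambda)$ is the set appearing on the left-hand side of \eqref{eq:good-lambda}, and estimate $\mathcal{L}^n(\mathcal{G}_\varepsilon(\lambda))$ by the good-$\lambda$ bound $C\varepsilon\,\mathcal{L}^n(\{\mathbf{M}_\alpha(|\nabla u|^p)>\lambda\})$.

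Using the sub-additivity inequality $(a+b)^{s/q}\le C_{s,q}(a^{s/q}+b^{s/q})$, splitting the resulting integral, and reversing the variable change $\lambda\mapsto \varepsilon^{-\theta(1-1/\gamma)}\tau$ in the data term, the computation collapses to
\begin{align*}
\|\mathbf{M}_\alpha(|\nabla u|^p)\|_{L^{q,s}(\Omega)}^{s} \le C\,\varepsilon^{s(1/q+\alpha/n-1/\gamma)}\, \|\mathbf{M}_\alpha(|\nabla u|^p)\|_{L^{q,s}(\Omega)}^{s} + C\,\varepsilon^{-s\Theta}\,\|\mathbf{M}_\alpha(|\mathcal{E}|^p)\|_{L^{q,s}(\Omega)}^{s},
\end{align*}
for some explicit exponent $\Theta=\Theta(\alpha,n,\gamma,\theta,q)$. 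The point of the admissible range $0\le\alpha<n/\gamma$ and $0<q<n\gamma/(n-\alpha\gamma)$ is precisely that
\[\tfrac{1}{q}+\tfrac{\alpha}{n}-\tfrac{1}{\gamma}>0,\]
so the exponent of $\varepsilon$ in the first term is strictly positive. Choosing $\varepsilon\in(0,\varepsilon_0)$ small enough lets us absorb the first term into the left-hand side, yielding \eqref{eq:regularityMalpha}. For $s=\infty$, the same scheme is carried out with the Marcinkiewicz quasi-norm $\sup_{\tau>0}\tau\,\mathcal{L}^n(\{\mathbf{M}_\alpha(|\nabla u|^p)>\tau\})^{1/q}$ in place of the integral form, and the algebra is actually cleaner because no sub-additivity inequality is needed.

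The main technical obstacle is the absorption step: the estimate above is only meaningful once one knows a priori that $\|\mathbf{M}_\alpha(|\nabla u|^p)\|_{L^{q,s}(\Omega)}$ is finite, which is not automatic since it is part of the conclusion. I would resolve this in the standard way by working with truncated quantities, replacing $\mathbf{M}_\alpha(|\nabla u|^p)$ by the bounded function $\min\{\mathbf{M}_\alpha(|\nabla u|^p),k\}$ (equivalently, truncating the integral in $\tau$ at level $k$), carrying out the absorption step with constants uniform in $k$, and finally sending $k\to\infty$ via monotone convergence. A parallel argument shows the right-hand side $\|\mathbf{M}_\alpha(|\mathcal{E}|^p)\|_{L^{q,s}(\Omega)}$ is the correct controlling quantity because $|\mathcal{E}|^p\in L^1(\Omega)$ by hypothesis, combined with the weak-type boundedness of $\mathbf{M}_\alpha$ recalled in Lemma 1.13; if it is infinite there is nothing to prove. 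Once these finiteness issues are handled, collecting constants gives the stated dependence on $n,p,\Lambda_1,\Lambda_2,\alpha,c_0,r_0,\mathrm{diam}(\Omega),q,s$.
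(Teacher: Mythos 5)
Your proposal follows essentially the same route as the paper's proof: rescale the level variable in the Lorentz quasi-norm, apply the good-$\lambda$ bound of Theorem~\ref{theo:good-lambda} together with the trivial splitting into the data superlevel set, change variables back in the data term, and absorb the solution term using $\frac{1}{q}+\frac{\alpha}{n}-\frac{1}{\gamma}>0$, with $s=\infty$ treated analogously. The only addition is your truncation/monotone-convergence argument justifying the absorption when the left-hand side is not known a priori to be finite, a point the paper passes over silently; this is a sound refinement rather than a different approach.
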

\begin{theorem}[Good-$\lambda$ theorem under assumption~\eqref{hyp:R}]
\label{theo:good-lambda-Rf} 
For any $\alpha \in [0,n)$ and $\varepsilon \in \mathbb{R}^+$, one can find some constants $\tilde{p} = \tilde{p}(n,p,\Lambda_1,\Lambda_2)>p$,  $\delta = \delta(n,p,\alpha,\varepsilon) \in (0,1)$, $a_{\infty} = a_{\infty}(n,p,\alpha)>1$ and $a_{\varepsilon} = a_{\varepsilon}(n,p,\alpha,\varepsilon) \in (0,1)$ such that if $\Omega$ is a $(\delta,r_0)$-Reifenberg flat domain~\eqref{hyp:R} satisfying $[\mathbb{A}]_{\tilde{p}}^{r_0} \le \delta$ for some $r_0 \in \mathbb{R}^+$ then
\begin{multline}\label{eq:good-lambda-Rf}
\mathcal{L}^n\left(\left\{\zeta \in \Omega: \ {\mathbf{M}}_{\alpha}(|\nabla u|^p)(\zeta) > a_{\infty} \lambda, \ {\mathbf{M}}_{\alpha}(|\mathcal{E}|^p)(\zeta) \le a_{\varepsilon} \lambda \right\} \right) \\
 \hspace{4cm} \le C \varepsilon \mathcal{L}^n\left(\left\{\zeta \in \Omega: \ {\mathbf{M}}_{\alpha} (|\nabla u|^p)(\zeta) > \lambda\right\}\right),
\end{multline}
holds for all $\lambda \in \mathbb{R}^+$, where $C = C(n, p, \Lambda_1, \Lambda_2,  \alpha, r_0, \mathrm{diam}(\Omega))>0$.
\end{theorem}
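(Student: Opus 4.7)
The plan is to adapt the good-$\lambda$ decomposition used for Theorem~\ref{theo:good-lambda} under~\eqref{hyp:P}, now exploiting the stronger hypotheses~\eqref{hyp:R} and $[\mathbb{A}]_{\tilde p}^{r_0}\le\delta$ to produce constants $a_\infty,a_\varepsilon$ that are independent of $\lambda$ (and with $a_\infty$ even independent of $\varepsilon$). Fix $\lambda>0$ and cover the level set
\begin{align*}
E_{a_\infty\lambda}=\left\{\zeta\in\Omega:\mathbf{M}_\alpha(|\nabla u|^p)(\zeta)>a_\infty\lambda\right\}
\end{align*}
by a Vitali-type family $\{B_{r_i}(\xi_i)\}$ selected so that the density of $E_{a_\infty\lambda}$ on each ball is comparable to $\varepsilon$ while being controlled on its enlargement. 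The target implication to verify on each such ball $B=B_{r_i}(\xi_i)$ is the contrapositive: whenever $E_{a_\infty\lambda}\cap B\neq\emptyset$ and there exists $\zeta_0\in B$ with both $\mathbf{M}_\alpha(|\mathcal E|^p)(\zeta_0)\le a_\varepsilon\lambda$ and $\mathbf{M}_\alpha(|\nabla u|^p)(\zeta_0)\le\lambda$, then the density of $E_{a_\infty\lambda}$ in $B$ is bounded by $C\varepsilon$. Summing over the Vitali cover then yields~\eqref{eq:good-lambda-Rf}.

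The per-ball implication is established by a two-step comparison of Caffarelli--Peral / Byun--Wang type. Given $B=B_{r_i}(\xi_i)$ (with an enlargement factor absorbed into the constant), I introduce $v\in u+W_0^{1,p}(B\cap\Omega)$ solving $\mathrm{div}\,\mathbb{A}(x,\nabla v)=0$ in $B\cap\Omega$ with $v=u$ on $\partial(B\cap\Omega)$; testing with $u-v$ and using~\eqref{cond:A1}--\eqref{cond:A2} together with the information at $\zeta_0$ produces
\begin{align*}
\fint_{B\cap\Omega}|\nabla u-\nabla v|^p\,dx\le C\,a_\varepsilon\,r_i^{-\alpha}\,\lambda.
\end{align*}
Next let $w$ solve $\mathrm{div}\,\overline{\mathbb{A}}_B(\nabla w)=0$ in $B\cap\Omega$ with $w-v\in W_0^{1,p}(B\cap\Omega)$; the BMO smallness~\eqref{cond:BMO} combined with a Gehring higher-integrability bound for $\nabla v$ (this is where the exponent $\tilde p>p$ enters) gives
\begin{align*}
\fint_{B\cap\Omega}|\nabla v-\nabla w|^p\,dx\le C\,\delta^{\kappa}\fint_{B}|\nabla u|^p\,dx
\end{align*}
for some $\kappa=\kappa(p,\tilde p)>0$. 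Finally, because $\overline{\mathbb{A}}_B$ is $x$-independent and $\Omega\cap B$ is close to a half-space by~\eqref{hyp:R}, the now-standard interior and boundary Lipschitz regularity for $w$ provides $\|\nabla w\|_{L^\infty(B/2)}^p\le C\fint_B|\nabla u|^p\,dx\le C\,r_i^{-\alpha}\lambda$, the last step using the defining property of the Vitali selection together with $\mathbf{M}_\alpha(|\nabla u|^p)(\zeta_0)\le\lambda$. Chaining the three estimates and bounding $\mathbf{M}_\alpha(|\nabla u|^p)$ at an arbitrary $\zeta\in B$ by its contributions from balls of scale $\le r_i$ and $>r_i$ yields $\mathbf{M}_\alpha(|\nabla u|^p)(\zeta)\le a_\infty\lambda$ on $B$, provided $a_\infty$ is chosen large first (controlling the large-scale term and the Lipschitz constant of $w$), then $\delta$ small (so that $C\delta^\kappa\ll 1$), and finally $a_\varepsilon$ small (absorbing the first comparison error).

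The principal technical obstacle is twofold. Firstly, the fractional weight $\varrho^\alpha$ in $\mathbf{M}_\alpha$ destroys the scale invariance that a pure Hardy--Littlewood good-$\lambda$ argument enjoys: I would split the supremum defining $\mathbf{M}_\alpha$ into scales $\varrho\le r_\star$ and $\varrho>r_\star$ for a threshold $r_\star=r_\star(\varepsilon,r_0,\mathrm{diam}(\Omega))$, controlling the large-scale part through $\varrho^\alpha\le\mathrm{diam}(\Omega)^\alpha$ and an $L^p$-bound on $\nabla u$ that is absorbed into the constant $C$. Secondly, obtaining the boundary Lipschitz estimate for $w$ on a Reifenberg-flat region, with a constant uniform as $\delta\to 0$, requires a boundary flattening and a comparison with the half-space $p$-Laplacian in the spirit of Kilpel\"ainen--Zhong and Byun--Wang; it is precisely here that~\eqref{hyp:R}, rather than the weaker~\eqref{hyp:P}, is indispensable, and it is this improved reference regularity that permits $a_\infty$ to be chosen depending only on $n,p,\alpha$, in contrast with the $\varepsilon$-dependent factor $\varepsilon^{\alpha/n-1/\gamma}$ appearing in Theorem~\ref{theo:good-lambda}.
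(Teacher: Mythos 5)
Your overall skeleton does track the paper's: a covering/Calder\'on--Zygmund argument (the paper's Lemma~\ref{lem:mainlem}), a splitting of $\mathbf{M}_\alpha$ into small and large scales (Lemma~\ref{lem:A2}), a two-step comparison $u\to v\to w$ with the frozen-coefficient problem and an $L^\infty$ bound for $\nabla w$ (Lemmas~\ref{lem:u-w},~\ref{lem:B1},~\ref{lem:B2}). But the decisive quantitative step is missing, and as written your conclusion is internally inconsistent: you claim that chaining the comparison estimates yields $\mathbf{M}_\alpha(|\nabla u|^p)(\zeta)\le a_\infty\lambda$ for \emph{every} $\zeta\in B$, which would give density $0$, not $C\varepsilon$, and it is false in general because the comparisons control $|\nabla u-\nabla w|^p$ only in $L^1$-average, so its (truncated) maximal function may exceed $a_\infty\lambda$ on a small exceptional set. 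The factor $\varepsilon$ must come from the weak-type bound of Lemma~\ref{lem:M_alpha} applied to $\chi_{B_{2\varrho}}|\nabla u-\nabla w|^p$: the smallness $\fint|\nabla u-\nabla w|^p\le C\bigl(\delta^p+\delta^{p(1-\theta)}a_\varepsilon\bigr)\varrho^{-\alpha}\lambda$ is converted into
\begin{equation*}
\mathcal{L}^n\left(\left\{\zeta\in\Omega_\varrho(\xi):\ \mathbf{M}^{\varrho}_{\alpha}\bigl(\chi_{B_{2\varrho}(\xi)}|\nabla u|^p\bigr)(\zeta)>a_\infty\lambda\right\}\right)\le C\left(\frac{\delta^p}{a_\infty}\right)^{\frac{n}{n-\alpha}}\mathcal{L}^n\bigl(B_\varrho(\xi)\bigr)\le\varepsilon\,\mathcal{L}^n\bigl(B_\varrho(\xi)\bigr),
\end{equation*}
which is exactly the content of the paper's Lemma~\ref{lem:B2} (estimates \eqref{est:B2-1}--\eqref{est:B2-2}); without it your argument has no mechanism producing $\varepsilon$. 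Relatedly, the large-scale part of $\mathbf{M}_\alpha$ cannot be ``absorbed into the constant $C$'' via a global $L^p$ bound --- the inequality must hold for all $\lambda>0$ and the right-hand side is a measure, not an integral; the tail is controlled by $3^n\mathbf{M}_\alpha(|\nabla u|^p)(\zeta_0)\le 3^n\lambda$ at the good point, which is precisely why $a_\infty$ must exceed a dimensional constant (Lemma~\ref{lem:A2}).

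Two further gaps. First, boundary data: you take $v=u$ on $\partial(B\cap\Omega)$, so on the portion of $\partial\Omega$ inside $B$ both $v$ and $w$ carry the data $g$, which is only $W^{1,p}$; the boundary Lipschitz estimate for the frozen-coefficient equation on a Reifenberg flat boundary requires (essentially) zero boundary data there, so your claimed $\|\nabla w\|_{L^\infty}$ bound fails near $\partial\Omega$. The paper's comparison problem \eqref{eq:I1} uses $v=u-g$, which vanishes on $\partial\Omega$, and the price $|\nabla g|^p$ is exactly what $\mathcal{E}$ was designed to absorb. Second, your first comparison estimate $\fint|\nabla u-\nabla v|^p\le C a_\varepsilon\varrho^{-\alpha}\lambda$ is not what testing gives when $p\ne2$: monotonicity yields only $\delta\fint|\nabla u|^p+C\delta^{1-\theta}\fint|\mathcal{E}|^p$ as in \eqref{ineq:lem:u-w}, so a small but $\lambda$-proportional term survives and must be carried through the absorption (this also affects the order in which $\delta$, $a_\varepsilon$, $a_\infty$ are fixed). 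Finally, your Vitali selection still needs the initial global density bound (condition (i) of Lemma~\ref{lem:mainlem}), which the paper obtains from the energy estimate of Lemma~\ref{lem:global} via Lemma~\ref{lem:A1} by taking $a_\varepsilon$ small; your proposal does not address this step.
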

\begin{theorem}[Global Lorentz regularity estimate under assumption~\eqref{hyp:R}]
\label{theo:main-Rf}
For any $\alpha \in [0,n)$, one can find $\tilde{p} = \tilde{p}(n,p,\Lambda_1,\Lambda_2)>p$ and  $\delta = \delta(n,p,\alpha) \in (0,1)$ such that if $\Omega$ is a $(\delta,r_0)$-Reifenberg flat domain~\eqref{hyp:R} satisfying $[\mathbb{A}]_{\tilde{p}}^{r_0} \le \delta$ for some $r_0 \in \mathbb{R}^+$ then~\eqref{eq:regularityMalpha} holds for every $0<q<\infty$ and $0<s \le \infty$. 
\end{theorem}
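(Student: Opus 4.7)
The plan is to derive the Lorentz estimate from the good-$\lambda$ inequality of Theorem~\ref{theo:good-lambda-Rf} via the layer-cake representation of the Lorentz quasinorm. Write
$$U := \mathbf{M}_\alpha(|\nabla u|^p), \qquad V := \mathbf{M}_\alpha(|\mathcal{E}|^p),$$
so that, for $s<\infty$,
$$\|U\|_{L^{q,s}(\Omega)}^s = q\int_0^\infty \tau^{s-1}\mathcal{L}^n(\{\zeta\in\Omega:\,U(\zeta)>\tau\})^{s/q}\,d\tau.$$
The change of variables $\tau=a_\infty\lambda$ inside this integral brings the integrand into the exact form to which Theorem~\ref{theo:good-lambda-Rf} can be applied.

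First I would fix a parameter $\varepsilon\in(0,1)$ (to be chosen later) and let $\tilde p$, $\delta=\delta(n,p,\alpha,\varepsilon)$, $a_\infty=a_\infty(n,p,\alpha)$, $a_\varepsilon=a_\varepsilon(n,p,\alpha,\varepsilon)$ be the constants produced by Theorem~\ref{theo:good-lambda-Rf}, under the corresponding hypothesis on $\Omega$ and $\mathbb{A}$. For every $\lambda>0$ the elementary inclusion
$$\{U>a_\infty\lambda\}\subset \{U>a_\infty\lambda,\ V\le a_\varepsilon\lambda\}\cup\{V>a_\varepsilon\lambda\},$$
combined with Theorem~\ref{theo:good-lambda-Rf}, gives
$$\mathcal{L}^n(\{U>a_\infty\lambda\})\le C\varepsilon\,\mathcal{L}^n(\{U>\lambda\})+\mathcal{L}^n(\{V>a_\varepsilon\lambda\}).$$
Substituting this bound into the layer-cake integral, applying the elementary inequality $(x+y)^{s/q}\le 2^{(s/q-1)_+}(x^{s/q}+y^{s/q})$, and reversing the substitutions yields an estimate of the shape
$$\|U\|_{L^{q,s}(\Omega)}^s\le C_1 a_\infty^s\,\varepsilon^{s/q}\,\|U\|_{L^{q,s}(\Omega)}^s + C_2(a_\infty/a_\varepsilon)^s\,\|V\|_{L^{q,s}(\Omega)}^s.$$
Because $a_\infty$ is independent of $\varepsilon$ and $q<\infty$, I may choose $\varepsilon=\varepsilon(n,p,\alpha,q,s)$ so that $C_1 a_\infty^s\varepsilon^{s/q}\le 1/2$, and absorb the first term on the left. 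The case $s=\infty$ is handled identically, working directly with the supremum defining the Marcinkiewicz quasinorm.

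The principal obstacle is legitimising the absorption, which presumes $\|U\|_{L^{q,s}(\Omega)}<\infty$ before any cancellation is performed. I would bypass this circularity by truncation: replace $U$ on the left-hand side by $U_M:=\min(U,M)$ for $M>0$, observe that
$$\|U_M\|_{L^{q,s}(\Omega)}^s = q\int_0^M \tau^{s-1}\mathcal{L}^n(\{U>\tau\})^{s/q}\,d\tau$$
is automatically finite since $\mathcal{L}^n(\Omega)<\infty$, repeat the previous argument with $U_M$ in place of $U$ on the left (the good-$\lambda$ inequality still applies since $\{U_M>a_\infty\lambda\}\subseteq\{U>a_\infty\lambda\}$), obtain a bound uniform in $M$, and pass to the limit $M\to\infty$ via monotone convergence. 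No ingredient beyond Theorem~\ref{theo:good-lambda-Rf} is needed, so the final constant $C$ depends only on $n,p,\Lambda_1,\Lambda_2,\alpha,r_0,\mathrm{diam}(\Omega),q,s$, as claimed.
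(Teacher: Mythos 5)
Your proposal is correct and follows essentially the same route as the paper's proof: the layer-cake representation of the Lorentz quasinorm, the change of variable $\tau=a_\infty\lambda$, the good-$\lambda$ inequality of Theorem~\ref{theo:good-lambda-Rf}, and absorption after choosing $\varepsilon$ so that $C a_\infty^s\varepsilon^{s/q}\le \frac{1}{2}$, with the case $s=\infty$ treated analogously. Your truncation argument legitimising the absorption (finiteness of the left-hand side before cancellation) is a careful addition that the paper leaves implicit.
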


Let us discuss some comments on the results stated above. According to Remark~\ref{rem:2types}, hypothesis~\eqref{hyp:R} together with the small BMO condition on $\mathbb{A}$ in~\eqref{cond:BMO} ensures the better regularity results of solutions than ones under hypothesis~\eqref{hyp:P}. The ultimate essence of this difference is the ability to cross the boundary of domain when $\Omega$ satisfies~\eqref{hyp:P} or~\eqref{hyp:R}, separately. Theorems~\ref{theo:main} and~\ref{theo:main-Rf} are in fact consequence of ``good-$\lambda$'' results in Theorems \ref{theo:good-lambda} and \ref{theo:good-lambda-Rf} over measuring sets, with different constraints on the boundary of $\Omega$ confirm higher regularity results regarding the elliptic equations involving the $p$-Laplacian. More particularly, a noteworthy feature of Theorem~\ref{theo:main-Rf} is that the scales $q$ and $s$ vary in the range specified in~\eqref{eq:regularityMalpha}, it implies that the critical exponent $q$ of Lorentz spaces $L^{q,s}(\Omega)$ can be enlarged. This result extends to a broader range of $q$, compared to the one in Theorem~\ref{theo:main}. \\

1.4. \textbf{Paper Overview.} The rest of the paper is organized as follows. In the next section, we state and prove some preliminary results needed in the paper, in which both local and boundary comparison estimates in Lebesgue spaces will be obtained. Section~\ref{sec:levelset} deals with some level-set inequalities that are important and essential in our sophisticated proofs later. Towards this goal, the hypotheses of the Covering Lemma~\ref{lem:mainlem} are satisfied. Finally, Section~\ref{sec:proofs} is dedicated to the proofs of main theorems.

\section{Lebesgue comparison estimates}\label{sec:inter_bound}
In this section, we first recall a global estimate in Lebesgue space. We then establish some local comparison estimates between the weak solution to problem~\eqref{eq:main} in a ball with solutions to corresponding homogeneous problems in the smaller balls. 
\subsection{Global estimate}
\begin{lemma}\label{lem:global}
There exists a positive constant $C = C(n,p,\Lambda_1,\Lambda_2)$ such that
\begin{equation}\label{eq:prop1}
\int_{\Omega} |\nabla u|^p dx \le C \int_{\Omega} |\mathcal{E}|^p dx.
\end{equation}
\end{lemma}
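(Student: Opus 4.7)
The plan is to derive \eqref{eq:prop1} by the standard energy method: test the weak formulation against $\varphi=u-g$ (which lies in $W_0^{1,p}(\Omega)$ by the Dirichlet boundary condition) and absorb a small fraction of $\int_\Omega|\nabla u|^p$ onto the left using Young's inequality and the Poincaré inequality. First I would record a coercivity statement extracted from \eqref{cond:A1}--\eqref{cond:A2}. The growth bound \eqref{cond:A1} forces $\mathbb{A}(x,0)=0$, and letting $z_2\to 0$ in \eqref{cond:A2} (the $\Phi$-expression $(|z_1|^2+|z_2|^2)^{(p-2)/2}|z_1-z_2|^2$ tends to $|z_1|^p$ for every $p>1$, by direct computation), we get the pointwise inequality
\begin{equation*}
\langle \mathbb{A}(x,z),z\rangle \;\ge\; \Lambda_2 |z|^p \qquad \text{for a.e.\ } x\in\Omega,\ z\in\mathbb{R}^n.
\end{equation*}

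Next I would choose $\varphi=u-g$ in Definition~\ref{def:weak_sol} and rearrange to obtain
\begin{equation*}
\int_\Omega \langle \mathbb{A}(x,\nabla u),\nabla u\rangle\,dx \;=\; \int_\Omega \langle \mathbb{A}(x,\nabla u),\nabla g\rangle\,dx + \int_\Omega\langle \mathbb{B}(x,\mathbf{F}),\nabla(u-g)\rangle\,dx - \int_\Omega f(u-g)\,dx.
\end{equation*}
The left-hand side is bounded below by $\Lambda_2\int_\Omega |\nabla u|^p\,dx$ by the coercivity above. For the three terms on the right I would apply \eqref{cond:A1} together with Hölder and Young's inequality with a small parameter $\varepsilon$: each of $\Lambda_1\int_\Omega |\nabla u|^{p-1}|\nabla g|\,dx$ and $\Lambda_1\int_\Omega |\mathbf{F}|^{p-1}|\nabla u|\,dx$ splits into $\varepsilon \int_\Omega |\nabla u|^p\,dx$ plus $C(\varepsilon)\int_\Omega (|\nabla g|^p+|\mathbf{F}|^p)\,dx$, and similarly $\Lambda_1\int_\Omega|\mathbf{F}|^{p-1}|\nabla g|\,dx\le C\int_\Omega(|\mathbf{F}|^p+|\nabla g|^p)\,dx$.

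For the $f$-integral I would invoke Hölder with conjugate exponents $(p/(p-1),p)$ and the Poincaré inequality, valid because $u-g\in W_0^{1,p}(\Omega)$, to obtain
\begin{equation*}
\left|\int_\Omega f(u-g)\,dx\right| \;\le\; \|f\|_{L^{p/(p-1)}(\Omega)}\|u-g\|_{L^p(\Omega)} \;\le\; C\|f\|_{L^{p/(p-1)}(\Omega)}\|\nabla(u-g)\|_{L^p(\Omega)},
\end{equation*}
and then Young's inequality with the same conjugate pair converts this into $\varepsilon \int_\Omega |\nabla u|^p\,dx + C(\varepsilon)\int_\Omega (|\nabla g|^p + |f|^{p/(p-1)})\,dx$. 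Choosing $\varepsilon$ small relative to $\Lambda_2$ allows us to absorb all the $\int_\Omega|\nabla u|^p$ contributions on the right into the left-hand side, yielding \eqref{eq:prop1} with a constant depending on $n,p,\Lambda_1,\Lambda_2$ (and implicitly on $\mathrm{diam}(\Omega)$ through the Poincaré constant).

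I do not anticipate a genuine obstacle here; the only mildly delicate point is the passage $z_2\to 0$ in \eqref{cond:A2} when $1<p<2$, where the factor $(|z_1|^2+|z_2|^2)^{(p-2)/2}$ blows up as $z_2\to 0$ but is exactly compensated by $|z_1-z_2|^2\to |z_1|^2$ to produce the clean bound $\Lambda_2|z_1|^p$. Everything else is a textbook energy estimate using Hölder, Poincaré and Young's inequality.
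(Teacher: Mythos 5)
Your proof is correct and follows essentially the same route as the paper's: test with $\varphi=u-g$, use the structure conditions \eqref{cond:A1}--\eqref{cond:A2} for coercivity, estimate the divergence-form terms by H\"older and Young, and handle the $f$-term via H\"older together with a Poincar\'e--Sobolev inequality (the paper invokes Sobolev's inequality with $p^*=p$, which is the same device), then absorb. Your explicit derivation of $\langle\mathbb{A}(x,z),z\rangle\ge\Lambda_2|z|^p$ from $\mathbb{A}(x,0)=0$ and the limit $z_2\to 0$ merely spells out a step the paper leaves implicit.
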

\begin{proof}
The variational formula of equation~\eqref{eq:main} is given by
\begin{equation*}
\int_{\Omega} \langle \mathbb{A}(x,\nabla u), \nabla \varphi \rangle dx = \int_{\Omega} \langle \mathbb{B}(x, \mathbf{F}), \nabla \varphi \rangle dx - \int_{\Omega} f \varphi dx,
\end{equation*}
for all test functions $\varphi$ in Sobolev space $W^{1,p}_0(\Omega)$. By taking $\varphi = u - g \in W^{1,p}_0(\Omega)$ as a test function of equation~\eqref{eq:main}, it follows that
\begin{equation*}
\int_{\Omega} \langle \mathbb{A}(x,\nabla u), \nabla u \rangle dx = \int_{\Omega} \langle \mathbb{A}(x,\nabla u), \nabla  g \rangle dx + \int_{\Omega} \langle \mathbb{B}(x,\mathbf{F}), \nabla u - \nabla g \rangle dx  - \int_{\Omega} f (u - g) dx.
\end{equation*}
Thanks to assumptions~\eqref{cond:A1} and~\eqref{cond:A2} on operators $\mathbb{A}$ and $\mathbb{B}$, one deduces from the above equation that
\begin{align}\label{ineq:S-12}
 \int_{\Omega} |\nabla u|^p dx & \le \Lambda_2^{-1}(\Lambda_1+1)  \left(\mathcal{S}_1 +  \mathcal{S}_2 \right),
\end{align}
where $\mathcal{S}_1$ and $\mathcal{S}_2$ are given by
\begin{align*}
\mathcal{S}_1 =  \int_{\Omega} |\nabla u|^{p-1} |\nabla g| dx + \int_{\Omega} |\mathbf{F}|^{p-1} \left(|\nabla u| + |\nabla g|\right) dx, 
\end{align*}
and
\begin{align*}
\mathcal{S}_2 = \int_{\Omega} |f||u-g| dx.
\end{align*}
The first term $\mathcal{S}_1$ can be estimated by applying H{\"o}lder and Young's inequalities. It is easily to check 
\begin{align}\nonumber
\mathcal{S}_1 & \le  \left( \int_{\Omega} |\nabla u|^{p} dx \right)^{\frac{p-1}{p}}\left( \int_{\Omega} |\nabla g|^p dx \right)^{\frac{1}{p}} + \left( \int_{\Omega} |\mathbf{F}|^{p} dx \right)^{\frac{p-1}{p}}\left( \int_{\Omega} |\nabla u|^p dx \right)^{\frac{1}{p}} \\ \nonumber
 & \hspace{3cm} +  \left( \int_{\Omega} |\mathbf{F}|^{p} dx \right)^{\frac{p-1}{p}}\left( \int_{\Omega} |\nabla g|^p dx \right)^{\frac{1}{p}}  \\ \label{ineq:S_1}
 & \le \varepsilon \int_{\Omega} |\nabla u|^{p} dx  + c(\varepsilon) \int_{\Omega} |\mathbf{F}|^{p} dx  + c(\varepsilon) \int_{\Omega} |\nabla g|^{p} dx,
\end{align}
for every positive number $\varepsilon$. Since $1 < p < n$, Sobolev's inequality gives us
\begin{align}\label{ineq:Sobolev}
\left(\int_{\Omega} |u-g|^{p^*}dx\right)^{\frac{1}{p^*}} \le C \left(\int_{\Omega} |\nabla u - \nabla g|^p dx \right)^{\frac{1}{p}},
\end{align}
for all $1 \le p^* \le \frac{np}{n-p}$. Thanks to H{\"o}lder's inequality and applying~\eqref{ineq:Sobolev} with $p^* = p$, we can estimate the next term $\mathcal{S}_2$ as follows
\begin{align}\nonumber
\mathcal{S}_2 & \le  \left(\int_{\Omega} |u-g|^{p}dx\right)^{\frac{1}{p}} \left(\int_{\Omega} |f|^{\frac{p}{p-1}} dx\right)^{1 - \frac{1}{p}} \\ \nonumber
&\le C \left(\int_{\Omega} |\nabla u - \nabla g|^{p} dx\right)^{\frac{1}{p}} \left(\int_{\Omega} |f|^{\frac{p}{p-1}} dx\right)^{1 - \frac{1}{p}}   \\ \label{ineq:S_2}
& \le \varepsilon \int_{\Omega} \left( |\nabla u|^p + |\nabla g|^p \right) dx +  c(\varepsilon) \int_{\Omega} |f|^{\frac{p}{p-1}} dx.
\end{align} 
Then,~\eqref{eq:prop1} can be handled by taking into account~\eqref{ineq:S_1} and~\eqref{ineq:S_2} to~\eqref{ineq:S-12}, with a suitable choice of $\varepsilon = \frac{1}{4}{(\Lambda_1+1)}^{-1}{\Lambda_2}$.
\end{proof}

\subsection{Local comparison estimates}
Let us fix a point $\xi \in \overline{\Omega}$ and $0<2\varrho \le r_0$, where $\Omega$ satisfies~\eqref{hyp:P} with two positive constants $c_0$ and $r_0$. We recall here notation $\Omega_{2\varrho}(\xi)$ denotes the set $B_{2\varrho}(\xi) \cap \Omega$. 
 
\begin{lemma}\label{lem:u-w}
Assume that $v \in W^{1,p}(\Omega_{2\varrho}(\xi))$ is the unique solution to the following equation:
\begin{equation}\label{eq:I1}
\begin{cases} \mbox{div} \left(\mathbb{A}(x,\nabla v)\right) & = \ 0, \quad \ \quad \mbox{ in } {\Omega_{2\varrho}(\xi)},\\ 
\hspace{1.2cm} v & = \ u - g, \ \mbox{ on } \partial {\Omega_{2\varrho}(\xi)}.\end{cases}
\end{equation}
Then there are two constants $\gamma = \gamma(n,p,\Lambda_1,\Lambda_2)>1$ and $C = C(n,p,\Lambda_1,\Lambda_2)>0$ such that 
\begin{equation}\label{ineq:RH}
\left(\fint_{\Omega_{\varrho}(\xi)}|\nabla v|^{\gamma p} dx\right)^{\frac{1}{\gamma p}}\leq C\left(\fint_{\Omega_{2\varrho}(\xi)}|\nabla v|^p dx\right)^{\frac{1}{p}}.
\end{equation}
Moreover for all $\delta \in (0,1)$ there holds
\begin{align}\label{ineq:lem:u-w}
\fint_{{\Omega_{2\varrho}(\xi)}} |\nabla u - \nabla v|^p dx  & \le \delta \fint_{{\Omega_{2\varrho}(\xi)}} |\nabla u|^pdx + C {\delta}^{1-\theta } \fint_{{\Omega_{2\varrho}(\xi)}} |\mathcal{E}|^p dx,   
\end{align}
where the constant $\theta $ is determined by
\begin{align}\label{def:beta}
\theta  = \begin{cases} p, & \ \mbox{ if }  \ 2 \le p \le n, \\ \frac{2}{p-1}, & \ \mbox{ if } \ 1< p <2.\end{cases}
\end{align}
\end{lemma}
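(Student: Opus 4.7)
The plan is to prove the two assertions in the lemma separately. The reverse Hölder inequality~\eqref{ineq:RH} will be obtained from a Gehring-type self-improving argument, and the comparison estimate~\eqref{ineq:lem:u-w} will be obtained by testing the difference of the weak formulations of~\eqref{eq:main} and~\eqref{eq:I1} with the admissible function $u-g-v$.

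For~\eqref{ineq:RH}, the first step is to establish a Caccioppoli inequality on every sub-ball $B_{\sigma}(\tilde\xi) \subset B_{2\varrho}(\xi)$: testing the equation for $v$ against $\eta^{p}(v-c)$ for a smooth cutoff $\eta$ and a suitable constant $c$ (to be chosen momentarily), and using~\eqref{cond:A1} and~\eqref{cond:A2}, we obtain
\begin{equation*}
\int_{B_{\sigma/2}(\tilde\xi)\cap \Omega}|\nabla v|^{p}dx \leq \frac{C}{\sigma^{p}}\int_{B_{\sigma}(\tilde\xi)\cap \Omega}|v-c|^{p}dx.
\end{equation*}
For interior balls, choose $c$ to be the mean value of $v$ on $B_{\sigma}(\tilde\xi)$ and apply the standard Poincaré--Sobolev inequality. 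For boundary balls, i.e.\ those meeting $\partial\Omega$, choose $c=0$ and extend $v-(u-g)$ by zero across $\partial\Omega$; here the $p$-capacity thickness assumption~\eqref{hyp:P} allows the application of Sobolev--Poincaré with zero boundary values (on a set of positive capacity), yielding the sub-$p$ reverse inequality
\begin{equation*}
\left(\fint_{B_{\sigma/2}(\tilde\xi)\cap \Omega}|\nabla v|^{p}dx\right)^{1/p} \leq C\left(\fint_{B_{\sigma}(\tilde\xi)\cap \Omega}|\nabla v|^{q}dx\right)^{1/q}
\end{equation*}
for some $q<p$. Gehring's lemma then upgrades this to a higher integrability exponent $\gamma p$ with $\gamma = \gamma(n,p,\Lambda_{1},\Lambda_{2})>1$ and delivers~\eqref{ineq:RH}.

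For~\eqref{ineq:lem:u-w}, the admissible test function is $\varphi=u-g-v$, which lies in $W^{1,p}_{0}(\Omega_{2\varrho}(\xi))$ and, extended by zero, in $W^{1,p}_{0}(\Omega)$. Subtracting the weak formulations of $v$ and $u$ tested against $\varphi$ gives
\begin{equation*}
\int_{\Omega_{2\varrho}(\xi)}\!\!\langle \mathbb{A}(x,\nabla u)-\mathbb{A}(x,\nabla v), \nabla u-\nabla v\rangle dx = I_{1}+I_{2}+I_{3},
\end{equation*}
where $I_{1}=\int\langle \mathbb{A}(x,\nabla u)-\mathbb{A}(x,\nabla v),\nabla g\rangle$, $I_{2}=\int\langle \mathbb{B}(x,\mathbf{F}),\nabla(u-g-v)\rangle$ and $I_{3}=-\int f(u-g-v)$. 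The monotonicity assumption~\eqref{cond:A2} bounds the LHS from below by $\Lambda_{2}\int \Phi(\nabla u,\nabla v)dx$. The growth assumption~\eqref{cond:A1} controls $I_{1}$ and $I_{2}$ by $\int (|\nabla u|+|\nabla v|)^{p-1}(|\nabla g|+|\mathbf{F}|)dx$ plus $\int|\mathbf{F}|^{p-1}|\nabla u-\nabla v|dx$, while Hölder together with the Sobolev--Poincaré inequality~\eqref{ineq:Sobolev} applied to $u-g-v\in W^{1,p}_{0}(\Omega_{2\varrho}(\xi))$ converts $I_{3}$ into a term of the same form with $|f|^{p/(p-1)}$. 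Young's inequality then absorbs all $|\nabla u-\nabla v|$ contributions into the LHS.

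When $p\geq 2$ the inequality $\Phi(\nabla u,\nabla v)\geq 2^{(2-p)/2}|\nabla u-\nabla v|^{p}$ converts the lower bound directly into $|\nabla u-\nabla v|^{p}$, and a final Young's inequality with a parameter $\delta$ yields the estimate with $\theta=p$. The main obstacle is the case $1<p<2$, where $\Phi$ is not comparable to $|\nabla u-\nabla v|^{p}$; instead one writes
\begin{equation*}
|\nabla u-\nabla v|^{p}=\bigl[\Phi(\nabla u,\nabla v)\bigr]^{p/2}\bigl(|\nabla u|^{2}+|\nabla v|^{2}\bigr)^{p(2-p)/4},
\end{equation*}
and applies Hölder with exponents $(2/p,2/(2-p))$, followed by Young's inequality splitting off $\delta\fint|\nabla u|^{p}$. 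The careful bookkeeping of these exponents is what forces the constant on the data to scale like $\delta^{1-\theta}$ with $\theta=2/(p-1)$, precisely matching~\eqref{def:beta}.
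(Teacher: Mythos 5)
Your argument for the comparison estimate \eqref{ineq:lem:u-w} is essentially the paper's own proof: the same test function $u-v-g$, the same growth/monotonicity bounds followed by H\"older, Sobolev--Poincar\'e and Young estimates on the three resulting terms, and the same case split in which $p\ge 2$ uses $\Phi\gtrsim|\nabla u-\nabla v|^p$ while $1<p<2$ uses the identity $|\nabla u-\nabla v|^p=[\Phi]^{p/2}(|\nabla u|^2+|\nabla v|^2)^{p(2-p)/4}$ with H\"older exponents $(2/p,2/(2-p))$, yielding exactly the exponents $\theta=p$ and $\theta=2/(p-1)$. The only deviation is the reverse H\"older inequality \eqref{ineq:RH}, which the paper does not prove but cites (Gehring's lemma, Giusti Theorem 6.7 and related references), whereas you sketch the standard Caccioppoli--Sobolev--Poincar\'e--Gehring argument, with the $p$-capacity thickness handling boundary balls (note $u-g$ vanishes on $\partial\Omega$, so $v$ has zero boundary values there); this is precisely the route taken in the cited sources, so the approaches coincide in substance.
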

\begin{proof}
The first inequality~\eqref{ineq:lem:u-w} is well-known as a type of Gehring's lemma or as a kind of ``reverse'' H\"older inequality which has been investigated in~\cite[Theorem 6.7]{Giu}. We also refer the reader to several papers such as~\cite{Mi3, Phuc1, 55QH4, MPT2018, HOk2019} and references therein for the proof of this inequality. Therefore, to prove the local comparison~\eqref{ineq:lem:u-w} is our major goal here. Let us first subtract the variational formulas of equations~\eqref{eq:main} and~\eqref{eq:I1}, that yields
\begin{equation*}
\int_{\Omega} \langle \mathbb{A}(x,\nabla u) - \mathbb{A}(x,\nabla v), \nabla \varphi \rangle dx = \int_{\Omega} \langle \mathbb{B}(x, \mathbf{F}), \nabla \varphi \rangle dx - \int_{\Omega} f \varphi dx,
\end{equation*}
for all $\varphi \in W^{1,p}_0(\Omega)$. By taking $\varphi = u - v - g \in W^{1,p}_0(\Omega_{2\varrho}(\xi))$ as a test function of equation~\eqref{eq:main}, it follows that
\begin{align*}
\fint_{{\Omega_{2\varrho}(\xi)}} & \langle \mathbb{A}(x,\nabla u) - \mathbb{A}(x,\nabla v), \nabla u -\nabla v \rangle dx \\
& = \fint_{{\Omega_{2\varrho}(\xi)}} \langle \mathbb{A}(x,\nabla u) - \mathbb{A}(x,\nabla v), \nabla  g \rangle dx   +  \fint_{{\Omega_{2\varrho}(\xi)}} \langle \mathbb{B}(x,\mathbf{F}), \nabla u - \nabla v \rangle dx \\ 
& \qquad - \fint_{{\Omega_{2\varrho}(\xi)}} \langle \mathbb{B}(x,\mathbf{F}), \nabla g \rangle dx  - \fint_{{\Omega_{2\varrho}(\xi)}} f (u - v - g) dx.
\end{align*}
Assumptions~\eqref{cond:A1} and~\eqref{cond:A2} of operator $\mathbb{A}$ give that
\begin{align}\label{est:u-w-I}
\Lambda_2 \fint_{{\Omega_{2\varrho}(\xi)}} {\Phi}(\nabla u,\nabla v) dx & \le (\Lambda_1+1) \left(\mathcal{S}_1 + \mathcal{S}_2  + \mathcal{S}_3 \right), 
\end{align}
where ${\Phi}$ is defined in~\eqref{def:G} and $\mathcal{S}_1$, $\mathcal{S}_2$, $\mathcal{S}_3$ are given respectively by
\begin{align*}
& \mathcal{S}_1 = \fint_{{\Omega_{2\varrho}(\xi)}} \left( |\nabla u|^{p-1} + |\nabla v|^{p-1}\right)|\nabla g| dx, \\ 
& \mathcal{S}_2 = \fint_{{\Omega_{2\varrho}(\xi)}} |\mathbf{F}|^{p-1} |\nabla u - \nabla v| dx +  \fint_{{\Omega_{2\varrho}(\xi)}} |\mathbf{F}|^{p-1} |\nabla g| dx, \\ 
& \mathcal{S}_3 = \fint_{{\Omega_{2\varrho}(\xi)}} |f||u - v -g|dx. 
\end{align*}
The first term $\mathcal{S}_1$ can be estimated by H{\"o}lder's and Young's inequalities with the following remark
$$|\nabla v|^{p-1} \le C(p) (|\nabla u|^{p-1} + |\nabla u - \nabla v|^{p-1}).$$ 
More precisely, for all $\varepsilon>0$, there holds
\begin{align}\nonumber 
\mathcal{S}_1 & \le C \fint_{{\Omega_{2\varrho}(\xi)}} (|\nabla u|^{p-1} + |\nabla u - \nabla v|^{p-1}) |\nabla g| dx \\ \nonumber 
& \le \frac{\varepsilon}{3} \fint_{{\Omega_{2\varrho}(\xi)}} |\nabla u - \nabla v|^p dx + C \varepsilon^{1-p} \fint_{{\Omega_{2\varrho}(\xi)}} |\nabla g|^p dx  \\ \label{est:S1}
& \hspace{3cm} + C\left(\fint_{{\Omega_{2\varrho}(\xi)}} |\nabla u|^p dx \right)^{\frac{p-1}{p}} \left(\fint_{{\Omega_{2\varrho}(\xi)}} |\nabla g|^p dx \right)^{\frac{1}{p}}.
\end{align}
We can obtain the estimate for $\mathcal{S}_2$ in a similar way as follows
\begin{align}\label{est:S2}
\mathcal{S}_2 \le \frac{\varepsilon}{3} \fint_{{\Omega_{2\varrho}(\xi)}} |\nabla u - \nabla v|^p dx + \frac{\varepsilon}{3} \fint_{{\Omega_{2\varrho}(\xi)}} |\nabla g|^p dx + C \varepsilon^{\frac{1}{1-p}} \fint_{{\Omega_{2\varrho}(\xi)}} |\mathbf{F}|^p  dx.
\end{align}
There remains to estimate the last term $\mathcal{S}_3$, and here we use the Sobolev's inequality
\begin{align}\nonumber 
\left(\fint_{{\Omega_{2\varrho}(\xi)}}|u-w-g|^{p^*}dx\right)^{\frac{1}{p^*}} & = \left(\frac{1}{\mathcal{L}^n({\Omega_{2\varrho}(\xi)})}\int_{{\Omega_{2\varrho}(\xi)}}|u-w-g|^{p^*}dx\right)^{\frac{1}{p^*}} \\ \nonumber
& \le C\left(\frac{1}{\mathcal{L}^n({\Omega_{2\varrho}(\xi)})}\right)^{\frac{1}{p^*}} \left(\int_{{\Omega_{2\varrho}(\xi)}}|\nabla u- \nabla v - \nabla g|^{p}dx\right)^{\frac{1}{p}} \\ \label{ineq:Sobolev-2}
& = C\left(\mathcal{L}^n({\Omega_{2\varrho}(\xi)})\right)^{\frac{1}{p} - \frac{1}{p^*}} \left(\fint_{{\Omega_{2\varrho}(\xi)}}|\nabla u - \nabla v - \nabla g|^{p}dx\right)^{\frac{1}{p}},
\end{align}
for all $1 \le p^* \le \frac{np}{n-p}$. Then, H{\"o}lder's inequality and~\eqref{ineq:Sobolev-2} are applied with $p^* = p$, one has
\begin{align}\nonumber
\mathcal{S}_3 &\le   \left(\fint_{{\Omega_{2\varrho}(\xi)}} |u-w-g|^{p}dx\right)^{\frac{1}{p}} \left(\fint_{{\Omega_{2\varrho}(\xi)}} |f|^{\frac{p}{p-1}} dx\right)^{1 - \frac{1}{p}} \\ \nonumber
& \le C \left(\fint_{{\Omega_{2\varrho}(\xi)}} |\nabla u - \nabla v - \nabla g|^{p} dx\right)^{\frac{1}{p}} \left(\fint_{{\Omega_{2\varrho}(\xi)}} |f|^{\frac{p}{p-1}} dx\right)^{1 - \frac{1}{p}}   \\ \nonumber
& \le \frac{\varepsilon}{3} \fint_{{\Omega_{2\varrho}(\xi)}} 2^{-p}  |\nabla u - \nabla v - \nabla g|^p  dx +  C \varepsilon^{\frac{1}{1-p}} \fint_{{\Omega_{2\varrho}(\xi)}} |f|^{\frac{p}{p-1}} dx \\ \label{est:S_3}
& \le \frac{\varepsilon}{3} \fint_{{\Omega_{2\varrho}(\xi)}} |\nabla u - \nabla v|^p  dx + \frac{\varepsilon}{3} \fint_{{\Omega_{2\varrho}(\xi)}} |\nabla g|^p  dx +  C \varepsilon^{\frac{1}{1-p}} \fint_{{\Omega_{2\varrho}(\xi)}} |f|^{\frac{p}{p-1}} dx.
\end{align}
Combining~\eqref{est:S1},~\eqref{est:S2} and~\eqref{est:S_3}, one deduces from~\eqref{est:u-w-I} that
\begin{align}\nonumber
\fint_{{\Omega_{2\varrho}(\xi)}} {\Phi}(\nabla u,\nabla v) dx & \le C \left[\varepsilon \fint_{{\Omega_{2\varrho}(\xi)}} |\nabla u - \nabla v|^p dx  + \left( \varepsilon^{1-p} + \varepsilon^{\frac{1}{1-p}} \right) \fint_{{\Omega_{2\varrho}(\xi)}} |\mathcal{E}|^p dx  \right. \\ \label{est:G1}
& \qquad \left. + \left(\fint_{{\Omega_{2\varrho}(\xi)}} |\nabla u|^p dx \right)^{\frac{p-1}{p}} \left(\fint_{{\Omega_{2\varrho}(\xi)}} |\nabla g|^p dx \right)^{\frac{1}{p}} \right],
\end{align}
for all $\varepsilon \in (0,1)$. On the other hand, it is easily to see that if $p \ge 2$ there holds
\begin{align*}
|z_1 - z_2|^p = |z_1 - z_2|^{p-2}|z_1 -z_2| \le {\Phi}(z_1,z_2), \quad \forall z_1, \, z_2 \in \mathbb{R}^n,
\end{align*}
which implies from~\eqref{est:G1} that
\begin{align}\nonumber
\fint_{{\Omega_{2\varrho}(\xi)}} |\nabla u - \nabla v|^p dx & \le C \left[\varepsilon \fint_{{\Omega_{2\varrho}(\xi)}} |\nabla u - \nabla v|^p dx  + \left(\varepsilon^{1-p}+\varepsilon^{\frac{1}{1-p}}\right) \fint_{{\Omega_{2\varrho}(\xi)}} |\mathcal{E}|^p dx \right. \\ \label{est:G2}
& \qquad \left. + \left(\fint_{{\Omega_{2\varrho}(\xi)}} |\nabla u|^p dx \right)^{\frac{p-1}{p}} \left(\fint_{{\Omega_{2\varrho}(\xi)}} |\nabla g|^p dx \right)^{\frac{1}{p}} \right].
\end{align}
We can choose a fixed number $\varepsilon \in (0,1)$ such that $C\varepsilon < \frac{1}{2}$ in~\eqref{est:G2} to get
\begin{align}\nonumber
\fint_{{\Omega_{2\varrho}(\xi)}} |\nabla u - \nabla v|^p dx & \le C \left[\fint_{{\Omega_{2\varrho}(\xi)}} |\mathcal{E}|^p dx \right. \\ \nonumber
& \qquad \qquad \left. + \left(\fint_{{\Omega_{2\varrho}(\xi)}} |\nabla u|^p dx \right)^{\frac{p-1}{p}} \left(\fint_{{\Omega_{2\varrho}(\xi)}} |\nabla g|^p dx \right)^{\frac{1}{p}} \right],
\end{align} 
and due to Young's inequality, for all $\delta \in (0,1)$, there holds
\begin{align}\nonumber
\fint_{{\Omega_{2\varrho}(\xi)}} |\nabla u - \nabla v|^p dx & \le \delta\fint_{{\Omega_{2\varrho}(\xi)}} |\nabla u|^p dx + C \delta^{1-p} \fint_{{\Omega_{2\varrho}(\xi)}} |\mathcal{E}|^p dx.
\end{align}
This guarantees~\eqref{ineq:lem:u-w} with $\theta  = p$. Otherwise, if $1 <p <2$ then we may estimate $|z_1 - z_2|^p$ for all $z_1$, $z_2 \in \mathbb{R}^n$ as follows
\begin{align*}
|z_1 - z_2|^p = (|z_1|^2 + |z_2|^2)^{\frac{p(2-p)}{4}}[{\Phi}(z_1,z_2)]^{\frac{p}{2}} \le 4^p \left(|z_1|^p + |z_1 - z_2|^p\right)^{\frac{2-p}{2}} [{\Phi}(z_1,z_2)]^{\frac{p}{2}},
\end{align*}
which implies from H{\"o}lder's inequality that
\begin{align}\nonumber
\fint_{{\Omega_{2\varrho}(\xi)}} |\nabla u - \nabla v|^p dx & \le 4^p \fint_{{\Omega_{2\varrho}(\xi)}} (|\nabla u|^p + |\nabla u - \nabla v|^p)^{\frac{2-p}{2}} [{\Phi}(\nabla u,\nabla v)]^{\frac{p}{2}}  dx \\ \nonumber
& \le 4^p \left(\fint_{{\Omega_{2\varrho}(\xi)}} (|\nabla u|^p + |\nabla u - \nabla v|^p)dx\right)^{\frac{2-p}{2}} \left(\fint_{{\Omega_{2\varrho}(\xi)}} {\Phi}(\nabla u,\nabla v) dx\right)^{\frac{p}{2}}.
\end{align}
For every $\delta \in (0,1)$, applying Young's inequality on the right hand side of above inequality, we obtain that
\begin{align}\nonumber
\fint_{{\Omega_{2\varrho}(\xi)}} |\nabla u - \nabla v|^p dx & \le \frac{\delta}{4} \fint_{{\Omega_{2\varrho}(\xi)}} (|\nabla u|^p + |\nabla u - \nabla v|^p)dx \\ \nonumber
& \hspace{3cm} + C \delta^{\frac{p-2}{p}} \fint_{{\Omega_{2\varrho}(\xi)}} {\Phi}(\nabla u,\nabla v) dx,
\end{align}
which deduces from~\eqref{est:G1} that
\begin{align}\nonumber
\fint_{{\Omega_{2\varrho}(\xi)}} |\nabla u - \nabla v|^p dx & \le \frac{\delta}{4} \fint_{{\Omega_{2\varrho}(\xi)}} |\nabla u|^p dx + \left(\frac{\delta}{4} + C \delta^{\frac{p-2}{p}} \varepsilon\right) \fint_{{\Omega_{2\varrho}(\xi)}} |\nabla u - \nabla v|^p dx \\ \nonumber
& \qquad +   C  \delta^{\frac{p-2}{p}} \left(\varepsilon^{1-p}+ \varepsilon^{\frac{1}{1-p}}\right) \fint_{{\Omega_{2\varrho}(\xi)}} |\mathcal{E}|^p dx \\ \label{est:G3a}
& \qquad +  C\delta^{\frac{p-2}{p}} \left(\varepsilon \fint_{{\Omega_{2\varrho}(\xi)}} |\nabla u|^p dx \right)^{\frac{p-1}{p}} \left(\varepsilon^{1-p}\fint_{{\Omega_{2\varrho}(\xi)}} |\nabla g|^p dx \right)^{\frac{1}{p}}.
\end{align}
Remark that for $\varepsilon \in (0,1)$ and $p \in (1,2)$ then one has $\varepsilon^{1-p} \le \varepsilon^{\frac{1}{1-p}}$. Applying H{\"o}lder's inequality for the last term on the right hand side of~\eqref{est:G3a}, one gets that
\begin{align}\nonumber
\fint_{{\Omega_{2\varrho}(\xi)}} |\nabla u - \nabla v|^p dx & \le \left(\frac{\delta}{4} + C \delta^{\frac{p-2}{p}} \varepsilon\right) \fint_{{\Omega_{2\varrho}(\xi)}} |\nabla u|^p dx \\ \nonumber
& \qquad + \left(\frac{\delta}{4} + C \delta^{\frac{p-2}{p}} \varepsilon\right) \fint_{{\Omega_{2\varrho}(\xi)}} |\nabla u - \nabla v|^p dx \\ \label{est:G3}
& \qquad +   C  \delta^{\frac{p-2}{p}} \varepsilon^{\frac{1}{1-p}} \fint_{{\Omega_{2\varrho}(\xi)}} |\mathcal{E}|^p dx.
\end{align}
For every $\delta \in (0,1)$, by taking $\varepsilon = (4C)^{-1} \delta^{\frac{2}{p}}$ in~\eqref{est:G3}, one obtains~\eqref{ineq:lem:u-w} which completes the proof.
\end{proof}

In the following lemma, by adding an extra condition, in which the nonlinear operator $\mathbb{A}$ satisfies the small BMO condition as in \eqref{cond:BMO}, we shall prove the comparison estimate between the weak solution $u$ to~\eqref{eq:main} and solution $w$ (take a note that we consider one further reference problem in \eqref{eq:w-I2} admits solution $w$).

\begin{lemma}\label{lem:B1}
Let $v$ be the unique solution to~\eqref{eq:I1} 
and $w$ be the unique solution to the following equation 
\begin{equation}\label{eq:w-I2}
\begin{cases} \mathrm{div} \left( \overline{\mathbb{A}}_{\Omega_{\varrho}(\xi)}(\nabla w)\right) & = \ 0, \ \mbox{ in } \Omega_{\varrho}(\xi),\\ 
\hspace{1.2cm} w & = \ v, \ \mbox{ on } \partial \Omega_{\varrho}(\xi).\end{cases}
\end{equation}
Assume that $[\mathbb{A}]_{\tilde{p}}^{\varrho} < \infty$ for $\tilde{p} = \frac{p\gamma}{\gamma - 1}$, where $\gamma$ is given in Lemma~\ref{lem:u-w}. Then there exists a constant $C = C(n,p,\Lambda_1,\Lambda_2)$ such that 
\begin{align}\label{eq:theoIc}
\|\nabla w\|^p_{L^{\infty}(\Omega_{\varrho/2}(\xi))} & \le C \left( \fint_{\Omega_{2\varrho}(\xi)} |\nabla u|^p  dx +    \fint_{\Omega_{2\varrho}(\xi)}  |\mathcal{E}|^p dx \right),  
\end{align}
and
\begin{multline}\label{eq:theoId}
\fint_{\Omega_{\varrho}(\xi)} |\nabla u - \nabla w|^pdx \le C\left( \left([\mathbb{A}]_{\tilde{p}}^{\varrho}\right)^p \fint_{\Omega_{2\varrho}(\xi)} |\nabla u|^p dx  + \left([\mathbb{A}]_{\tilde{p}}^{\varrho}\right)^{p(1-\theta )}  \fint_{\Omega_{2\varrho}(\xi)} |\mathcal{E}|^p dx \right).
\end{multline} 
\end{lemma}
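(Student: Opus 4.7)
\emph{Proof of \eqref{eq:theoIc}.} First I would observe that $\overline{\mathbb{A}}_{\Omega_\varrho(\xi)}(\cdot)$ is $x$-independent and still verifies \eqref{cond:A1}--\eqref{cond:A2} with the same $\Lambda_1,\Lambda_2$, since growth and monotonicity survive averaging. Under~\eqref{hyp:P} the boundary Lipschitz theorem of Lieberman--Kilpel\"ainen--Koskela type for the homogeneous equation~\eqref{eq:w-I2} gives
\[ \|\nabla w\|_{L^\infty(\Omega_{\varrho/2}(\xi))}^p \le C \fint_{\Omega_\varrho(\xi)} |\nabla w|^p\,dx. \]
A Caccioppoli-type energy estimate for~\eqref{eq:w-I2}, obtained by testing with $\varphi = w-v \in W_0^{1,p}(\Omega_\varrho(\xi))$, then yields $\fint_{\Omega_\varrho(\xi)} |\nabla w|^p \le C \fint_{\Omega_\varrho(\xi)} |\nabla v|^p$. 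Coupling this with the pointwise bound $|\nabla v|^p \le C(|\nabla u|^p + |\nabla u - \nabla v|^p)$ and Lemma~\ref{lem:u-w} applied on $\Omega_{2\varrho}(\xi)$ with a fixed small $\delta$ reduces the right-hand side to $C(\fint_{\Omega_{2\varrho}(\xi)}|\nabla u|^p + \fint_{\Omega_{2\varrho}(\xi)}|\mathcal{E}|^p)$, which is~\eqref{eq:theoIc}.

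\emph{Proof of \eqref{eq:theoId}.} I would triangulate $\nabla u - \nabla w = (\nabla u - \nabla v) + (\nabla v - \nabla w)$. For the first summand, Lemma~\ref{lem:u-w} applied with the specific choice $\delta = ([\mathbb{A}]_{\tilde{p}}^\varrho)^p$ already delivers the two coefficients $([\mathbb{A}]_{\tilde{p}}^\varrho)^p$ and $([\mathbb{A}]_{\tilde{p}}^\varrho)^{p(1-\theta)}$ that appear in~\eqref{eq:theoId}. For the second summand, I subtract the variational formulations of~\eqref{eq:I1} and~\eqref{eq:w-I2} and test with $\varphi = v - w \in W_0^{1,p}(\Omega_\varrho(\xi))$; monotonicity of $\overline{\mathbb{A}}$ combined with the pointwise bound $|\mathbb{A}(x,\nabla v) - \overline{\mathbb{A}}(\nabla v)| \le h(x)|\nabla v|^{p-1}$, where $h$ is the BMO oscillation sup from Definition~\ref{def:BMOcond}, gives
\[ \fint_{\Omega_\varrho(\xi)} \Phi(\nabla v,\nabla w)\,dx \le C \fint_{\Omega_\varrho(\xi)} h(x)\, |\nabla v|^{p-1}\, |\nabla v - \nabla w|\,dx. \]
For $p \ge 2$, the elementary inequality $|\nabla v - \nabla w|^p \le \Phi(\nabla v,\nabla w)$, together with a Young absorption and a Hölder decomposition in which $|\nabla v|$ enters with exponent $p\gamma$ (then collapsed down to $L^p$ via the reverse Hölder inequality~\eqref{ineq:RH}), bounds the right side by $C([\mathbb{A}]_{\tilde{p}}^\varrho)^p \fint_{\Omega_{2\varrho}(\xi)}|\nabla v|^p$; the choice $\tilde{p} = p\gamma/(\gamma-1)$ is exactly the exponent that closes the Hölder chain. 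For $1 < p < 2$ I would insert the interpolation $|z_1 - z_2|^p \le 4^p(|z_1|^p + |z_1 - z_2|^p)^{(2-p)/2}\,\Phi(z_1,z_2)^{p/2}$ precisely as in the proof of Lemma~\ref{lem:u-w}, absorb the $|\nabla v - \nabla w|^p$ term, and repeat the Hölder--Gehring argument. Finally, Lemma~\ref{lem:u-w} once more converts $\fint_{\Omega_{2\varrho}(\xi)} |\nabla v|^p$ into $C(\fint_{\Omega_{2\varrho}(\xi)}|\nabla u|^p + \fint_{\Omega_{2\varrho}(\xi)}|\mathcal{E}|^p)$.

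\emph{Main obstacle.} The subquadratic regime $1 < p < 2$ is the technical bottleneck: $\Phi$ no longer dominates $|\nabla v - \nabla w|^p$ directly, so the interpolation-plus-absorption device must be calibrated carefully so that the final power of $[\mathbb{A}]_{\tilde{p}}^\varrho$ agrees with the super-quadratic case. A secondary bookkeeping subtlety is reconciling the BMO exponent $\tilde{p}$ with the Gehring self-improvement exponent $\gamma$ from~\eqref{ineq:RH}: one must verify that Hölder's inequality, applied to the three factors $h$, $|\nabla v|^{p-1}$, $|\nabla v - \nabla w|$, closes precisely at $\tilde{p} = p\gamma/(\gamma-1)$, which is what dictates the exact form of the hypothesis in the lemma.
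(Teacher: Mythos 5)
Your overall architecture is the same as the paper's: split $\nabla u-\nabla w=(\nabla u-\nabla v)+(\nabla v-\nabla w)$, control $\nabla u-\nabla v$ by Lemma~\ref{lem:u-w} with $\delta$ tuned to a power of $[\mathbb{A}]_{\tilde p}^{\varrho}$, and control $w$ through an $L^\infty$ bound plus an energy comparison with $v$. The difference is that the paper does not reprove the frozen-coefficient estimates at all: it imports \eqref{eq:lem2b}, \eqref{eq:lem2a} and the bound $\|\nabla w\|^p_{L^\infty(\Omega_{\varrho/2}(\xi))}\le C\fint|\nabla w|^p$ from \cite{MP11} and only performs the bookkeeping with Lemma~\ref{lem:u-w} (choosing $\varepsilon=([\mathbb{A}]_{\tilde p}^{\varrho})^p$, $\delta=\varepsilon^{\theta/(\theta-1)}$). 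You attempt to re-derive those imported estimates, and it is exactly there that your proposal has genuine gaps.

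First, the $L^\infty$ gradient bound for $w$ is not a consequence of \eqref{hyp:P}: $p$-capacity uniform thickness of the complement yields boundary H\"older continuity of solutions and the Gehring-type higher integrability \eqref{ineq:RH}, but not boundedness of $\nabla w$ up to $\partial\Omega$ (a Lipschitz domain with a reentrant corner satisfies \eqref{hyp:P} while the gradient of the homogeneous solution blows up there); there is no ``Lieberman--Kilpel\"ainen--Koskela boundary Lipschitz theorem'' under \eqref{hyp:P}. The estimate you need is the interior one when $B_{c\varrho}(\xi)\subset\Omega$, and near the boundary it is precisely the Reifenberg-flat/frozen-coefficient estimate of \cite{MP11} (this lemma is only used in the Reifenberg track, Lemma~\ref{lem:B2}). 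Second, your derivation of the $v$--$w$ comparison does not produce the stated exponent. Testing with $v-w$ and using $|\nabla v-\nabla w|^p\le C\Phi(\nabla v,\nabla w)$ ($p\ge2$), the triple H\"older you describe with $|\nabla v|$ in $L^{\gamma p}$ only ``closes precisely'' at $\tilde p=p\gamma/(\gamma-1)$ when $p=2$ (for $p>2$ the reciprocals sum to less than one), and after absorption it yields
\[
\fint_{\Omega_{\varrho}(\xi)}|\nabla v-\nabla w|^p\,dx\le C\left([\mathbb{A}]_{\tilde p}^{\varrho}\right)^{\frac{p}{p-1}}\fint_{\Omega_{2\varrho}(\xi)}|\nabla v|^p\,dx,
\]
i.e.\ the power $p/(p-1)$ (a weighted Young using the full structure of $\Phi$ gives power $2$), not the power $p$ claimed in \eqref{eq:lem2a} and needed for \eqref{eq:theoId} as stated; and for $1<p<2$ you acknowledge the calibration is open rather than carrying it out. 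So, as written, your argument would prove \eqref{eq:theoId} only with weaker exponents of $[\mathbb{A}]_{\tilde p}^{\varrho}$; to get the lemma as stated you must either quote \cite{MP11} as the paper does or supply a genuinely sharper comparison argument. (Your remaining steps are fine: monotonicity and growth do survive averaging, the energy comparison via testing with $w-v$ is correct, and the final conversion of $\fint|\nabla v|^p$ into $\fint|\nabla u|^p+\fint|\mathcal{E}|^p$ with a fixed $\delta$ is acceptable since $[\mathbb{A}]_{\tilde p}^{\varrho}\le 2\Lambda_1$.)
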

\begin{proof}
We firstly refer the reader to~\cite{MP11} for the proof of two following inequalities
\begin{equation}\label{eq:lem2b}
C^{-1}\fint_{\Omega_{2\varrho}(\xi)} |\nabla w|^p dx \le \fint_{\Omega_{2\varrho}(\xi)} |\nabla v|^p dx \le C \fint_{\Omega_{2\varrho}(\xi)} |\nabla w|^p dx,
\end{equation}
and
\begin{equation}\label{eq:lem2a}
\fint_{\Omega_{\varrho}(\xi)} |\nabla v - \nabla w|^p dx \le C \left([\mathbb{A}]^{\varrho}_{\tilde{p}}\right)^p \fint_{\Omega_{2\varrho}(\xi)} |\nabla v|^p dx.
\end{equation}
Thanks to~\eqref{eq:lem2b} and~\eqref{ineq:lem:u-w} in Lemma~\ref{lem:u-w}, for all $\delta \in (0,1)$ there holds
\begin{align} \nonumber
\|\nabla w\|^p_{L^{\infty}(\Omega_{\varrho/2}(\xi))} &\le C \fint_{\Omega_{2\varrho}(\xi)} |\nabla w|^p dx   \le C \fint_{\Omega_{2\varrho}(\xi)} |\nabla v|^p dx \\ \nonumber  
& \le C \left( \fint_{\Omega_{2\varrho}(\xi)} |\nabla u|^p  dx +  \fint_{\Omega_{2\varrho}(\xi)} |\nabla u - \nabla v|^p dx \right) \\ \label{eq:theoId_1}
& \le C \left( \fint_{\Omega_{2\varrho}(\xi)} |\nabla u|^p  dx +   \delta^{1-\theta } \fint_{\Omega_{2\varrho}(\xi)}  |\mathcal{E}|^p dx \right),
\end{align}
which guarantees~\eqref{eq:theoIc} by choosing a fixed $\delta$. On the other hand, from~\eqref{eq:lem2a} one gets that
\begin{align*}
\fint_{\Omega_{\varrho}(\xi)} |\nabla u - \nabla w|^p dx &\le C\left( \fint_{\Omega_{\varrho}(\xi)} |\nabla u - \nabla v|^p dx +  \fint_{\Omega_{\varrho}(\xi)} |\nabla v - \nabla w|^p dx \right) \\
& \le C\left( \fint_{\Omega_{2\varrho}(\xi)} |\nabla u - \nabla v|^p dx + \left([\mathbb{A}]_{\tilde{p}}^{\varrho}\right)^p \fint_{\Omega_{2\varrho}(\xi)} |\nabla v|^p dx \right),
\end{align*}
which deduces from~\eqref{eq:theoId_1} and~\eqref{ineq:lem:u-w} in Lemma~\ref{lem:u-w} that
\begin{align}\nonumber
\fint_{\Omega_{\varrho}(\xi)} |\nabla u - \nabla w|^p dx & \le C\left( \fint_{\Omega_{2\varrho}(\xi)} |\nabla u - \nabla v|^p dx + \left([\mathbb{A}]_{\tilde{p}}^{\varrho}\right)^p \fint_{\Omega_{2\varrho}(\xi)} |\nabla u|^p dx \right. \\ \nonumber
& \left. \qquad \qquad + \left([\mathbb{A}]_{\tilde{p}}^{\varrho}\right)^p \delta^{1-\theta } \fint_{\Omega_{2\varrho}(\xi)} |\mathcal{E}|^p dx \right) \\ \nonumber
& \le C\left( \left[\varepsilon + \left([\mathbb{A}]_{\tilde{p}}^{\varrho}\right)^p\right] \fint_{\Omega_{2\varrho}(\xi)} |\nabla u|^p dx \right. \\ \label{est:uw} & \left. \qquad \qquad + \left[ \varepsilon^{1-\theta } + \left([\mathbb{A}]_{\tilde{p}}^{\varrho}\right)^p \delta^{1-\theta }\right] \fint_{\Omega_{2\varrho}(\xi)} |\mathcal{E}|^p dx \right),
\end{align}
for all $\varepsilon$, $\delta \in (0,1)$. Let us take $\varepsilon = \left([\mathbb{A}]_{\tilde{p}}^{\varrho}\right)^p$ and $\delta = \varepsilon^{\frac{\theta }{\theta  - 1}}$ in~\eqref{est:uw}, we obtain~\eqref{eq:theoId}.
\end{proof}

\section{Level-set inequalities}\label{sec:levelset}
We emphasize that the key technique to prove the good-$\lambda$ inequalities for measuring sets in both Theorem~\ref{theo:good-lambda} and~\ref{theo:good-lambda-Rf} is based on applying the following well-known lemma. This lemma has its own role in measure theory, that colloquially discussed as the substitution of Calder\'on-Zygmund-Krylov-Safonov decomposition. This lemma is a version of Calder\'on-Zygmund (or Vitali type) covering lemma that allows us to work with balls instead of cubes, see~\cite[Lemma 4.2]{CC1995} or~\cite{Vitali08}.

The problem actually has two distinct hypotheses to consider, under \eqref{hyp:P} and \eqref{hyp:R}. Therefore, we state here the ``sandwich'' version combined both hypotheses on $\Omega$, for the sake of simplicity. Here we refer the interested reader to \cite{Phuc2015, SSB2, BW2, SSB4,Phuc1, Phuc2, 55QH4,MPT2018,PNCCM,MPTNsub,PNJDE,PNnonuniform}, where these assumptions are independently performed.

\begin{lemma}[Covering Lemma]\label{lem:mainlem}
Let $\Omega \subset \mathbb{R}^n$ be a bounded domain satisfying one of the two following assumptions:
\begin{itemize}
\item[(i)] Hypothesis \eqref{hyp:P} with two constants $c_0$, $r_0>0$;
\item[(ii)] Hypothesis \eqref{hyp:R}: $\Omega$ is a $(\delta,r_0)$-Reifenberg flat domain with $\delta>0$ small enough, for some $r_0>0$.
\end{itemize}

Let $\varepsilon \in (0,1)$ and two measurable sets $\mathcal{V}\subset \mathcal{W} \subset\Omega$ satisfying $\mathcal{L}^n\left(\mathcal{V}\right) \le \varepsilon \mathcal{L}^n\left(B_{R}(0)\right)$ for some $R \in \left(0,\displaystyle{\frac{r_0}{20}}\right]$ (even if not concentric to $B_R$, by translating we can still assume with the ball centered at the origin). Suppose that for all $\xi \in \Omega$ and $\varrho \in (0,R]$, if $\mathcal{L}^n\left(\mathcal{V} \cap B_{\varrho}(\xi)\right) > \varepsilon \mathcal{L}^n\left(B_{\varrho}(\xi)\right)$ then $\Omega_{\varrho}(\xi)  \subset \mathcal{W}$. Then there exists a constant $C=C(n)>0$ such that $\mathcal{L}^n\left(\mathcal{V}\right)\leq C \varepsilon \mathcal{L}^n\left(\mathcal{W}\right)$.
\end{lemma}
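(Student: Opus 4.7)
The plan is a Vitali / Calder\'on--Zygmund--Krylov--Safonov stopping-time argument. For almost every Lebesgue density point $\xi \in \mathcal{V}$ I would introduce the stopping radius
\begin{equation*}
r_\xi := \sup\Bigl\{r>0 : \mathcal{L}^n(\mathcal{V}\cap B_r(\xi)) > \varepsilon\, \mathcal{L}^n(B_r(\xi))\Bigr\}.
\end{equation*}
By Lebesgue differentiation the density of $\mathcal{V}$ at $\xi$ equals $1>\varepsilon$, so $r_\xi>0$. The smallness $\mathcal{L}^n(\mathcal{V}) \le \varepsilon \mathcal{L}^n(B_R(0))$ forces $r_\xi\le R$, since for any $r\ge R$ one has $\mathcal{L}^n(\mathcal{V}\cap B_r(\xi))\le \mathcal{L}^n(\mathcal{V})\le \varepsilon \mathcal{L}^n(B_r(\xi))$. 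In particular $\mathcal{L}^n(\mathcal{V}\cap B_r(\xi))\le \varepsilon \mathcal{L}^n(B_r(\xi))$ for every $r>r_\xi$, while the supremum definition ensures that for each $\eta>0$ there exists $r'\in(r_\xi-\eta,r_\xi]$ with strict inequality, which will be used to invoke the standing hypothesis.

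Next I would apply the $5r$-Vitali covering lemma to the uniformly bounded family $\{B_{r_\xi}(\xi)\}_{\xi\in\mathcal{V}}$, extracting a countable pairwise disjoint subcollection $\{B_{r_i}(\xi_i)\}_{i\ge 1}$ (with $r_i=r_{\xi_i}\le R$) such that $\mathcal{V}\subset\bigcup_i B_{5r_i}(\xi_i)$ modulo a null set. The assumption $R\le r_0/20$ guarantees $5r_i\le r_0/4$, keeping every relevant scale strictly within the range where \eqref{hyp:P} or \eqref{hyp:R} applies.

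Combining the two consequences of the stopping-time definition: from $5r_i>r_i$ and maximality,
\begin{equation*}
\mathcal{L}^n(\mathcal{V}\cap B_{5r_i}(\xi_i)) \le \varepsilon\, \mathcal{L}^n(B_{5r_i}(\xi_i)),
\end{equation*}
while the strict-inequality perturbation at some $r'_i\le r_i$ followed by letting $r'_i\uparrow r_i$ yields $\Omega_{r_i}(\xi_i)\subset \mathcal{W}$ from the hypothesis. Summing over $i$ gives
\begin{equation*}
\mathcal{L}^n(\mathcal{V}) \le \sum_i \mathcal{L}^n(\mathcal{V}\cap B_{5r_i}(\xi_i)) \le 5^n\varepsilon \sum_i \mathcal{L}^n(B_{r_i}(\xi_i)).
\end{equation*}

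The final and most delicate step is to convert $\sum_i \mathcal{L}^n(B_{r_i}(\xi_i))$ into a multiple of $\mathcal{L}^n(\mathcal{W})$ via disjointness and the inclusion $\Omega_{r_i}(\xi_i)\subset\mathcal{W}$. For this one needs a uniform interior measure-density estimate $\mathcal{L}^n(\Omega_{r_i}(\xi_i)) \ge c\, \mathcal{L}^n(B_{r_i}(\xi_i))$ for $\xi_i\in\Omega$ and $r_i\le r_0$, with $c>0$ depending only on $n$ and the structural parameters. Under \eqref{hyp:R} this is immediate: for $\delta$ small each $B_{r_i}(\xi_i)$ is cut by a near-hyperplane at the closest boundary point, so at least a near-half-ball remains inside $\Omega$. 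Under \eqref{hyp:P} it follows from the geometric measure-density property that accompanies uniform $p$-thickness of $\mathbb{R}^n\setminus\Omega$ at scales $\le r_0$, implicit in the remark following Definition~\ref{def:pcapa}. Granted this, disjointness of the $B_{r_i}(\xi_i)$ gives
\begin{equation*}
\sum_i \mathcal{L}^n(B_{r_i}(\xi_i)) \le c^{-1}\sum_i \mathcal{L}^n(\Omega_{r_i}(\xi_i)) \le c^{-1}\sum_i \mathcal{L}^n(\mathcal{W}\cap B_{r_i}(\xi_i)) \le c^{-1}\mathcal{L}^n(\mathcal{W}),
\end{equation*}
closing the estimate with $C=5^n/c$. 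The main obstacle I anticipate is precisely this interior measure-density step under \eqref{hyp:P}: namely, converting the capacity-theoretic thickness of the complement into a uniform pointwise volume-density property for $\Omega$ near $\partial\Omega$, with constants depending only on $n$ and $c_0$ — standard in the literature but requiring careful invocation to ensure the final constant is genuinely dimensional.
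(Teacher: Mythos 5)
Your Vitali stopping-radius scheme is the standard route here, and it is in fact all the paper itself offers: the lemma is not proved in the text but attributed to \cite[Lemma 4.2]{CC1995} and \cite{Vitali08}, so the comparison is with that classical argument, which your plan reproduces. The purely measure-theoretic steps are correct: $0<r_\xi\le R$ by Lebesgue differentiation and the smallness of $\mathcal{L}^n(\mathcal{V})$, the $5r$-Vitali selection, the bound $\mathcal{L}^n(\mathcal{V}\cap B_{5r_i}(\xi_i))\le\varepsilon\,\mathcal{L}^n(B_{5r_i}(\xi_i))$ by maximality of $r_i$, the inclusion $\Omega_{r_i}(\xi_i)\subset\mathcal{W}$ obtained by exhausting $B_{r_i}(\xi_i)$ with radii $r'<r_i$ at which the hypothesis applies, and the reduction of the whole lemma to the interior density estimate $\mathcal{L}^n(\Omega\cap B_r(\xi))\ge c\,\mathcal{L}^n(B_r(\xi))$ for $\xi\in\Omega$ and $r\le r_0$. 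Under hypothesis \eqref{hyp:R} your justification of that estimate (a near-half-ball survives when $\delta$ is small) is correct, and the argument closes with $C=5^n/c$.

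The gap is case (i), and it is precisely the step you flagged as delicate. Uniform $p$-capacity thickness is a condition on $\mathbb{R}^n\setminus\Omega$ and yields no lower bound on the volume density of $\Omega$ itself; the implication you invoke (a ``geometric measure-density property that accompanies uniform $p$-thickness'') is false in general. For example, the planar domain $\Omega=\{(x,y):\,0<x<1,\ 0<y<e^{-1/x}\}$ has complement containing a half-plane near the cusp tip, hence uniformly $p$-thick for every $p$ (it even satisfies an \emph{exterior} corkscrew condition, which is what the remark after Definition~\ref{def:pcapa} records), yet $\mathcal{L}^n(\Omega\cap B_r(0,0))/\mathcal{L}^n(B_r(0,0))\to 0$ as $r\to 0$, so no density constant $c=c(n,c_0)$ exists; the Wiener-type boundary regularity mentioned in the paper is likewise a capacity statement and gives no volume information. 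Consequently your final inequality $\sum_i\mathcal{L}^n(B_{r_i}(\xi_i))\le c^{-1}\mathcal{L}^n(\mathcal{W})$ is unjustified under \eqref{hyp:P} as argued: to close case (i) one must either bring in an interior measure-density property of $\Omega$ as a separate input (as is implicitly available for Lipschitz, NTA or Reifenberg-type domains in the cited literature) or replace this step by a different argument; it cannot be extracted from the capacity condition alone, and without it the constant cannot be purely dimensional either.
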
 

Notice that in the statement of Lemma \ref{lem:mainlem}, two hypotheses (i) and (ii) above are considered separately and independently, so we still use the letter $r_0$ in two appropriate conditions here with one symbol, to maintain simplicity and avoid confusion.

In order to check the the hypotheses of this covering lemma, we separate to several small steps and the following series of lemmas are essential.
\begin{lemma}\label{lem:A1} 
Let $\alpha \in [0,n)$ and $z_1 \in \Omega$ satisfying ${\mathbf{M}}_{\alpha}(|\mathcal{E}|^p)(z_1) \le \lambda_1$. Then for every $R>0$, there exists a constant $C = C(n,\alpha,\mathrm{diam}(\Omega)/R)>0$ such that 
\begin{align}\label{ineq:A1}
\mathcal{L}^n\left(\left\{\zeta \in \Omega: \  {\mathbf{M}}_{\alpha}(|\nabla u|^p)(\zeta)>\lambda_2 \right\} \right) \le C\left(\frac{\lambda_1}{\lambda_2}\right)^{\frac{n}{n-\alpha}} \mathcal{L}^n\left(B_R(0)\right), 
\end{align}
for all $\lambda_2>0$.
\end{lemma}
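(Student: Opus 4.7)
The plan is to combine the three ingredients already available to us: the weak-type boundedness of $\mathbf{M}_\alpha$ at the $L^1$ endpoint (Lemma~1.12 with $s=1$), the global Lebesgue-level energy estimate $\|\nabla u\|_{L^p(\Omega)}^p\leq C\|\mathcal{E}\|_{L^p(\Omega)}^p$ (Lemma~2.1), and the pointwise hypothesis $\mathbf{M}_\alpha(|\mathcal{E}|^p)(z_1)\leq\lambda_1$. Since $\alpha\in[0,n)$, Lemma~1.12 with $s=1$ gives, for every nonnegative $h\in L^1(\mathbb{R}^n)$,
\begin{equation*}
\mathcal{L}^n\!\left(\left\{\zeta\in\mathbb{R}^n:\mathbf{M}_\alpha h(\zeta)>\lambda_2\right\}\right)\leq C(n,\alpha)\left(\frac{\|h\|_{L^1(\mathbb{R}^n)}}{\lambda_2}\right)^{\frac{n}{n-\alpha}}.
\end{equation*}

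Applying this to $h=|\nabla u|^p$, understood to be extended by zero outside $\Omega$, and using Lemma~2.1, I immediately obtain
\begin{equation*}
\mathcal{L}^n\!\left(\left\{\zeta\in\Omega:\mathbf{M}_\alpha(|\nabla u|^p)(\zeta)>\lambda_2\right\}\right)\leq C(n,p,\Lambda_1,\Lambda_2,\alpha)\left(\frac{\|\mathcal{E}\|_{L^p(\Omega)}^p}{\lambda_2}\right)^{\frac{n}{n-\alpha}}.
\end{equation*}

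Next I would absorb $\|\mathcal{E}\|_{L^p(\Omega)}^p$ using the hypothesis at $z_1$. Because $z_1\in\Omega$, every point of $\Omega$ lies within distance $\mathrm{diam}(\Omega)$ of $z_1$, so $\Omega\subseteq B_{\mathrm{diam}(\Omega)}(z_1)$. Testing the definition of $\mathbf{M}_\alpha(|\mathcal{E}|^p)(z_1)$ with the radius $\varrho=\mathrm{diam}(\Omega)$ and using the hypothesis yields
\begin{equation*}
\int_\Omega|\mathcal{E}|^p\,dx\leq\int_{B_{\mathrm{diam}(\Omega)}(z_1)}|\mathcal{E}|^p\,dx\leq\lambda_1\,\mathrm{diam}(\Omega)^{n-\alpha}\,\omega_n,
\end{equation*}
where $\omega_n=\mathcal{L}^n(B_1(0))$. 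Substituting this into the previous inequality and rewriting $\mathrm{diam}(\Omega)^n=(\mathrm{diam}(\Omega)/R)^n\,R^n$ so that the factor $R^n$ combines with $\omega_n$ to produce $\mathcal{L}^n(B_R(0))$, I arrive exactly at~\eqref{ineq:A1} with a constant depending only on $n$, $\alpha$, $p$, $\Lambda_1$, $\Lambda_2$, and the ratio $\mathrm{diam}(\Omega)/R$.

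There is no substantive obstacle here: the statement is essentially a direct quantitative consequence of the weak-type $(1,n/(n-\alpha))$ bound for $\mathbf{M}_\alpha$, the a priori energy estimate, and the observation that a single pointwise control of $\mathbf{M}_\alpha(|\mathcal{E}|^p)$ at any point of $\Omega$ controls the full $L^1$ integral of $|\mathcal{E}|^p$ over $\Omega$ by choosing the test radius equal to the diameter. The only mild bookkeeping point is the conversion of the $\mathrm{diam}(\Omega)^n$ factor into the stated $\mathcal{L}^n(B_R(0))$ scaling, which introduces the dependence of $C$ on $\mathrm{diam}(\Omega)/R$ displayed in the lemma.
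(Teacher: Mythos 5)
Your proof is correct and follows essentially the same route as the paper: the weak-type bound of Lemma~\ref{lem:M_alpha} with $s=1$, the global energy estimate of Lemma~\ref{lem:global}, and the inclusion $\Omega\subset B_{\mathrm{diam}(\Omega)}(z_1)$ to convert the pointwise bound on $\mathbf{M}_\alpha(|\mathcal{E}|^p)$ at $z_1$ into a bound on $\int_\Omega|\mathcal{E}|^p\,dx$, followed by the same rescaling to $\mathcal{L}^n(B_R(0))$. The extra dependence of your constant on $p,\Lambda_1,\Lambda_2$ is inherited from Lemma~\ref{lem:global} and is equally present in the paper's argument, so it is not a discrepancy in substance.
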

\begin{proof}
Thanks to Lemma~\ref{lem:M_alpha}, one gets that
\begin{align}\nonumber 
\mathcal{L}^n\left(\left\{\zeta \in \Omega: \  {\mathbf{M}}_{\alpha}(|\nabla u|^p)(\zeta)>\lambda_2 \right\} \right)  \le C \left(\frac{1}{\lambda_2}\int_{\Omega}{|\nabla u|^p dx}\right)^{\frac{n}{n-\alpha}},
\end{align}
which follows from the global estimate~\eqref{eq:prop1} in Lemma~\ref{lem:global} that
\begin{align}\label{est-3.2}
\mathcal{L}^n\left(\left\{\zeta \in \Omega: \  {\mathbf{M}}_{\alpha}(|\nabla u|^p)(\zeta)>\lambda_2 \right\} \right) \le C\left(\frac{1}{\lambda_2}\int_{\Omega}{|\mathcal{E}|^p dx}\right)^{\frac{n}{n-\alpha}}.
\end{align}
Let us denote the ball $Q : =  B_{D_0}(z_1)$ with $D_0 = \mathrm{diam}(\Omega)$. Recall that ${\mathbf{M}}_{\alpha}(|\mathcal{E}|^p)(z_1) \le \lambda_1$ and $\Omega \subset Q$, it implies from~\eqref{est-3.2} that
\begin{align*}
\mathcal{L}^n\left(\left\{\zeta \in \Omega: \  {\mathbf{M}}_{\alpha}(|\nabla u|^p)(\zeta)>\lambda_2 \right\} \right)  & \le  C\left(\frac{\mathcal{L}^n(Q)}{\lambda_2}\fint_{Q}{|\mathcal{E}|^p dx}\right)^{\frac{n}{n-\alpha}} \\  
& \le C \left(\frac{\mathcal{L}^n(Q)}{\lambda_2} D_0^{-\alpha}  {\mathbf{M}}_{\alpha}(|\mathcal{E}|^p)(z_1) \right)^{\frac{n}{n-\alpha}} \\
&  \le C \left(\frac{\lambda_1}{\lambda_2}\right)^{\frac{n}{n-\alpha}} D_0^n.
\end{align*}
This leads to conclude that the following estimate holds
\begin{align}\nonumber
\mathcal{L}^n\left(\left\{\zeta \in \Omega: \  {\mathbf{M}}_{\alpha}(|\nabla u|^p)(\zeta)>\lambda_2 \right\} \right)  & \le C  \left(\frac{D_0}{R}\right)^n \left(\frac{\lambda_1}{\lambda_2}\right)^{\frac{n}{n-\alpha}} \mathcal{L}^n(B_R(0)),
\end{align}
which completes the proof of~\eqref{ineq:A1}.
\end{proof}
\begin{lemma}\label{lem:A2}
Let $\alpha \in [0,n)$ and $z_2 \in \Omega_{\varrho}(\xi)$ satisfying ${\mathbf{M}}_{\alpha}(|\nabla u|^p)(z_2) \le \lambda$. Then for all $\delta \ge 3^{n+1}$  there holds 
\begin{align}\nonumber
& \mathcal{L}^n \left(\left\{\zeta \in \Omega_{\varrho}(\xi): \ {\mathbf{M}}_{\alpha}(|\nabla u|^p)(\zeta) > \delta \lambda\right\} \right) \\ \label{eq:res11}
& \hspace{3cm} \le \mathcal{L}^n \left(\left\{\zeta \in \Omega_{\varrho}(\xi): \ {\mathbf{M}}_{\alpha}(\chi_{B_{2\varrho}(\xi)} |\nabla u|^p)(\zeta) > \delta \lambda \right\} \right).
\end{align}
\end{lemma}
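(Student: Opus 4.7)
The plan is to reduce the measure inequality to the pointwise implication: for every $\zeta \in \Omega_{\varrho}(\xi)$,
$$\mathbf{M}_{\alpha}(|\nabla u|^p)(\zeta) > \delta \lambda \quad \Longrightarrow \quad \mathbf{M}_{\alpha}(\chi_{B_{2\varrho}(\xi)}|\nabla u|^p)(\zeta) > \delta \lambda.$$
Once this is established, \eqref{eq:res11} follows from monotonicity of Lebesgue measure. To prove the implication I would split the supremum defining $\mathbf{M}_{\alpha}(|\nabla u|^p)(\zeta)$ at the natural scale $\varrho$: \emph{small radii} $r \le \varrho$ see the full gradient unaffected by the truncation, while \emph{large radii} $r > \varrho$ are controlled by the a priori bound $\mathbf{M}_{\alpha}(|\nabla u|^p)(z_2) \le \lambda$ at the nearby point $z_2$.

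For the small-radius range only the triangle inequality is needed: when $\zeta \in B_{\varrho}(\xi)$ and $r \le \varrho$, one has $B_r(\zeta) \subset B_{2\varrho}(\xi)$, so multiplication by $\chi_{B_{2\varrho}(\xi)}$ is inert on $B_r(\zeta)$; any such $r$ witnessing an exceedance for the left-hand side witnesses it equally for the right-hand side.

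For the large-radius range the key geometric observation is $|\zeta - z_2| \le 2\varrho < 2r$, so $B_r(\zeta) \subset B_{3r}(z_2)$. Comparing volumes yields
$$r^{\alpha} \fint_{B_r(\zeta)}|\nabla u|^p dx \le \frac{|B_{3r}(z_2)|}{|B_r(\zeta)|}\, r^{\alpha} \fint_{B_{3r}(z_2)}|\nabla u|^p dx = 3^{n-\alpha}(3r)^{\alpha} \fint_{B_{3r}(z_2)}|\nabla u|^p dx \le 3^{n-\alpha}\,\mathbf{M}_{\alpha}(|\nabla u|^p)(z_2) \le 3^{n}\lambda.$$
The hypothesis $\delta \ge 3^{n+1}$ then gives $3^{n}\lambda < \delta\lambda$, so no $r > \varrho$ can produce a ball average exceeding $\delta\lambda$. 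Consequently, any exceedance on the left must be realized by some $r \le \varrho$, and by the small-radius step the same $r$ delivers the exceedance on the right, completing the argument. There is no serious obstacle here: the only point to watch is that the threshold $3^{n+1}$ has to \emph{strictly} dominate the volume-doubling factor $3^{n}$ produced by comparing $B_r(\zeta)$ with $B_{3r}(z_2)$, which is precisely what the stated bound on $\delta$ encodes.
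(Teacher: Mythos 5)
Your argument is correct and follows essentially the same route as the paper: split the maximal function at the scale $\varrho$, note that for radii at most $\varrho$ the truncation by $\chi_{B_{2\varrho}(\xi)}$ is inert since $B_r(\zeta)\subset B_{2\varrho}(\xi)$, and control the large radii via $B_r(\zeta)\subset B_{3r}(z_2)$ with the volume factor $3^{n}$, which the hypothesis $\delta\ge 3^{n+1}$ strictly dominates. The only cosmetic difference is that the paper packages this as a decomposition of the level set (showing the large-radius piece has measure zero) rather than as a pointwise implication, which is equivalent.
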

\begin{proof}
For any $\zeta \in B_{\varrho}(\xi)$, it is easily to see that
\begin{align}\label{eq:res9}
{\mathbf{M}}_{\alpha}(|\nabla u|^p)(\zeta) \le \max \left\{ {\mathbf{M}}^{\varrho}_{\alpha}(|\nabla u|^p)(\zeta) ; \  \mathbf{T}^{\varrho}_{\alpha} (|\nabla u|^p)(\zeta) \right\},
\end{align}
where 
$${\mathbf{M}}^{\varrho}_{\alpha}(|\nabla u|^p)(\zeta) = \sup_{0 < \varrho' < \varrho} \ (\varrho')^{\alpha}{\fint_{B_{\varrho'}(\zeta)}{|\nabla u|^p dx}},$$
and 
$$ \mathbf{T}^{\varrho}_{\alpha} (|\nabla u|^p)(\zeta)  = \sup_{\varrho' \ge \varrho} \ (\varrho')^{\alpha}{\fint_{B_{\varrho'}(\zeta)}{|\nabla u|^p dx}}.$$
From~\eqref{eq:res9}, the measure on the left hand side of~\eqref{eq:res11} can be decomposed as follows
\begin{align}\nonumber
& \mathcal{L}^n \left(\left\{\zeta \in \Omega_{\varrho}(\xi): \ {\mathbf{M}}_{\alpha}(|\nabla u|^p)(\zeta) > \delta \lambda\right\} \right) \\ \nonumber
& \hspace{3cm} \le \mathcal{L}^n \left(\left\{\zeta \in \Omega_{\varrho}(\xi): \ {\mathbf{M}}^{\varrho}_{\alpha}(|\nabla u|^p)(\zeta) > \delta \lambda \right\} \right) \\ \label{eq:res9a}
& \hspace{5cm} +  \mathcal{L}^n \left(\left\{\zeta \in \Omega_{\varrho}(\xi): \ {\mathbf{T}}^{\varrho}_{\alpha}(|\nabla u|^p)(\zeta) > \delta \lambda \right\} \right).
\end{align}
Moreover, we can see that $B_{\varrho'}(\zeta) \subset B_{2\varrho}(\xi)$ for all $\varrho' \in (0,\varrho)$. Therefore, one has
\begin{align}\label{eq:res9b}
{\mathbf{M}}^{\varrho}_{\alpha}(|\nabla u|^p)(\zeta) = \sup_{0 < \varrho' < \varrho} \ (\varrho')^{\alpha}{\fint_{B_{\varrho'}(\zeta)}{\chi_{B_{2\varrho}(\xi)} |\nabla u|^p dx}} \le {\mathbf{M}}_{\alpha}(\chi_{B_{2\varrho}(\xi)} |\nabla u|^p)(\zeta).
\end{align}
On the other hand, since $B_{\varrho'}(\zeta) \subset B_{3\varrho'}(z_2)$ for all $\varrho' \ge \varrho$, it follows that
\begin{align}\nonumber
\mathbf{T}^{\varrho}_{\alpha} (|\nabla u|^p)(\zeta) & = \sup_{\varrho' \ge \varrho} \ (\varrho')^{\alpha}{\fint_{B_{\varrho'}(\zeta)}{|\nabla u|^p dx}} \\ \nonumber
& \le  \sup_{\varrho' \ge \varrho} \ (\varrho')^{\alpha} \frac{\mathcal{L}^n(B_{3\varrho'}(z_2))}{\mathcal{L}^n(B_{\varrho'}(\zeta))} {\fint_{B_{3\varrho'}(z_2)}{|\nabla u|^p dx}} \\ \nonumber
& \le 3^{n-\alpha} \sup_{\varrho' \ge \varrho} \ (3\varrho')^{\alpha} {\fint_{B_{3\varrho'}(z_2)}{|\nabla u|^p dx}} \\ \label{eq:res10}
& \le 3^n {\mathbf{M}}_{\alpha}(|\nabla u|^p)(z_2).
\end{align}
Thanks to assumption ${\mathbf{M}}_{\alpha}(|\nabla u|^p)(z_2) \le \lambda$ and estimate~\eqref{eq:res10}, we may conclude that
\begin{align}\label{eq:res10b}
\mathcal{L}^n \left(\left\{\zeta \in \Omega_{\varrho}(\xi): \ {\mathbf{T}}^{\varrho}_{\alpha}(|\nabla u|^p)(\zeta) > \delta \lambda \right\} \right) = 0, \mbox{ for all } \delta>3^n.
\end{align}
Finally, the proof of~\eqref{eq:res11} can be completed by taking into account~\eqref{eq:res9a},~\eqref{eq:res9b} and~\eqref{eq:res10b}.
\end{proof}
\begin{lemma}\label{lem:A3} 
Let $\alpha \in [0,n)$. One can find some constants $\vartheta$, $\kappa$ and $\varepsilon_0>0$ depending on $n$, $p$, $\Lambda_1$, $\Lambda_2$ and $\alpha$ such that if $z_1 \in \Omega_{\varrho}(\xi)$ and $z_2 \in B_{\varrho}(\xi)$ satisfying
\begin{align}\label{eq:x2}
{\mathbf{M}}_{\alpha}(|\nabla u|^p)(z_1) \le \lambda \ \mbox{ and } \
{\mathbf{M}}_{\alpha}(|\mathcal{E}|^p)(z_2) \le \varepsilon^{\kappa}\lambda,
\end{align}
then the following inequality
\begin{align}\label{eq:iigoal}
\mathcal{L}^n \left(\left\{\zeta \in \Omega_{\varrho}(\xi): \ {\mathbf{M}}_{\alpha}(\chi_{B_{2\varrho}(\xi)} |\nabla u|^p)(\zeta)> \varepsilon^{-\vartheta}\lambda \right\} \right) \le \varepsilon\mathcal{L}^n\left(B_{\varrho}(\xi)\right),
\end{align}
holds for all $\lambda>0$ and $\varepsilon \in (0,\varepsilon_0)$.
\end{lemma}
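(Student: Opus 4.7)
The plan is to use the comparison solution $v$ from Lemma~\ref{lem:u-w} on $\Omega_{2\varrho}(\xi)$ (extended by zero outside), split
\[
\mathbf{M}_{\alpha}(\chi_{B_{2\varrho}(\xi)}|\nabla u|^p) \le 2^{p-1}\bigl[\mathbf{M}_{\alpha}(\chi_{B_{2\varrho}(\xi)}|\nabla u-\nabla v|^p) + \mathbf{M}_{\alpha}(\chi_{B_{2\varrho}(\xi)}|\nabla v|^p)\bigr]
\]
via the convexity inequality $|\nabla u|^p \le 2^{p-1}(|\nabla u - \nabla v|^p + |\nabla v|^p)$ together with the sublinearity of $\mathbf{M}_\alpha$, and then bound the level set of each piece above the threshold $2^{-p}\varepsilon^{-\vartheta}\lambda$ by $\tfrac{\varepsilon}{2}\mathcal{L}^n(B_\varrho(\xi))$, with the free parameters $\vartheta$ and $\kappa$ selected at the very end.

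As a preliminary, since $z_1,z_2\in B_\varrho(\xi)$ one has $B_{2\varrho}(\xi)\subset B_{3\varrho}(z_i)$, so assumption~\eqref{eq:x2} together with the definition of $\mathbf{M}_\alpha$ yields
\[
\fint_{\Omega_{2\varrho}(\xi)}|\nabla u|^p\,dx \le C\varrho^{-\alpha}\lambda, \qquad \fint_{\Omega_{2\varrho}(\xi)}|\mathcal{E}|^p\,dx \le C\varrho^{-\alpha}\varepsilon^{\kappa}\lambda.
\]
For the first term of the splitting, I would apply~\eqref{ineq:lem:u-w} with $\delta=\varepsilon^{\kappa/\theta}$ so that the two contributions on the right balance, obtaining $\fint_{\Omega_{2\varrho}(\xi)}|\nabla u-\nabla v|^p\,dx \le C\varepsilon^{\kappa/\theta}\varrho^{-\alpha}\lambda$. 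The weak-type bound from Lemma~\ref{lem:M_alpha} with $s=1$ then gives
\[
\mathcal{L}^n\bigl(\{\mathbf{M}_\alpha(\chi_{B_{2\varrho}(\xi)}|\nabla u-\nabla v|^p) > 2^{-p}\varepsilon^{-\vartheta}\lambda\}\bigr) \le C\,\varepsilon^{(\vartheta+\kappa/\theta)n/(n-\alpha)}\varrho^n.
\]

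For the $|\nabla v|^p$ piece, I would exploit the higher integrability gain: the reverse H\"older inequality~\eqref{ineq:RH} applied after controlling $|\nabla v|^p$ by $|\nabla u|^p+|\nabla u-\nabla v|^p$ produces
\[
\fint_{\Omega_\varrho(\xi)}|\nabla v|^{\gamma p}\,dx \le C\bigl(\varrho^{-\alpha}\lambda\bigr)^{\gamma}.
\]
Applying Lemma~\ref{lem:M_alpha} with $s=\gamma$ to $\chi_{\Omega_\varrho(\xi)}|\nabla v|^p$, which is legitimate whenever $\alpha\gamma<n$ (precisely the threshold appearing in Theorem~\ref{theo:good-lambda}), I obtain
\[
\mathcal{L}^n\bigl(\{\mathbf{M}_\alpha(\chi_{B_{2\varrho}(\xi)}|\nabla v|^p) > 2^{-p}\varepsilon^{-\vartheta}\lambda\}\bigr) \le C\,\varepsilon^{\vartheta\gamma n/(n-\alpha\gamma)}\varrho^n.
\]

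The last step is algebraic: I select $\vartheta$ and $\kappa$ so that both exponents $(\vartheta+\kappa/\theta)n/(n-\alpha)$ and $\vartheta\gamma n/(n-\alpha\gamma)$ are strictly greater than $1$ (for instance, by forcing each of them equal to $2$, which is a two-variable linear system in $\vartheta$ and $\kappa$), and then set $\varepsilon_0$ small enough to absorb the multiplicative constants accumulated along the way. The main technical obstacle is precisely this balancing: one must simultaneously track the $\varrho^{-\alpha}$ scale from the fractional maximal operator, the power of $\delta$ from the comparison estimate~\eqref{ineq:lem:u-w}, and the higher-integrability exponent $\gamma$, and verify that all three dependencies cancel so that only clean powers of $\varepsilon$ and the intrinsic scale $\varrho^n$ remain — once that algebra goes through, summing the two partial estimates above produces~\eqref{eq:iigoal}.
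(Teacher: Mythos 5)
Your overall strategy (comparison solution, splitting into $|\nabla u-\nabla v|^p$ and $|\nabla v|^p$, weak-type bounds from Lemma~\ref{lem:M_alpha} with $s=1$ and $s=\gamma$, the balancing choice $\delta=\varepsilon^{\kappa/\theta}$, and the final selection of $\vartheta,\kappa$) is the same as the paper's, and your treatment of the difference term is fine. But there is a genuine gap in the $|\nabla v|^p$ piece. You pose the comparison problem on $\Omega_{2\varrho}(\xi)$, so the reverse H\"older inequality~\eqref{ineq:RH} gives $L^{\gamma p}$ control of $\nabla v$ only on the half-ball $\Omega_{\varrho}(\xi)$; yet the function under the maximal operator in your splitting is $\chi_{B_{2\varrho}(\xi)}|\nabla v|^p$, supported on all of $\Omega_{2\varrho}(\xi)$. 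Your final display applies Lemma~\ref{lem:M_alpha} with $s=\gamma$ to $\chi_{\Omega_{\varrho}(\xi)}|\nabla v|^p$ but states the conclusion for $\mathbf{M}_{\alpha}(\chi_{B_{2\varrho}(\xi)}|\nabla v|^p)$; since $\chi_{B_{2\varrho}(\xi)}|\nabla v|^p\ge\chi_{\Omega_{\varrho}(\xi)}|\nabla v|^p$, this inference does not follow. On the annulus $\Omega_{2\varrho}(\xi)\setminus\Omega_{\varrho}(\xi)$ you only know $\nabla v\in L^{p}$ (no higher integrability up to the boundary of the comparison domain itself, where the datum $u-g$ is merely $W^{1,p}$), and falling back to the $s=1$ estimate there would force $\vartheta\ge\frac{n-\alpha}{n}$, destroying the $\gamma$-gain that is the whole point of the lemma (it is what produces the range $q<\frac{n\gamma}{n-\alpha\gamma}$ in Theorem~\ref{theo:main} via the specific values $\vartheta=\frac1\gamma-\frac{\alpha}{n}$, $\kappa=\theta\left(1-\frac1\gamma\right)$ used in Theorem~\ref{theo:good-lambda}).

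The paper avoids exactly this mismatch by enlarging the comparison domain and distinguishing two cases: if $B_{4\varrho}(\xi)\subset\Omega$, the comparison solution is taken on $B_{4\varrho}(\xi)$, so the reverse H\"older gain is available on $B_{2\varrho}(\xi)$, i.e.\ on the full support of the cut-off; if $B_{4\varrho}(\xi)$ meets $\partial\Omega$, one picks a nearest boundary point $z_b$ and solves on $\Omega_{12\varrho}(z_b)$, using $B_{2\varrho}(\xi)\subset B_{6\varrho}(z_b)$ so that the gain on $B_{6\varrho}(z_b)$ again covers the support. The hypothesis~\eqref{eq:x2} remains usable because $B_{4\varrho}(\xi)\subset B_{5\varrho}(z_i)$ and $B_{12\varrho}(z_b)\subset B_{18\varrho}(z_i)$. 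To repair your argument you should adopt this enlargement (or, equivalently, prove a rescaled reverse H\"older estimate whose good set contains $B_{2\varrho}(\xi)\cap\Omega$, combined with a small/large-radius splitting of $\mathbf{M}_{\alpha}$ as in Lemma~\ref{lem:A2}); as written, the level-set bound for the $|\nabla v|^p$ piece is unjustified. A minor further remark: setting both exponents equal to $2$ does prove the lemma as literally stated, but the downstream theorems rely on the choice~\eqref{eq:paras} with value $1$, so if you intend the lemma to feed Theorem~\ref{theo:good-lambda} you should keep the paper's normalization and absorb the multiplicative constant by shrinking $\varepsilon_0$.
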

\begin{proof}
In order to obtain~\eqref{eq:iigoal}, we now consider two cases when $\xi$ belongs to the interior domain $B_{4\varrho}(\xi) \subset \Omega$ and $\xi$ is close to the boundary $B_{4\varrho}(\xi) \cap \partial\Omega \neq \emptyset$. In the first case $B_{4\varrho}(\xi) \subset \Omega$, let $v$ be the unique solution to the following equation
\begin{equation}\nonumber 
\begin{cases} \mbox{div} \left( \mathbb{A}(x,\nabla v)\right) & = \ 0, \quad \ \quad \mbox{ in } B_{4\varrho}(\xi),\\ 
\hspace{1.2cm} v & = \ u, \qquad \ \mbox{ on } \partial B_{4\varrho}(\xi).\end{cases}
\end{equation}
Thanks to reverse H{\"o}lder's inequality~\eqref{ineq:RH} and the comparison estimate~\eqref{ineq:lem:u-w} in Lemma~\ref{lem:u-w}, there exists a constant $\gamma>1$ such that
\begin{align}\label{eq:revH-1}
\left(\fint_{B_{2\varrho}(\xi)}{|\nabla v|^{\gamma p}} dx\right)^{\frac{1}{\gamma}} \le C \fint_{B_{4\varrho}(\xi)}{|\nabla v|^p dx},
\end{align}
and
\begin{align} \label{eq:com-1}
\fint_{B_{4\varrho}(\xi)}{|\nabla u - \nabla v|^pdx} & \le \delta \fint_{B_{4\varrho}(\xi)}{|\nabla u|^p dx}  + C {\delta}^{1-\theta } \fint_{B_{4\varrho}(\xi)}{|\mathcal{E}|^pdx},
\end{align}
for all $\delta \in (0,1)$ and $\theta $ is given as in~\eqref{def:beta}. We can decompose as follows
\begin{align}\nonumber
L : & =  \mathcal{L}^n \left(\left\{\zeta \in \Omega_{\varrho}(\xi): \ {\mathbf{M}}_{\alpha}(\chi_{B_{2\varrho}(\xi)} |\nabla u|^p)(\zeta)> \varepsilon^{-\vartheta}\lambda \right\} \right) \\ \nonumber
& \le C  \mathcal{L}^n\left(\left\{\zeta \in  B_{\varrho}(\xi): \ {\mathbf{M}}_{\alpha}(\chi_{B_{2\varrho}(\xi)} |\nabla u - \nabla v|^p)(\zeta)> \varepsilon^{-\vartheta}\lambda \right\} \right) \\ \label{eq:estV-1}
& \hspace{2cm}  + C\mathcal{L}^n\left(\left\{\zeta \in  B_{\varrho}(\xi): \ {\mathbf{M}}_{\alpha}(\chi_{B_{2\varrho}(\xi)} |\nabla v|^p)(\zeta)> \varepsilon^{-\vartheta}\lambda \right\}\right). 
\end{align}
Applying Lemma~\ref{lem:M_alpha} for two terms on the right hand side of~\eqref{eq:estV-1} with $s = 1$ and $s = \gamma>1$  respectively, we obtain that
\begin{align}\nonumber
L &\le C \left(\frac{1}{\varepsilon^{-\vartheta}\lambda} \int_{B_{2\varrho}(\xi)}|\nabla u - \nabla v|^p dx\right)^{\frac{n}{n-\alpha}}  + C \left(\frac{1}{\left(\varepsilon^{-\vartheta}\lambda\right)^{\gamma}} \int_{B_{2\varrho}(\xi)} |\nabla v|^{\gamma p} dx \right)^{\frac{n}{n-\alpha \gamma}}.
\end{align}
Performing as a form of the integral average, this inequality leads to
\begin{align}\label{eq:estV-2a}
L &\le  \left(\frac{(4\varrho)^n}{\varepsilon^{-\vartheta}\lambda} \fint_{B_{4\varrho}(\xi)}|\nabla u - \nabla v|^p dx\right)^{\frac{n}{n-\alpha}}  + C \left(\frac{(2\varrho)^n}{\left(\varepsilon^{-\vartheta}\lambda\right)^{\gamma}} \fint_{B_{2\varrho}(\xi)} |\nabla v|^{\gamma p} dx \right)^{\frac{n}{n-\alpha \gamma}} .
\end{align}
Thanks to~\eqref{eq:revH-1} and~\eqref{eq:com-1}, one obtains from~\eqref{eq:estV-2a} that
\begin{align}\nonumber
L &\le  C \left(\frac{(4\varrho)^n}{\varepsilon^{-\vartheta}\lambda} \right)^{\frac{n}{n-\alpha}} \left(\delta \fint_{B_{4\varrho}(\xi)}{|\nabla u|^p dx}  + {\delta}^{1-\theta } \fint_{B_{4\varrho}(\xi)}{|\mathcal{E}|^pdx}\right)^{\frac{n}{n-\alpha}} \\ \label{eq:estV-2}
& \qquad \qquad + C \left(\frac{(2\varrho)^{\frac{n}{\gamma}}}{\varepsilon^{-\vartheta}\lambda} \fint_{B_{4\varrho}(\xi)} |\nabla v|^{p} dx \right)^{\frac{n \gamma}{n-\alpha \gamma}} .
\end{align}
It is not difficult to see that $B_{4\varrho}(\xi) \subset B_{5\varrho}(z_1) \cap B_{5\varrho}(z_2)$  which yields from~\eqref{eq:x2} that
\begin{align}\label{eq:com-2a}
\fint_{B_{4\varrho}(\xi)}{|\nabla u|^pdx} & \le  C \fint_{B_{5\varrho}(z_1)}{|\nabla u|^pdx} \le C (5\varrho)^{-\alpha} {\mathbf{M}}_{\alpha}(|\nabla u|^p)(z_1) \le C {\varrho}^{-\alpha} \lambda,
\end{align}
and
\begin{align}\label{eq:com-2b}
\fint_{B_{4\varrho}(\xi)}{|\mathcal{E}|^pdx} \le  C \fint_{B_{5\varrho}(z_2)}{|\mathcal{E}|^pdx} \le C (5\varrho)^{-\alpha} {\mathbf{M}}_{\alpha}(|\mathcal{E}|^p)(z_2) \le C {\varrho}^{-\alpha} \varepsilon^{\kappa} \lambda.
\end{align}
On the other hand, thanks to~\eqref{eq:com-1} again and notice that $1+\delta \le 2$, one has
\begin{align} \nonumber
\fint_{B_{4\varrho}(\xi)} |\nabla v|^p dx & \le C \left(\fint_{B_{4\varrho}(\xi)} |\nabla u|^p dx + \fint_{B_{4\varrho}(\xi)} |\nabla u - \nabla v|^p dx\right) \\ \label{eq:com-2c}
& \le C \left(\fint_{B_{4\varrho}(\xi)} |\nabla u|^p dx + {\delta}^{1-\theta }\fint_{B_{4\varrho}(\xi)} |\mathcal{E}|^p dx\right).
\end{align}
Collecting~\eqref{eq:com-2a},~\eqref{eq:com-2b} and~\eqref{eq:com-2c} into~\eqref{eq:estV-2}, it gives
\begin{align*}
L &\le  C \left(\frac{(4\varrho)^n}{\varepsilon^{-\vartheta}\lambda} \right)^{\frac{n}{n-\alpha}} \left(\delta {\varrho}^{-\alpha} \lambda  + {\delta}^{1-\theta } {\varrho}^{-\alpha} \varepsilon^{\kappa} \lambda \right)^{\frac{n}{n-\alpha}} \\
& \hspace{4cm}  + C \left(\frac{(2\varrho)^{\frac{n}{\gamma}}}{\varepsilon^{-\vartheta}\lambda} \left(  {\varrho}^{-\alpha} \lambda  + {\delta}^{1-\theta } {\varrho}^{-\alpha} \varepsilon^{\kappa} \lambda \right) \right)^{\frac{n \gamma}{n-\alpha \gamma}} .
\end{align*}
By direct computation this inequality, we obtain that
\begin{align}\label{est:L}
L &\le  C \left[ {\varepsilon}^{\frac{\vartheta n}{n-\alpha}} (\delta + {\delta}^{1-\theta }\varepsilon^{\kappa})^{\frac{n}{n-\alpha}} + {\varepsilon}^{\frac{\vartheta n \gamma}{n-\alpha \gamma}} (1 + {\delta}^{1-\theta }\varepsilon^{\kappa})^{\frac{n \gamma}{n-\alpha \gamma}} \right]  \varrho^n .
\end{align}
Let us take $\delta = \varepsilon^{\frac{\kappa}{\theta }}$ in~\eqref{est:L}, we may conclude that
\begin{align}\label{est:Lb}
L \le C \left(\varepsilon^{\left(\vartheta + \frac{\kappa}{\theta }\right)\frac{n}{n-\alpha}} + \varepsilon^{\frac{\vartheta n \gamma}{n-\alpha \gamma}} \right) \varrho^n \le  C \left(\varepsilon^{\left(\vartheta + \frac{\kappa}{\theta }\right)\frac{n}{n-\alpha}} + \varepsilon^{\frac{\vartheta n \gamma}{n-\alpha \gamma}} \right) \mathcal{L}^n \left(B_{\varrho}(\xi)\right).
\end{align}
To balance the exponent of $\varepsilon$ on the right hand side of~\eqref{est:Lb}, let us choose $\vartheta$ and $\kappa$ such that 
\begin{align}\label{eq:paras}
\frac{\vartheta n \gamma}{n-\alpha \gamma} = \left(\vartheta + \frac{\kappa}{\theta }\right)\frac{n}{n-\alpha} = 1,
\end{align}
which guarantees~\eqref{eq:iigoal}.\\

We now consider the other case when $B_{4\varrho}(\xi) \cap \partial\Omega \neq \emptyset$. In this case, there is $z_b \in \partial \Omega$ such that $|z_b - \xi| = \mathrm{dist}(\xi,\partial \Omega) \le 4\varrho$. Let us consider $\tilde{v}$ as the unique solution to the following equation
\begin{equation}\nonumber 
\begin{cases} \mbox{div} \left( \mathcal{A}(x,\nabla \tilde{v})\right) & = \ 0, \quad \ \quad \mbox{ in } \Omega_{12\varrho}(z_b),\\ 
\hspace{1.2cm} \tilde{v} & = \ u, \qquad \mbox{ on } \partial \Omega_{12\varrho}(z_b).\end{cases}
\end{equation}
We remark that $B_{2\varrho}(\xi) \subset B_{6\varrho}(z_b)$, it yields that
\begin{align}\nonumber
L & \le \mathcal{L}^n \left(\left\{ \zeta \in \Omega_{\varrho}(\xi): \ {\mathbf{M}}_{\alpha}(\chi_{B_{6\varrho}(z_b)} |\nabla u|^p)(\zeta)> \varepsilon^{-\vartheta}\lambda \right\} \right)\\ \nonumber
 &\le C  \mathcal{L}^n\left(\left\{\zeta \in \Omega_{\varrho}(\xi): \ {\mathbf{M}}_{\alpha}(\chi_{B_{6\varrho}(z_b)} |\nabla u - \nabla \tilde{v}|^p)(\zeta)> \varepsilon^{-\vartheta}\lambda \right\} \right) \\ \nonumber
& \qquad \qquad + C\mathcal{L}^n\left(\left\{ \zeta \in \Omega_{\varrho}(\xi): \ {\mathbf{M}}_{\alpha}(\chi_{B_{6\varrho}(z_b)} |\nabla \tilde{v}|^p)(\zeta)> \varepsilon^{-\vartheta}\lambda \right\} \right)  \\ \nonumber
&\le C \left( \frac{1}{\varepsilon^{-\vartheta}\lambda} \int_{B_{6\varrho}(z_b)}|\nabla u - \nabla \tilde{v}|^p dx \right)^{\frac{n}{n-\alpha}} + C \left(\frac{1}{\left(\varepsilon^{-\vartheta}\lambda\right)^{\gamma}} \int_{B_{6\varrho}(z_b)} |\nabla \tilde{v}|^{\gamma p} dx\right)^{\frac{n}{n-\alpha \gamma}},
\end{align}
where Lemma~\ref{lem:M_alpha} is applied in the last inequality. It thus gives us
\begin{align} \label{eq:est-101}
L \le C \left( \frac{\varrho^n}{\varepsilon^{-\vartheta}\lambda} \fint_{B_{12\varrho}(z_b)}|\nabla u - \nabla \tilde{v}|^p dx \right)^{\frac{n}{n-\alpha}} + C \left(\frac{\varrho^n}{\left(\varepsilon^{-\vartheta}\lambda\right)^{\gamma}} \fint_{B_{6\varrho}(z_b)} |\nabla \tilde{v}|^{\gamma p} dx\right)^{\frac{n}{n-\alpha \gamma}}.
\end{align}
Thanks to Lemma~\ref{lem:u-w} again, one has
\begin{align*}
\left(\fint_{B_{6\varrho}(z_b)}{|\nabla \tilde{v}|^{\gamma p} dx}\right)^{\frac{1}{\gamma}} \le C \fint_{B_{12\varrho}(z_b)}{|\nabla \tilde{v}|^p dx},
\end{align*}
and
\begin{align} \nonumber
\fint_{B_{12\varrho}(z_b)}{|\nabla u - \nabla \tilde{v}|^pdx}  \le \delta \fint_{B_{12\varrho}(z_b)}{|\nabla u|^p dx} + C {\delta}^{1-\theta } \fint_{B_{12\varrho}(z_b)}{|\mathcal{E}|^pdx},
\end{align}
for all $\delta \in (0,1)$ and $\theta $ given in~\eqref{def:beta}. Similar the first case, using assumptions~\eqref{eq:x2} with notice that $B_{12\varrho}(z_b) \subset B_{18\varrho}(z_1) \cap B_{18\varrho}(z_2)$, we can easily to check that
\begin{align*}
 \fint_{B_{12\varrho}(z_b)}{|\nabla u|^pdx} \le C \varrho^{-\alpha} \lambda \ \mbox{ and } \ \fint_{B_{12\varrho}(z_b)}{|\mathcal{E}|^pdx} \le C \varrho^{-\alpha} \varepsilon^{\kappa} \lambda.
\end{align*}
It follows that
\begin{align}\label{est:B-1}
\fint_{B_{12\varrho}(z_b)}{|\nabla u - \nabla \tilde{v}|^pdx} & \le  C \left(\delta  + \delta^{1-\theta } \varepsilon^{\kappa} \right) \varrho^{-\alpha} \lambda,
\end{align}
and therefore
\begin{align}\nonumber
\fint_{B_{6\varrho}(z_b)} |\nabla \tilde{v}|^{\gamma p} dx & \le C \left(\fint_{B_{12\varrho}(z_b)} |\nabla u|^p + |\nabla u - \nabla \tilde{v}|^pdx \right)^{\gamma} \\ \nonumber
&\le C \left[\left(1+ \delta  + \delta^{1-\theta } \varepsilon^{\kappa} \right) \varrho^{-\alpha} \lambda\right]^{\gamma} \\ \label{est:B-2}
&\le C \left[\left(1 + \delta^{1-\theta } \varepsilon^{\kappa} \right) \varrho^{-\alpha} \lambda\right]^{\gamma}.
\end{align}
Collecting~\eqref{eq:est-101} with~\eqref{est:B-1} and~\eqref{est:B-2}, one gets the same estimate as in~\eqref{est:Lb}. To conclude the proof, it remains to choose the same parameters as in~\eqref{eq:paras}. One concludes~\eqref{eq:iigoal} to finish the proof.
\end{proof} 

\begin{lemma}\label{lem:B2} 
Let $\alpha \in [0,n)$. For all $\varepsilon>0$, one can find some constants $a_{\infty}> 1$, $\delta = \delta(\varepsilon)$ and $a_{\varepsilon} \in (0,1)$ such that if $[\mathbb{A}]_{\tilde{p}}^{r_0} \le \delta$, $z_1 \in \Omega_{\varrho}(\xi)$ and $z_2 \in B_{\varrho}(\xi)$ satisfying
\begin{align}\label{cond:M-B2}
{\mathbf{M}}_{\alpha}(|\nabla u|^p)(z_1) \le \lambda \ \mbox{ and } \
{\mathbf{M}}_{\alpha}(|\mathcal{E}|^p)(z_2) \le a_{\varepsilon} \lambda,
\end{align}
then the following inequality
\begin{align}\label{eq:iigoal-B2}
\mathcal{L}^n \left(\left\{\zeta \in \Omega_{\varrho}(\xi): \ {\mathbf{M}}_{\alpha}^{\varrho} (\chi_{B_{2\varrho}(\xi)} |\nabla u|^p)(\zeta)> a_{\infty} \lambda \right\} \right) \le \varepsilon\mathcal{L}^n\left(B_{\varrho}(\xi)\right),
\end{align}
holds for all $\lambda>0$.
\end{lemma}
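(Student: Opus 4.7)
The plan is to mirror the strategy of Lemma~\ref{lem:A3}, but replace the two-way comparison $u \leftrightarrow v$ by the three-way comparison $u \leftrightarrow v \leftrightarrow w$, where $w$ solves the averaged-operator problem~\eqref{eq:w-I2}. The extra step is essential because in the Reifenberg setting we no longer expect a reverse H\"older gain for $\nabla u$ beyond the exponent supplied by the BMO smallness; instead, we will exploit the $L^\infty$ estimate~\eqref{eq:theoIc} for $\nabla w$ to absorb the ``main'' part of $\nabla u$ into an arbitrarily small set once $a_\infty$ is chosen large enough (independently of $\varepsilon$), while the error $\nabla u - \nabla w$ is driven below any prescribed threshold by making $\delta$ (and then $a_\varepsilon$) sufficiently small.

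I will treat separately the interior case $B_{4\varrho}(\xi)\subset\Omega$ and the boundary case $B_{4\varrho}(\xi)\cap\partial\Omega\neq\emptyset$, in each case letting $v$ solve the reference problem~\eqref{eq:I1} on the appropriate $\Omega_{2\varrho}(\xi)$ (or a boundary-adapted enlargement as in the proof of Lemma~\ref{lem:A3}) and letting $w$ solve~\eqref{eq:w-I2} so that the $L^\infty$ bound~\eqref{eq:theoIc} applies on a neighborhood containing $B_{2\varrho}(\xi)\cap\Omega_{\varrho}(\xi)$. By the elementary inequality $(a+b)^p\le 2^p(a^p+b^p)$, I split
\begin{align*}
\mathcal{L}^n\!\left(\{\zeta\in\Omega_{\varrho}(\xi):\mathbf{M}_{\alpha}^{\varrho}(\chi_{B_{2\varrho}(\xi)}|\nabla u|^p)(\zeta)>a_\infty\lambda\}\right)
&\le \mathcal{L}^n(E_1) + \mathcal{L}^n(E_2),
\end{align*}
where $E_1$ is the corresponding level set for $|\nabla u-\nabla w|^p$ at height $a_\infty \lambda/2^{p+1}$ and $E_2$ is the level set for $|\nabla w|^p$ at height $a_\infty\lambda/2^{p+1}$.

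For $E_2$, I use~\eqref{eq:theoIc} together with the hypotheses~\eqref{cond:M-B2} (which, via $B_{2\varrho}(\xi)\subset B_{3\varrho}(z_1)\cap B_{3\varrho}(z_2)$, yield $\fint_{B_{2\varrho}(\xi)\cap\Omega}|\nabla u|^p\le C\varrho^{-\alpha}\lambda$ and similarly $\fint|\mathcal{E}|^p\le C\varrho^{-\alpha}a_\varepsilon\lambda\le C\varrho^{-\alpha}\lambda$) to obtain $\|\nabla w\|^p_{L^\infty}\le C_\star\varrho^{-\alpha}\lambda$. Since the truncated operator satisfies $\mathbf{M}_{\alpha}^{\varrho}(\chi_{B_{2\varrho}(\xi)}|\nabla w|^p)(\zeta)\le \varrho^\alpha\|\nabla w\|_\infty^p\le C_\star\lambda$, choosing $a_\infty:=2^{p+1}C_\star+1$ (depending only on $n,p,\alpha,\Lambda_1,\Lambda_2$) makes $E_2$ empty. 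For $E_1$, I apply Lemma~\ref{lem:M_alpha} with $s=1$ and then~\eqref{eq:theoId}, which after inserting the same average bounds produces
\begin{align*}
\mathcal{L}^n(E_1) \le C\,\varrho^n\left(\delta^p + \delta^{p(1-\theta)}\,a_\varepsilon\right)^{\frac{n}{n-\alpha}}.
\end{align*}

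The proof concludes by choosing the small parameters in the correct order: first select $\delta=\delta(n,p,\alpha,\varepsilon)\in(0,1)$ so small that $\delta^{\frac{pn}{n-\alpha}}\le \varepsilon/(2C)$, and then $a_\varepsilon=a_\varepsilon(\delta,\varepsilon)\in(0,1)$ so small that $(\delta^{p(1-\theta)}a_\varepsilon)^{\frac{n}{n-\alpha}}\le\varepsilon/(2C)$; this delivers $\mathcal{L}^n(E_1)\le\varepsilon\mathcal{L}^n(B_{\varrho}(\xi))$ and the lemma follows. I expect the main obstacle to be the bookkeeping of the concentric radii in the boundary case, where $w$ is defined on a surface ball around a boundary point $z_b$ near $\xi$ and one must verify that both the $L^\infty$ estimate from Lemma~\ref{lem:B1} and the truncated maximal operator ``see'' only the region where~\eqref{eq:theoIc}--\eqref{eq:theoId} apply; this is precisely where the Reifenberg flatness of $\Omega$ (and the smallness of $\delta$ required for Lemma~\ref{lem:B1} to hold) enters in an essential way.
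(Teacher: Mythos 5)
Your proposal is correct and takes essentially the same route as the paper's proof: split $\nabla u$ into $\nabla w$ plus the comparison error, make the $|\nabla w|^p$ level set empty via the $L^\infty$ bound~\eqref{eq:theoIc} by fixing $a_\infty$ large independently of $\varepsilon$, bound the error level set with Lemma~\ref{lem:M_alpha} ($s=1$) together with~\eqref{eq:theoId}, and then choose $\delta$ and $a_\varepsilon$ small (the paper takes $a_\varepsilon\le\delta^{p\theta}$ first and then shrinks $\delta$, but the order is immaterial). The only adjustment is the radii bookkeeping you yourself flag: to get the $L^\infty$ bound on all of $B_{2\varrho}(\xi)$, which is what $\mathbf{M}_\alpha^{\varrho}(\chi_{B_{2\varrho}(\xi)}\cdot)$ sees from points of $\Omega_\varrho(\xi)$, one should pose $v$ on $\Omega_{8\varrho}(\xi)$ and $w$ on $\Omega_{4\varrho}(\xi)$ (so the case split is $B_{8\varrho}(\xi)\subset\Omega$ versus $B_{8\varrho}(\xi)\cap\partial\Omega\neq\emptyset$, with the averages taken over $B_{8\varrho}(\xi)\subset B_{9\varrho}(z_1)\cap B_{9\varrho}(z_2)$ rather than $B_{2\varrho}(\xi)\subset B_{3\varrho}(z_i)$), exactly as the paper does.
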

\begin{proof}
As in the proof of Lemma~\ref{lem:A3}, we also consider two cases corresponding to $B_{8\varrho}(\xi) \subset \Omega$ and $B_{8\varrho}(\xi) \cap \partial \Omega \neq \emptyset$ respectively. In the first case $B_{8\varrho}(\xi) \subset \Omega$, it notices that $\Omega_{8\varrho}(\xi) = B_{8\varrho}(\xi)$. Let $v$ be the unique solution to 
\begin{equation*}
\begin{cases} \mathrm{div} \left( {\mathbb{A}}(x,\nabla v)\right) & = \ 0, \qquad \, \mbox{ in } B_{8\varrho}(\xi),\\ 
\hspace{1.2cm} v & = \ u-g, \ \mbox{ on } \partial B_{8\varrho}(\xi),\end{cases}
\end{equation*}
and $w$ be the unique solution to the following equation 
\begin{equation*}
\begin{cases} \mathrm{div} \left( \overline{\mathbb{A}}_{B_{4\varrho}(\xi)}(\nabla w)\right) & = \ 0, \ \mbox{ in } B_{4\varrho}(\xi),\\ 
\hspace{1.2cm} w & = \ v, \ \mbox{ on } \partial B_{4\varrho}(\xi).\end{cases}
\end{equation*}
For all $\zeta \in \Omega_{\varrho}(\xi) = B_{\varrho}(\xi)$, thanks to~\eqref{eq:theoIc} in Lemma~\ref{lem:B1} one has 
\begin{align*}
{\mathbf{M}}_{\alpha}^{\varrho}(\chi_{B_{2\varrho}(\xi)} |\nabla w|^p)(\zeta) & \le C \varrho^{\alpha}\|\nabla w\|^p_{L^{\infty}(B_{2\varrho}(\xi))} \\
& \le C \left(\varrho^{\alpha} \fint_{B_{8\varrho}(\xi)} |\nabla u|^p  dx +    \varrho^{\alpha}\fint_{B_{8\varrho}(\xi)}  |\mathcal{E}|^p dx \right) \\
& \le C \left( {\mathbf{M}}_{\alpha}(|\nabla u|^p)(z_1) + {\mathbf{M}}_{\alpha}(|\mathcal{E}|^p)(z_2) \right),
\end{align*}
which deduces from the assumption~\eqref{cond:M-B2} that
\begin{align*}
{\mathbf{M}}_{\alpha}^{\varrho}(\chi_{B_{2\varrho}(\xi)} |\nabla w|^p)(\zeta) & \le C \left(1+a_{\varepsilon}\right)\lambda \le C^* \lambda, \ \mbox{ for all } a_{\varepsilon} \in (0,1).
\end{align*}
It yields that if we choose $a_{\infty} \ge C^*$ then $$ \mathcal{L}^n \left(\left\{\zeta \in B_{\varrho}(\xi): \ {\mathbf{M}}_{\alpha}^{\varrho} (\chi_{B_{2\varrho}(\xi)} |\nabla w|^p)(\zeta)> a_{\infty} \lambda \right\} \right) = 0,$$ 
which guarantees that
\begin{align}\nonumber
\mathcal{L}^n & \left(\left\{\zeta \in \Omega_{\varrho}(\xi): \ {\mathbf{M}}_{\alpha}^{\varrho} (\chi_{B_{2\varrho}(\xi)} |\nabla u|^p)(\zeta)> a_{\infty} \lambda \right\} \right) \\ \nonumber
& \hspace{2cm} \le C \mathcal{L}^n \left(\left\{\zeta \in B_{\varrho}(\xi): \ {\mathbf{M}}_{\alpha}^{\varrho} (\chi_{B_{2\varrho}(\xi)} |\nabla u - \nabla w|^p)(\zeta)> a_{\infty} \lambda \right\} \right) \\ \label{est:B2-1}
& \hspace{2cm} \le  C \left(\frac{1}{a_{\infty} \lambda}\int_{B_{2\varrho}(\xi)}|\nabla u - \nabla w|^p dx\right)^{\frac{n}{n-\alpha}}.
\end{align}
Here the last inequality in~\eqref{est:B2-1} holds due to Lemma~\ref{lem:M_alpha} with $s = 1$. The right-hand side of~\eqref{est:B2-1} can be estimated by applying inequality~\eqref{eq:theoIc} in Lemma~\ref{lem:B1} and assumption~\eqref{cond:M-B2}, as follows
\begin{align}\nonumber
\int_{B_{2\varrho}(\xi)}|\nabla u - \nabla w|^p dx & \le C \varrho^n \fint_{B_{4\varrho}(\xi)} |\nabla u - \nabla w|^pdx \\ \nonumber
& \le C \varrho^{n - \alpha} \left( \left([\mathbb{A}]_{\tilde{p}}^{4\varrho}\right)^p \varrho^{\alpha}\fint_{B_{8\varrho}(\xi)} |\nabla u|^p dx \right. \\ \nonumber 
& \hspace{3cm} \left. +  \left([\mathbb{A}]_{\tilde{p}}^{4\varrho}\right)^{p(1-\theta )}  \varrho^{\alpha}\fint_{B_{8\varrho}(\xi)} |\mathcal{E}|^p dx \right) \\ \nonumber
& \le C \varrho^{n - \alpha}\left( \delta^p {\mathbf{M}}_{\alpha}(|\nabla u|^p)(z_1) + \delta^{p(1-\theta )} {\mathbf{M}}_{\alpha}(|\mathcal{E}|^p)(z_2) \right) \\ 
\nonumber
& \le C \varrho^{n - \alpha}\left( \delta^p  + \delta^{p(1-\theta )} a_{\varepsilon} \right) \lambda \\ \label{est:B2-2}
& \le C \varrho^{n - \alpha} \delta^p  \lambda,
\end{align} 
where $a_{\varepsilon} \in (0,\min\{1,\delta^{p\theta }\})$. Combining~\eqref{est:B2-1} and~\eqref{est:B2-2}, we obtain that
\begin{align*}
\mathcal{L}^n & \left(\left\{\zeta \in \Omega_{\varrho}(\xi): \ {\mathbf{M}}_{\alpha}^{\varrho} (\chi_{B_{2\varrho}(\xi)} |\nabla u|^p)(\zeta)> a_{\infty} \lambda \right\} \right) \le C \varrho^n \left(\frac{\delta^p}{a_{\infty}}\right)^{\frac{n}{n-\alpha}},
\end{align*}
which implies to~\eqref{eq:iigoal-B2} by taking $\delta$ small enough such that $C\left(\frac{\delta^p}{a_{\infty}}\right)^{\frac{n}{n-\alpha}} < \varepsilon$. We can prove~\eqref{eq:iigoal-B2} for the second case $B_{8\varrho}(\xi) \cap \partial \Omega \neq \emptyset$ by the same technique as in the proof of Lemma~\ref{lem:A3}. More precisely, we first take $z_b \in \partial \Omega$ such that $|z_b - \xi| = \mathrm{dist}(\xi,\partial \Omega) \le 8\varrho$. Then we consider the similar problem in $\Omega_{18\varrho}(z_b)$ as the proof of the first case. 
\end{proof}

\section{Proofs of main Theorems}\label{sec:proofs}
In this section, we give detail proofs of main Theorems by applying the covering Lemma~\ref{lem:mainlem} which is discussed in the previous section.
 
\begin{proof}[Proof of Theorem~\ref{theo:good-lambda}]
In this proof, we apply Lemma~\ref{lem:mainlem} for two measurable subsets $\mathcal{V}_{\varepsilon, \lambda}$, $\mathcal{W}_{\lambda}$ of $\Omega$ given below. For every $\varepsilon>0$ and $\lambda>0$, let us define
\begin{align*}
 \mathcal{V}_{\varepsilon, \lambda}  & := \left\{\zeta \in \Omega: \ {\mathbf{M}}_{\alpha}(|\nabla u|^p)(\zeta)>\varepsilon^{\frac{\alpha}{n} - \frac{1}{\gamma}}\lambda, \ {\mathbf{M}}_{\alpha}(|\mathcal{E}|^p)(\zeta) \le \varepsilon^{\theta  \left(1 - \frac{1}{\gamma}\right)}\lambda \right\}, \\ 
& \hspace{2cm} \mbox{and } \ \mathcal{W}_{\lambda} := \left\{\zeta \in \Omega: \ {\mathbf{M}}_{\alpha}(|\nabla u|^p)(\zeta)> \lambda \right\},
\end{align*}
where $\gamma>1$ and $\theta  \ge 2$ are two constants defined as in Lemma~\ref{lem:u-w}. To prove the good-$\lambda$ inequality~\eqref{eq:good-lambda} in Theorem~\ref{theo:good-lambda}, we need to show that for every $0< R < r_0/12$, there holds
\begin{enumerate}
\item[i)] $\mathcal{L}^n\left(\mathcal{V}_{\varepsilon, \lambda}\right) \le  \varepsilon \mathcal{L}^n \left(\mathcal{B}_R(0)\right)$;
\item[ii)] for any $\xi \in \Omega$, $\varrho \in (0,R]$, if $\mathcal{L}^n\left(\mathcal{V}_{\varepsilon, \lambda} \cap B_{\varrho}(\xi)\right) > \varepsilon \mathcal{L}^n\left(B_{\varrho}(\xi)\right)$ then $\Omega_{\varrho}(\xi) \subset \mathcal{W}_{\lambda}$;
\end{enumerate}
for all $\lambda>0$ and $\varepsilon$ small enough. We first emphasize that it is nothing to do when $\mathcal{V}_{\varepsilon, \lambda}$ is empty. So we may assume that $\mathcal{V}_{\varepsilon, \lambda}$ is not empty which implies that there exists $\xi_1 \in \Omega$ such that ${\mathbf{M}}_{\alpha}(|\mathcal{E}|^p)(\xi_1) \le \varepsilon^{\theta  \left(1 - \frac{1}{\gamma}\right)}\lambda$. Thanks to Lemma~\ref{lem:A1}, one has
\begin{align*}
\mathcal{L}^n\left(\mathcal{V}_{\varepsilon, \lambda}\right) \le C  \varepsilon^{1 + \frac{n(\gamma-1)(\theta -1)}{\gamma(n-\alpha)}} \mathcal{L}^n\left(\mathcal{B}_R(0)\right),
\end{align*}
which guarantees $i)$ for $\varepsilon$ small enough. The proof of $ii)$ can be obtained by contradiction. Let us assume that we can find $\xi_2 \in \Omega_{\varrho}(\xi) \cap \mathcal{W}^c_{\lambda}$ and $\xi_3 \in \mathcal{V}_{\varepsilon, \lambda} \cap B_{\varrho}(\xi)$, it leads to
\begin{align*}
{\mathbf{M}}_{\alpha}(|\nabla u|^p)(\xi_2) \le \lambda \ \mbox{ and } \
{\mathbf{M}}_{\alpha}(|\mathcal{E}|^p)(\xi_3) \le \varepsilon^{\theta  \left(1 - \frac{1}{\gamma}\right)}\lambda.
\end{align*}
Thanks to Lemma~\ref{lem:A2} and Lemma~\ref{lem:A3}, for $\varepsilon \in \left(0,3^{-\frac{n^2\gamma}{n-\alpha \gamma}}\right)$ there holds
\begin{align*}
\mathcal{L}^n\left(\mathcal{V}_{\varepsilon, \lambda} \cap B_{\varrho}(\xi)\right) & \le \mathcal{L}^n \left(\left\{\zeta \in \Omega_{\varrho}(\xi): \ {\mathbf{M}}_{\alpha}(\chi_{B_{2\varrho}(\xi)} |\nabla u|^p)(\zeta) > \varepsilon^{\frac{\alpha}{n}-\frac{1}{\gamma}} \lambda \right\} \right)  \le \varepsilon \mathcal{L}^n \left(B_{\varrho}(\xi)\right),
\end{align*}
which finishes the proof.
\end{proof}

\begin{proof}[Proof of Theorem~\ref{theo:main}]
By changing of variable $\lambda$ to $\varepsilon^{\frac{\alpha}{n}-\frac{1}{\gamma}}\lambda$ in the integral of definition of norm in Lorentz space $L^{q,s}(\Omega)$, there holds
\begin{align*}
\|\mathbf{M}_{\alpha}(|\nabla u|^p)\|^s_{L^{q,s}(\Omega)} & = q \int_0^\infty{\lambda^s \mathcal{L}^n\left(\left\{\zeta \in \Omega: \ \mathbf{M}_{\alpha}(|\nabla u|^p)(\zeta)>\lambda\right\} \right)^{\frac{s}{q}}\frac{d\lambda}{\lambda}} \\
& = \varepsilon^{\frac{s\alpha}{n}-\frac{s}{\gamma}}q\int_0^\infty{\lambda^s\mathcal{L}^n\left(\left\{\zeta \in \Omega: \ \mathbf{M}_{\alpha}(|\nabla u|^p)(\zeta)>\varepsilon^{\frac{\alpha}{n}-\frac{1}{\gamma}}\lambda\right\} \right)^{\frac{s}{q}}\frac{d\lambda}{\lambda}}.
\end{align*}
Due to Theorem~\ref{theo:good-lambda}, one has
\begin{align*}
& \mathcal{L}^n\left(\left\{\zeta \in \Omega: \ \mathbf{M}_{\alpha}(|\nabla u|^p)(\zeta)>\varepsilon^{\frac{\alpha}{n}-\frac{1}{\gamma}}\lambda\right\}\right) \le C\varepsilon \mathcal{L}^n\left(\left\{\zeta \in \Omega: \ \mathbf{M}_{\alpha}(|\nabla u|^p)(\zeta)>\lambda\right\}\right)\\ & \hspace{2cm} + \mathcal{L}^n\left(\left\{\zeta \in \Omega: \ \mathbf{M}_{\alpha}(|\mathcal{E}|^p)(\zeta)>\varepsilon^{\theta \left(1-\frac{1}{\gamma}\right)}\lambda\right\} \right),
\end{align*}
for all $\varepsilon \in (0,\varepsilon_0)$ for some $\varepsilon_0>0$ small enough. Therefore, we get that
\begin{align}\nonumber
& \|\mathbf{M}_{\alpha}(|\nabla u|^p)\|^s_{L^{q,s}(\Omega)} \le C\varepsilon^{\frac{s\alpha}{n}-\frac{s}{\gamma}+\frac{s}{q}}q\int_0^\infty{\lambda^s\mathcal{L}^n\left(\left\{\zeta \in \Omega: \ \mathbf{M}_{\alpha}(|\nabla u|^p)(\zeta)>\lambda\right\} \right)^{\frac{s}{q}}\frac{d\lambda}{\lambda}}\\ \label{est:5a}
& \hspace{1cm} + C\varepsilon^{\frac{s\alpha}{n}-\frac{s}{\gamma}}q\int_0^\infty{\lambda^s\mathcal{L}^n\left(\left\{\zeta \in \Omega: \  \mathbf{M}_{\alpha}(|\mathcal{E}|^p)(\zeta)>\varepsilon^{\theta \left(1-\frac{1}{\gamma}\right)}\lambda\right\} \right)^{\frac{s}{q}}\frac{d\lambda}{\lambda}}.
\end{align}
By changing of variable again in the last term on right-hand side of~\eqref{est:5a}, it implies that
\begin{align}\nonumber
\|\mathbf{M}_{\alpha}(|\nabla u|^p)\|^s_{L^{q,s}(\Omega)} &\le C\varepsilon^{\frac{s\alpha}{n}-\frac{s}{\gamma}+\frac{s}{q}}\|\mathbf{M}_{\alpha}\left(|\nabla u|^p \right)\|^s_{L^{q,s}(\Omega)}\\  \label{est:5b}
& \qquad +C\varepsilon^{\frac{s\alpha}{n}-\frac{s}{\gamma}-s\theta \left(1-\frac{1}{\gamma}\right)}\|\mathbf{M}_{\alpha}(|\mathcal{E}|^p)\|^s_{L^{q,s}(\Omega)}.
\end{align}
For every $0<s<\infty$ and $0<q<\frac{n\gamma}{n-\alpha\gamma}$, we may choose $\varepsilon$ in~\eqref{est:5b} such that:
\begin{align*}
C\varepsilon^{s\left(\frac{\alpha}{n}-\frac{1}{\gamma}+ \frac{1}{q}\right)} \le \frac{1}{2},
\end{align*}
to obtain~\eqref{eq:regularityMalpha}. The proof can be obtained by the similar way for the case $s = \infty$. 
\end{proof}

\begin{proof}[Proof of Theorem~\ref{theo:good-lambda-Rf} ]
It is similar to the proof of Theorem~\ref{theo:good-lambda}, the good-$\lambda$ inequality~\eqref{eq:good-lambda-Rf} can be proved by applying Lemma~\ref{lem:mainlem} for two following subsets of $\Omega$ as
\begin{align*}
 \mathcal{V}_{\varepsilon, \lambda}  & := \left\{\zeta \in \Omega: \ {\mathbf{M}}_{\alpha}(|\nabla u|^p)(\zeta)> a_{\infty}\lambda, \ {\mathbf{M}}_{\alpha}(|\mathcal{E}|^p)(\zeta) \le a_{\varepsilon} \lambda \right\}, \\ 
& \hspace{1cm} \mbox{and } \ \mathcal{W}_{\lambda} := \left\{\zeta \in \Omega: \ {\mathbf{M}}_{\alpha}(|\nabla u|^p)(\zeta)> \lambda \right\},
\end{align*}
where $a_{\infty}$ and $a_{\varepsilon}$ will be determined later. Without loss of generality, we may assume that $\mathcal{V}_{\varepsilon, \lambda}$ is not empty which implies that there exists $\xi_1 \in \Omega$ such that ${\mathbf{M}}_{\alpha}(|\mathcal{E}|^p)(\xi_1) \le  a_{\varepsilon} \lambda$. Thanks to Lemma~\ref{lem:A1}, one has
\begin{align*}
\mathcal{L}^n\left(\mathcal{V}_{\varepsilon, \lambda}\right) \le  C^{**}  \left(\frac{ a_{\varepsilon}}{a_{\infty}}\right)^{\frac{n}{n-\alpha}} \mathcal{L}^n\left(\mathcal{B}_{r_0}(0)\right) \le \varepsilon \mathcal{L}^n\left(\mathcal{B}_{r_0}(0)\right),
\end{align*}
for all $a_{\varepsilon}$ small enough such that $C^{**} \left({ a_{\varepsilon}}/{a_{\infty}}\right)^{\frac{n}{n-\alpha}} \le \varepsilon$. The second assumption of Lemma~\ref{lem:mainlem} will be showed by contradiction. The existence of two points $\xi_2 \in \Omega_{\varrho}(\xi) \cap \mathcal{W}^c_{\lambda}$ and $\xi_3 \in \mathcal{V}_{\varepsilon, \lambda} \cap B_{\varrho}(\xi)$ leads to 
$${\mathbf{M}}_{\alpha}(|\nabla u|^p)(\xi_2) \le \lambda \ \mbox{ and } \ {\mathbf{M}}_{\alpha}(|\mathcal{E}|^p)(\xi_3) \le a_{\varepsilon} \lambda.$$ 
Lemma~\ref{lem:A2} and Lemma~\ref{lem:B2} ensure that there exist $\delta = \delta(n,\alpha,\varepsilon) \in (0,1)$, $a_{\infty} = a_{\infty}(n,p,\alpha)>1$ and $a_{\varepsilon} = a_{\varepsilon}(n,\alpha,\varepsilon) \in (0,1)$ such that if $\Omega$ is a $(\delta,r_0)$-Reifenberg flat domain satisfying $[\mathbb{A}]_{\tilde{p}}^{r_0} \le \delta$ then
\begin{align*}
\mathcal{L}^n\left(\mathcal{V}_{\varepsilon, \lambda} \cap B_{\varrho}(\xi)\right) & \le \mathcal{L}^n \left(\left\{\zeta \in \Omega_{\varrho}(\xi): \ {\mathbf{M}}_{\alpha}^{\varrho}(\chi_{B_{2\varrho}(\xi)} |\nabla u|^p)(\zeta) > a_{\infty} \lambda \right\} \right) \le \varepsilon \mathcal{L}^n \left(B_{\varrho}(\xi)\right).
\end{align*}
The proof is completed by applying Lemma~\ref{lem:mainlem}.
\end{proof}

\begin{proof}[Proof of Theorem~\ref{theo:main-Rf}]
Theorem~\ref{theo:good-lambda-Rf} show that one can find $a_{\infty}$ and $a_{\varepsilon}$ such that the following inequality
\begin{align*}
& \mathcal{L}^n\left(\{\zeta \in \Omega: \ \mathbf{M}_{\alpha}(|\nabla u|^p)(\zeta)> a_{\infty} \lambda\}\right) \le C\varepsilon \mathcal{L}^n\left(\{\zeta \in \Omega: \ \mathbf{M}_{\alpha}(|\nabla u|^p)(\zeta)>\lambda\}\right)\\ & \hspace{4cm} + \mathcal{L}^n\left(\{\zeta \in \Omega: \ \mathbf{M}_{\alpha}(|\mathcal{E}|^p)(\zeta)> a_{\varepsilon}\lambda\} \right),
\end{align*}
holds for all $\varepsilon >0$. Therefore, for every $0<q<\infty$ and $0<s<\infty$, by  changing of variables we get that
\begin{align}\nonumber
\|\mathbf{M}_{\alpha}(|\nabla u|^p)\|^s_{L^{q,s}(\Omega)} & = a_{\infty}^s q\int_0^\infty{\lambda^s\mathcal{L}^n\left(\{\zeta \in \Omega: \ \mathbf{M}_{\alpha}(|\nabla u|^p)(\zeta)> a_{\infty} \lambda\} \right)^{\frac{s}{q}}\frac{d\lambda}{\lambda}}\\ \nonumber
& \le C a_{\infty}^s \varepsilon^{\frac{s}{q}} q\int_0^\infty{\lambda^s\mathcal{L}^n\left(\{\zeta \in \Omega: \ \mathbf{M}_{\alpha}(|\nabla u|^p)(\zeta)>\lambda\} \right)^{\frac{s}{q}}\frac{d\lambda}{\lambda}}\\ \nonumber
& \hspace{1cm} + C a_{\infty}^s q\int_0^\infty{\lambda^s\mathcal{L}^n\left(\{\zeta \in \Omega: \  \mathbf{M}_{\alpha}(|\mathcal{E}|^p)(\zeta)> a_{\varepsilon} \lambda\} \right)^{\frac{s}{q}}\frac{d\lambda}{\lambda}} \\ \nonumber
&\le C a_{\infty}^s \varepsilon^{\frac{s}{q}} \|\mathbf{M}_{\alpha}\left(|\nabla u|^p \right)\|^s_{L^{q,s}(\Omega)}  +C (a_{\infty}/a_{\varepsilon})^{s} \|\mathbf{M}_{\alpha}(|\mathcal{E}|^p)\|^s_{L^{q,s}(\Omega)}.
\end{align}
Let us fix $\varepsilon$ in the last inequality such that $C a_{\infty}^s \varepsilon^{\frac{s}{q}} \le \frac{1}{2}$ to have~\eqref{eq:regularityMalpha}. It is similar for the case $s = \infty$. 
\end{proof}


\begin{thebibliography}{99}

\bibitem{Phuc2015} K. Adimurthi, N.C. Phuc, {\em Global Lorentz and Lorentz-Morrey estimates below the natural exponent for quasilinear equations}, Calc.  Var. Partial Differential Equations  {\bf 54}(3) (2015),  3107--3139.


\bibitem{Boge2014} V. B\"ogelein, {\em Global Calder\'on-Zygmund theory for nonlinear parabolic systems}, Calc. Var. Partial Differential Equations {\bf 51} (2014), no. 3-4, 555–596.

\bibitem{BDM2011} V. B\"ogelein, F. Duzaar, G. Mingione, {\em Degenerate problems with irregular obstacles}, J. Reine Angew. Math. {\bf 650} (2011), 107–160.

\bibitem{BCDKS} D. Breit, A. Cianchi, L. Diening, T. Kuusi,  S. Schwarzacher, {\em  The $p$-Laplace system with right-hand side in divergence form: inner and up to the boundary pointwise estimates},  Nonlinear Anal. {\bf 153} (2017),  200--212.


\bibitem{BW1} S.-S. Byun, D. K. Palagachev, P. Shin, {\em Global Sobolev regularity for general elliptic equations of p-Laplacian type},  Calc.  Var. Partial Differential Equations  {\bf 57} (2018),  pp 135.

\bibitem{SSB2} S.-S. Byun, S. Ryu, {\em Global weighted estimates for the grdient of solutions to nonlinear elliptic equations},  Ann. Inst. H. Poincar\'e AN {\bf 30} (2013), 291--313.

\bibitem{BW1_1} S.-S. Byun, L. Wang, {\em Elliptic equations with BMO coefficients in Reifenberg domains},  Comm. Pure Appl. Math.  {\bf 57}  (2004), 1283--1310.

\bibitem{SSB3} S.-S. Byun, L. Wang, {\em $L^p$-estimates for general nonlinear elliptic equations}, Indiana Univ. Math. J. {\bf 56}(6) (2007), 3193–-3221.

\bibitem{BW2} S.-S. Byun, L. Wang, {\em Elliptic equations with BMO nonlinearity in Reifenberg domains},  Adv. Math. {\bf 219}(6)  (2008), 1937--1971.

\bibitem{SSB1} S.-S. Byun, L. Wang, {\it Nonlinear gradient estimates for elliptic equations of general type},  Calc.  Var. Partial Differential Equations  {\bf 45}(3-4) (2012), 403--419.

\bibitem{SSB4} S.-S. Byun, L. Wang, S. Zhou, {\em Nonlinear elliptic equations with BMO coefficients in Reifenberg domains}, J. Funct. Anal. {\bf 250}(1) (2007), 167–-196.


\bibitem{CC1995} L.A. Caffarelli, X. Cabr\'e, {\it Fully nonlinear elliptic equations},  American Mathematical Society Colloquium Publications, American Mathematical Society, Providence {\bf 43}(1) (1995), 1--21.


\bibitem{CM2014} A. Cianchi, V.G. Mazya, {\it  Global boundedness of the gradient for a class of nonlinear elliptic systems},  Arch. Ration. Mech. Anal. {\bf 212}(1) (2014),  129--177.

\bibitem{CoMi2016} M. Colombo, G. Mingione, {\it Calder\'on-Zygmund estimates ans non-uniformly elliptic operators},  J. Funct. Anal. {\bf 136}(4) (2016), 1416--1478.

\bibitem{DiBenedetto1983} E. DiBenedetto, {\em $C^{1+\alpha}$ local regularity of weak solutions of degenerate elliptic equations},  Nonlinear Anal. {\bf 7}(8) (1983), 827--850. 


\bibitem{Duzamin2} F. Duzaar and G. Mingione, {\em Gradient estimates via linear and nonlinear potentials}, J. Funt. Anal. {\bf 259} (2010), 2961--2998.

\bibitem{55DuzaMing} F. Duzaar and G. Mingione, {\em Gradient estimates via non-linear potentials},  Amer. J. Math. {\bf 133} (2011), 1093--1149. 

\bibitem{Evans1982} L. Evans, {\em A new proof of local $C^{1,\alpha}$ regularity for solutions of certain degenerate elliptic PDE}, J. Differential Equations {\bf 145} (1982), 356--373.

\bibitem{FT2018} F.D. Fazio, T. Nguyen, {\em Regularity estimates in weighted Morrey spaces for quasilinear elliptic equations}, Rev. Mat. Iberoamericana, (Doi: 10.4171/rmi/1178).

\bibitem{GW} C. G. Gal and M. Warma, {\em Existence of bounded solutions for a class of quasilinear elliptic systems on manifolds with boundary},  J. Differential Equations  {\bf 255}  (2013),  151--192.


\bibitem{Giu} E. Giusti, {\em Direct methods in the calculus of variations}, World Scientic Publishing Co., Inc., River Edge, NJ, 2003.

\bibitem{55Gra} L. Grafakos, {\em Classical and Modern Fourier Analysis}, Pearson/Prentice Hall, 2004.

\bibitem{Gra97} L. Grafakos,  S. Montgomery-Smith, {\em Best constants for uncentred maximal functions},  Bull. London Math. Soc. {\bf 29} (1997) 60–-64.

\bibitem{HOk2019} P. H\"ast\"o, J. Ok, {\em Maximal regularity for local minimizers of non-autonomous functionals},  arXiv:1902.00261,

\bibitem{HKM2006} J. Heinonen, T. Kilpel\"ainen, O. Martio, {\em Nonlinear Potential Theory of Degenerate Elliptic Equations}, Oxford University Press, 1993.

\bibitem{Iwaniec} T. Iwaniec, {\em Projections onto gradient fields and  $L^p$-estimates for degenerated elliptic operators}, Stud. Math. {\bf 75}(3) (1983), 293-–312.


\bibitem{Kilp} T. Kilpel\"ainen, J. Mal\'y, {\em The Wiener test and potential estimates for quasilinear elliptic equations}, Acta Math. {\bf 172} (1994), 137--161.


\bibitem{KZ} J. Kinnunen, S. Zhou, {\em A local estimate for nonlinear equations with discontinuous coefficients}, Commun. Partial Differ. Equ. {\bf 24}(11-12) (1999) 2043--2068.



\bibitem{KM2012} T. Kuusi, G. Mingione, {\em Universal potential estimates}. J. Funct. Anal. {\bf 262}(10) (2012), 4205-–4269.

\bibitem{KM2013} T. Kuusi, G. Mingione, {\em Gradient regularity for nonlinear parabolic equations}, Ann. Sc. Norm. Super. Pisa Cl. Sci. (5) {\bf 12} (2013) 755–822.

\bibitem{KM2014} T. Kuusi, G. Mingione, {\em Guide to nonlinear potential estimates}, Bull. Math. Sci. {\bf 4}(1) (2014), 1--82.

\bibitem{LadyUral1968} O.A. Ladyzhenskaya, N.N. Ural’tseva, {\em Linear and quasilinear elliptic equations}, Academic Press, New York-London, 1968.

\bibitem{Lee2019} M. Lee, J. Ok, {\em Nonlinear Calderón-Zygmund theory involving dual data}, Rev. Mat. Iberoamericana {\bf 35}(4) (2019), 10530–1078.

\bibitem{LM1} A. Lemenant, E. Milakis, L.V. Spinolo, {\it On the extension property of Reifenberg flat domains}, Annales Academiae Scientiarum Fennicae Mathematica {\bf 39} (2014), 51–-71.

\bibitem{Lewis1983} J.L. Lewis, {\em Regularity of the derivatives of solutions to certain degenerate elliptic equations}, Indiana Univ. Math. J. {\bf 32} (1983), 849--858.

\bibitem{Lieberman1984} G.M. Lieberman, {\em Solvability of quasilinear elliptic equations with nonlinear boundary conditions}, J. Functional Analysis {\bf 56}(2) (1984), 210--219.

\bibitem{Lieberman1986} G.M. Lieberman, {\em The Dirichlet problem for quasilinear elliptic equations with continuous differentiable boundary data}, Comm. Partial Differential Equations {\bf 11}(2) (1986), 167--229.

\bibitem{Lieberman1988} G.M. Lieberman, {\em Boundary regularity for solutions of degenerate elliptic equations}, Nonlinear Anal. {\bf 12}(11) (1988), 1203-–1219.


\bibitem{MP11} T. Mengesha, N.C. Phuc, {\it Weighted and regularity estimates for nonlinear equations on Reifenberg flat domains},  J. Differential Equations  {\bf 250}(5)  (2011), 2485--2507.
 
 \bibitem{MP12} T. Mengesha, N.C. Phuc, {\it Global estimates for quasilinear elliptic equations on Reifenberg flat domains},  Arch. Ration. Mech. Anal. {\bf 203}(1)  (2012), 189--216.
 
 
\bibitem{MT2010} E. Milakis, T. Toro, {\it Divergence form operators in Reifenberg flat domains}, Math. Z. {\bf 264}(1) (2010), 15--41.

\bibitem{Min2003} G. Mingione, {\em Bounds for the singular set of solutions to non linear elliptic systems}, Calc. Var. Partial Differential Equations 18 (2003) 373–400.

\bibitem{Mi1} G. Mingione, {\em The Calder\'on-Zygmund theory for elliptic problems with measure data}, Ann. Scuola. Norm. Super. Pisa Cl. Sci. (V) {\bf 6} (2007), 195--261.

\bibitem{Min2007} G. Mingione, {\em The Calder\'on-Zygmund theory for elliptic problems with measure data}, Ann. Sc. Norm. Super. Pisa Cl. Sci. (5) {\bf 6} (2007) 195–261.

\bibitem{Mi3} G. Mingione, {\em Gradient estimates below the duality exponent}, Math. Ann. {\bf 346} (2010), 571--627.

\bibitem{Mi2019} G. Mingione, G. Palatucci,  {\em Developments and perspectives in Nonlinear Potential Theory}, Nonlinear Anal. {\bf 194} (2020), 111452.

\bibitem{MW} B. Muckenhoupt, R. L. Wheeden, {\em Weighted norm inequalities for fractional integrals}, Trans. Amer. Math. Soc. {\bf 192} (1974), 261–274.


\bibitem{55QH4} Q.-H. Nguyen, N.C. Phuc, {\em  Good-$\lambda$ and Muckenhoupt-Wheeden type bounds, with applications to quasilinear elliptic equations with gradient power source terms and measure data}, Math. Ann. {\bf 374}(1-2) (2019), 67-98.


\bibitem{Truyen2018} T. Nguyen, {\em  Boundary regularity for quasilinear elliptic equations with general Dirichlet boundary data}, arXiv:1811.03947v1.
     
\bibitem{Truyen2016} T. Nguyen, T. Phan, {\em Interior gradient estimates for quasilinear elliptic equations}, Calc.  Var. Partial Differential Equations  {\bf 55} (2016), no. 3, Art. 59, 33pp.

\bibitem{Tuoc2018} T. Phan, {\em Regularity estimates for BMO-weak solutions of quasilinear elliptic equations with inhomogeneous boundary conditions}, T. Nonlinear Differ. Equ. Appl. {\bf 25}(8) (2018), https://doi.org/10.1007/s00030-018-0501-2.

\bibitem{Phuc1} N. C. Phuc, {\em  Global integral gradient bounds for quasilinear equations below or near the natural exponent}, Arkiv for Matematik {\bf 52} (2014), 329--354.

\bibitem{Phuc2} N. C. Phuc, {\em Morrey global bounds and quasilinear Riccati type equations below the natural exponent}, J. Math. Pures et Appliquées. {\bf 102} (2014), 99-123.

 
\bibitem{Tolksdorf1984} P. Tolksdorff, {\em Regularity for a more general class of quasilinear elliptic equations},  J. Differential Equations  {\bf 51}(1) (1984),   126--150. 

\bibitem{MPT2018} M.-P. Tran, {\em Good-$\lambda$ type bounds of quasilinear elliptic equations for the singular case},  Nonlinear Analysis {\bf 178} (2019), 266-281.

\bibitem{PNCRM} M.-P. Tran, T.-N. Nguyen, {\em Generalized good-$\lambda$ techniques and applications to weighted Lorentz regularity for quasilinear elliptic equations}, Comptes Rendus Mathematique {\bf 357}(8) (2019), 664-670.

\bibitem{PNJDE} M.-P. Tran, T.-N. Nguyen, {\em New gradient estimates for solutions to quasilinear divergence form elliptic equations with general Dirichlet boundary data}, J. Diff. Equ. {\bf 268}(4) (2020), 1427-1462.

\bibitem{PNCCM} M.-P. Tran, T.-N. Nguyen, {\em Lorentz-Morrey global bounds for singular quasilinear elliptic equations with measure data}, Communications in Contemporary Mathematics (2019). https://doi.org/10.1142/S0219199719500330.

\bibitem{MPTNsub} M.-P. Tran, T.-N. Nguyen, {\em Weighted Lorentz gradient and point-wise estimates for solutions to quasilinear divergence form elliptic equations with an application}, arXiv:1907.01434.

\bibitem{PNnonuniform} M.-P. Tran, T.-N. Nguyen, {\em Global Lorentz estimates for non-uniformly nonlinear elliptic equations via fractional maximal operators}, arXiv:2002.00318.

\bibitem{Toro1997} T. Toro, {\it Doubling and flatness: Geometry of measures}, Notices Amer. Math. Soc. (1997), 1087--1094.

\bibitem{Uhlenbeck1977} K. Uhlenbeck, {\em Regularity for a class of nonlinear elliptic systemss}, Acta. Math. {\bf 138}(3-4) (1977), 219--240.

\bibitem{Ural1968} N. Ural\'tzeva, {\em Degenerate quasilinear elliptic systems (Russian)}, Zap. Naucm. Sem. Leningrad. Otdel. Mat. Inst. Steklov. (LOMI) {\bf 7} (1968), 184--222.

\bibitem{Vitali08} G. Vitali, {\it Sui gruppi di punti e sulle funzioni di variabili reali}, Atti Accad. Sci. Torino {\bf 43} (1908), 229--246.


\bibitem{Wiener} N. Wiener, {\em The Dirichlet problem},  J. Math. Massachussetts, {\bf 3} (1924), 127-146.
\end{thebibliography}
\end{document}